\setlist[enumerate]{label=\emph{(\roman*)}}
\newtheorem{theorem}{Theorem}
\newtheorem{lemma}{Lemma}
\newtheorem{proposition}{Proposition}
\newtheorem{claim}{Claim}
\theoremstyle{remark}
\newtheorem{remark}{Remark}
\newcommand\RR{\mathbb{R}}
\newcommand\loc{\rho}
\newcommand\YYp{\boldsymbol{Y}_{\hspace{-0.3em}+}}
\newcommand\ZZp{\boldsymbol{Z}_{\hspace{-0.1em}+}}
\newcommand\YYm{\boldsymbol{Y}_{\hspace{-0.3em}-}}
\newcommand\YYpm{\boldsymbol{Y}_{\hspace{-0.3em}\pm}}
\newcommand\ZZpm{\boldsymbol{Z}_{\hspace{-0.1em}\pm}}
\newcommand\ee{\boldsymbol{\varepsilon}}
\newcommand\uu{\boldsymbol{u}}
\newcommand\pp{\boldsymbol{\phi}}
\DeclareMathOperator\sech{sech}
\DeclareMathOperator\sgn{sgn}
\title[Soliton dynamics for 1D NLKG]{Soliton dynamics for the 1D NLKG equation with symmetry and in the absence of internal modes}
\author{Micha{\l} Kowalczyk}
\address{Departamento de Ingenier\'{\i}a Matem\'atica and Centro
de Modelamiento Matem\'atico (UMI 2807 CNRS), Universidad de Chile, Casilla
170 Correo 3, Santiago, Chile.}
\email {kowalczy@dim.uchile.cl}
\author{Yvan Martel}
\address{CMLS, \'Ecole polytechnique, CNRS, 91128 Palaiseau Cedex, France}
\email{yvan.martel@polytechnique.edu}
\author{Claudio Mu\~noz}
\address{CNRS and Departamento de Ingenier\'{\i}a Matem\'atica and Centro
de Modelamiento Matem\'a\-tico (UMI 2807 CNRS), Universidad de Chile, Casilla
170 Correo 3, Santiago, Chile.}
\email{cmunoz@dim.uchile.cl}
\thanks{M.K. was partially funded by Chilean research grants FONDECYT 1170164.
C.M. was partially funded by Chilean research grants FONDECYT 1150202.
M.K. and C.M. were partially funded by project France-Chile ECOS-Sud C18E06 and CMM Conicyt PIA AFB170001.
Part of this work was done while C.M. and M.K. were visiting the CMLS at \'Ecole Polytechnique, France.
Part of this work was done while C.M. was visiting the Departamento de Matem\'aticas Aplicadas de Granada, UGR, Spain.}
\subjclass[2010]{35L71 (primary), 35B40, 37K40}
\begin{document}

\begin{abstract}
We consider the dynamics of even solutions of the one-dimensional nonlinear Klein-Gordon equation
$\partial_t^2 \phi - \partial_x^2 \phi + \phi - |\phi|^{2\alpha} \phi =0$ for $\alpha>1$,
in the vicinity of the unstable soliton $Q$.
Our main result is that stability in the energy space $H^1(\RR)\times L^2(\RR)$ implies asymptotics stability in a local energy norm.
In particular, there exists a Lipschitz graph of initial data leading to stable and asymptotically stable trajectories.

The condition $\alpha>1$ corresponds to cases where the linearized operator around $Q$ has no resonance and no internal mode.
Recall that the case $\alpha>2$ is treated in~\cite{KNS} using Strichartz and other local dispersive estimates. Since these tools are not 
 available for low power nonlinearities, our approach is based on virial type estimates and the particular structure of the linearized operator observed in~\cite{CGNT}.
\end{abstract}

\maketitle

\section{Introduction}
\subsection{Main results}
Consider the one-dimensional focusing nonlinear Klein-Gordon equation
\begin{equation}\label{nlkg}
\partial_t^2 \phi - \partial_x^2 \phi + \phi - f(\phi) =0 ,\quad (t,x)\in \RR\times \RR,
\quad f(\phi) = |\phi|^{2\alpha} \phi,
\end{equation}
where $\alpha>0$. This equation also rewrites as a first order system in time for the function $\pp=(\phi,\partial_t \phi)=(\phi_1,\phi_2)$,
\begin{equation*}
\left\{\begin{aligned}
& \dot \phi_1 = \phi_2 \\
& \dot \phi_2 = \partial_x^2 \phi_1 - \phi_1 + f(\phi_1) . 
\end{aligned}\right.\end{equation*}
Let $F(\phi)=\int_0^\phi f(s)ds=\frac{1}{2\alpha+2}|\phi|^{2\alpha+2}$. Note that~\eqref{nlkg} is Hamiltonian. The conservation of energy of a solution $(\phi,\partial_t\phi)$ of~\eqref{nlkg} writes
\begin{equation}\label{eq:energy}
E(\phi,\partial_t\phi)=
\frac 12 \int \left\{(\partial_t \phi)^2 + (\partial_x \phi)^2 + \phi^2 - 2 F(\phi)\right\}
=E(\phi(0),\partial_t\phi(0)).
\end{equation}
For initial data in the energy space $H^1\times L^2$, local well-posedness, as well as global well-posedness for small solutions, is well-known (see for example~\cite{CaHa}, Theorem~6.2.2 and Proposition~6.3.3).

\smallskip

Denote by $Q$ the standing wave solution of~\eqref{nlkg}, also called \emph{soliton}, explicitly given by
\begin{equation*}
Q(x)=\frac {(\alpha+1)^{\frac 1{2\alpha}}}{\cosh^{\frac 1\alpha}(\alpha x)},\qquad Q''-Q+Q^{2\alpha +1}=0
\quad \mbox{on $\RR$.}
\end{equation*}
The linearized operator $L$ around $Q$ writes
\begin{equation}\label{def:L}
L =-\partial_x^2 + 1 - (2\alpha+1) Q^{2\alpha}
= -\partial_x^2 + 1 - \frac {(2\alpha+1)(\alpha+1)}{\cosh^2(\alpha x)}.
\end{equation}
For any $\alpha>0$, the first eigenvalue of $L$ is $\lambda_0=-\alpha(\alpha+2)=-\nu_0^2$ ($\nu_0>0$) with corresponding normalized eigenfunction
\begin{equation}\label{eq:Y0}
Y_0(x)=c_0 \left(\cosh(\alpha x)\right)^{-(1+\frac 1\alpha)},\quad 
\langle Y_0,Y_0\rangle =1,\quad
LY_0=-\nu_0^2 Y_0
\end{equation}
(we denote $\langle A,B\rangle=\int A\cdot B$). The second eigenvalue of $L$ is $0$ with eigenfunction $Y_1=c_1Q'$.
In the case $\alpha>1$, there is no other eigenvalue in $[0,1)$, which means that there is no \emph{internal mode}
for the model
(see Section~\ref{S.1.3}).

Let
\begin{equation*}
\YYpm=\left(\begin{array}{c}Y_0 \\ \pm \nu_0 Y_0\end{array}\right),\quad
\ZZpm=\left(\begin{array}{c} Y_0 \\ \pm \nu_0^{-1} Y_0\end{array}\right).
\end{equation*}
The functions $\uu_{\pm}(t,x)=e^{\pm \nu_0 t} \YYpm(x)$ are solutions of the linearized problem
\begin{equation}\label{lin}\left\{\begin{aligned}
& \dot u_1 = u_2 \\
& \dot u_2 = - L u_2 
\end{aligned}\right.\end{equation}
illustrating the presence of exponentially stable and unstable modes both relevant in the dynamics of solutions in the vicinity of a soliton.

\smallskip

In this paper, by \emph{global solution of}~\eqref{nlkg}, we mean a function $\pp\in \mathcal C([0,\infty),H^1\times L^2)$ satisfying
\eqref{nlkg} for all $t\geq 0$. We only consider solutions with even symmetry.

\smallskip

Our main result is the following \emph{conditional asymptotic stability theorem}.
\begin{theorem}\label{th:1}
Let $\alpha>1$. There exists $\delta>0$ such that if a global even solution $\pp=(\phi,\partial_t \phi)$ of~\eqref{nlkg} satisfies
\begin{equation}\label{th:1:stab}
\mbox{for all $t\geq 0$,}\quad \|\pp(t)-(Q,0)\|_{H^1(\RR)\times L^2(\RR)}< \delta
\end{equation}
then, for any interval $I$ of $\RR$,
\begin{equation}\label{th:1:asymp}
\lim_{t\to +\infty} \|\pp(t)- (Q,0)\|_{H^1(I)\times L^2(I)}=0.
\end{equation}
\end{theorem}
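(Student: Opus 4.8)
The plan is to adapt the virial method developed for kink stability to the soliton $Q$ and to the first order system for $\pp=(\phi_1,\phi_2)$. Since only even data are considered, the odd zero mode $Y_1=c_1Q'$ of $L$ is irrelevant and no translation parameter is needed. First I would set $\phi_1=Q+u$, $\phi_2=v$ (so $v=\partial_t u$), which gives $\dot u=v$, $\dot v=-Lu+N(u)$ with $N(u)=f(Q+u)-f(Q)-f'(Q)u=O(|u|^{\min(3,1+2\alpha)})$; note $\alpha>1$ makes $f\in C^3$, which is enough for the expansions below. Then decompose $\uu=(u,v)=a_+\YYp+a_-\YYm+\uu^\perp$, with $a_\pm$ the coordinates of the unstable/stable directions $e^{\pm\nu_0 t}\YYpm$ and $\uu^\perp$ in the symplectically orthogonal subspace on which the linearized energy $\tfrac12\langle Lu,u\rangle+\tfrac12\|v\|^2$ is coercive. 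Expanding the conserved energy $E$ at $(Q,0)$ and using this coercivity gives $\|\uu^\perp(t)\|_{H^1\times L^2}^2\lesssim\delta^2+a_+^2(t)+a_-^2(t)$; together with $|a_\pm(t)|\lesssim\delta$ from~\eqref{th:1:stab}, the whole remainder is $O(\delta)$ in the energy space for all $t\ge0$. One point will be decisive later: the equations for $a_\pm$ read $\dot a_\pm=\pm\nu_0 a_\pm+\langle N(u)+(\text{quadratic terms}),Y_0\rangle$, so their right-hand errors are \emph{spatially localized} (paired against the exponentially decaying $Y_0$) and thus controlled by a local norm of $\uu$.

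The core of the argument is a pair of virial estimates. I would first apply to $\uu$ a virial functional $\mathcal I_1$ for the Klein–Gordon system, built from a momentum density $v\,\partial_x u$ and an energy-type localized piece $uv$ with appropriate signs, weighted by a bounded odd function $\psi_A$ equal to $x$ on $|x|\lesssim A$ and saturating for $|x|\gg A$ (e.g. $\psi_A(x)=A\tanh(x/A)$). A careful computation of $\tfrac{d}{dt}\mathcal I_1$, with integrations by parts, produces the good term $-c\int\psi_A'\big[(\partial_x u)^2+v^2\big]$, a harmless remainder with coefficients $O(A^{-1})$ or $O(A^{-3})$, the nonlinear error $\sim\int|N(u)|\,|\partial_x u|$, and — crucially — a \emph{wrong-sign} term $\sim\int(-x V')\,u^2$ coming from the attractive potential $V=(2\alpha+1)Q^{2\alpha}$ in $L$, which is localized near $x=0$ since $-xV'\ge0$ decays exponentially. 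To absorb it I would pass to the transformed variable $w=\mathcal T u$ furnished by the structure of $L$ observed in~\cite{CGNT}: an intertwining operator with $\mathcal TL=\widetilde L\,\mathcal T$, where $\widetilde L=-\partial_x^2+1-\widetilde V$ is a Klein–Gordon operator whose potential $\widetilde V$ no longer produces a wrong-sign virial term, and which is bounded with bounded inverse on the subspace carrying $\uu^\perp$ — this is exactly where the absence of an internal mode in $(0,1)$ and of a threshold resonance, valid precisely for $\alpha>1$, enters. A second virial $\mathcal I_2$ applied to $w$, which solves a Klein–Gordon equation with a controllable source (essentially $\mathcal T N(u)$ and commutator terms), yields $\tfrac{d}{dt}\mathcal I_2\le -c\int\psi_A'\,w^2+(\text{source and error terms})$, and $\int\psi_A'\,w^2$ controls $\int\psi_A'\,u^2$ up to lower-order commutator terms and an exponentially localized contribution of $a_\pm$. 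Combining $\mathcal I_1$ and $\epsilon\mathcal I_2$ for small $\epsilon$, using $|\mathcal I_j(t)|\lesssim\delta^2$ uniformly in $t$, the quadratic smallness of $N(u)$, and the localized control of $a_\pm$, I expect all error terms to be absorbed and, after integration in time, to reach
\begin{equation*}
\int_0^{+\infty}\!\!\int \psi_A'(x)\big[(\partial_x u)^2+u^2+v^2\big](t,x)\,dx\,dt\;\lesssim\;\delta^2 .
\end{equation*}
The main obstacle is exactly this step: building $\mathcal T$ and tuning the weights and the constants so that the coupling of the two virials, the attractive potential, the mode-interaction terms, and the low-regularity nonlinearity are all controlled; this is where the hypothesis $\alpha>1$ is genuinely used.

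Finally I would upgrade this space–time bound to the pointwise-in-time conclusion~\eqref{th:1:asymp}. Given a bounded interval $I$, choose $A$ large enough that $\psi_A'\ge c>0$ on $I$; then $\int_0^{+\infty}\|\uu(t)\|_{H^1(I)\times L^2(I)}^2\,dt<\infty$ (the $a_\pm\YYpm$ contributions being exponentially localized and harmless). On the other hand, from the system for $\uu$ and the uniform bound $\|\uu(t)\|_{H^1\times L^2}\lesssim\delta$, the function $t\mapsto\int\chi_I\big[u^2+(\partial_x u)^2+v^2\big]$, with $\chi_I$ a smooth cutoff adapted to $I$, is uniformly Lipschitz. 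A non-negative uniformly Lipschitz function with finite integral on $[0,+\infty)$ tends to $0$, so $\|\uu(t)\|_{H^1(I)\times L^2(I)}\to0$ as $t\to+\infty$. Since the errors in the $a_\pm$ equations are controlled by a local norm of $\uu$, they tend to $0$; integrating the stable equation forward and the unstable one backward from $+\infty$ (using boundedness of $a_+$ on $[0,+\infty)$) gives $a_\pm(t)\to0$. Hence $\|\pp(t)-(Q,0)\|_{H^1(I)\times L^2(I)}\to0$ for every bounded interval $I$, which is~\eqref{th:1:asymp}.
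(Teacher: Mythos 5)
Your strategy coincides with the paper's: decompose along $Y_0$ into stable/unstable amplitudes plus an orthogonal remainder, run a first localized virial on $u$, conjugate by the Darboux-type factorization of \cite{CGNT} (namely $SUL=L_0SU$, with $L_0$ having a repulsive potential precisely when $\alpha>1$) so that a second virial has no wrong-sign potential term, close the loop with a coercivity estimate recovering $u$ locally from the transformed variable, integrate in time, and upgrade the space--time bound to pointwise decay of the local norm and of the mode amplitudes. The step you explicitly flag as the main obstacle is indeed where the paper's work lies --- in particular the insertion of $(1-\gamma\partial_x^2)^{-1}$ to compensate the two derivatives lost by the second-order operator $SU$, the localization $\chi_B$ at an intermediate scale, and the use of the orthogonality to $Y_0$ and (by evenness) to $Q'$ to invert the transformation in a local norm --- but your outline is consistent with how that step is actually carried out.
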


For the sake of completeness, we provide a description of the set of initial data leading to global solutions
satisfying the stability assumption~\eqref{th:1:stab} (see also Theorem~4.1 in~\cite{BJ}).

For $\delta_0>0$, let
\begin{equation}\label{def:A_delta}
\mathcal A_0=
\left\{\ee\in H^1(\RR)\times L^2(\RR) \mbox{ such that $\ee$ is even, } \|\ee\,\|_{H^1\times L^2}< \delta_0 \mbox{ and }
\langle \ee , \ZZp\rangle=0\right\}.
\end{equation}

\begin{theorem}\label{th:2}
Let $\alpha>1$.
There exist $C, \delta_0>0$ and a Lipschitz function
$
h\colon\mathcal A_0 \to \RR
$
with $
h(0)=0$ and $|h(\ee)|\leq C \|\ee\|_{H^1\times L^2}^{3/2}$
 such that denoting
\begin{equation*}
 \mathcal M=\left\{(Q,0)+\ee+h(\ee) \YYp \mbox{ with } \ee\in \mathcal A_0\right\}
\end{equation*}
the following holds
\begin{enumerate}
\item If $\pp_0\in \mathcal M$ then the solution $\pp$ of~\eqref{nlkg} with initial data $\pp_0$ 
is global and satisfies, for all $t\geq 0$,
\begin{equation}\label{prop:1:stab}
 \|\pp(t)-(Q,0)\|_{H^1(\RR)\times L^2(\RR)}\leq C\|\pp_0-(Q,0)\|_{H^1(\RR)\times L^2(\RR)}.
\end{equation}
\item If a global even solution $\pp$ of~\eqref{nlkg} satisfies, for all $t\geq 0$,
\begin{equation*}
\|\pp(t)-(Q,0)\|_{H^1(\RR)\times L^2(\RR)}< \tfrac 12 \delta_0,
\end{equation*}
 then for all $t\geq 0$, $\pp(t)\in \mathcal M$.
\end{enumerate}
\end{theorem}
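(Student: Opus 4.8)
The plan is the classical construction of a center--stable manifold for the soliton, made quantitative in the energy space. In the even sector the translation mode $c_1Q'$ of $L$ is odd, hence absent, so near $(Q,0)$ the linear flow~\eqref{lin} has exactly one unstable direction $\YYp$ and one stable direction $\YYm$, and $\mathcal M$ will be the graph $a_+=h(\ee)$ over the ``stable $+$ dispersive'' data. For a solution $\pp$ with $\|\pp(t)-(Q,0)\|_{H^1\times L^2}\leq\delta$ on $[0,T]$, set $\vec\epsilon=\pp-(Q,0)$ and
\[
\vec\epsilon(t)=a_+(t)\,\YYp+a_-(t)\,\YYm+\vec v(t),\qquad a_\pm(t):=\tfrac12\langle\vec\epsilon(t),\ZZpm\rangle,
\]
so that $\langle v_1(t),Y_0\rangle=\langle v_2(t),Y_0\rangle=0$ (and $\langle v_1(t),Q'\rangle=0$ by evenness). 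Inserting this into~\eqref{nlkg} and using that $\YYpm$ generate the modes $e^{\pm\nu_0 t}\YYpm$, one gets the coupled system
\[
\dot a_\pm=\pm\nu_0\,a_\pm+m_\pm(t),\qquad \partial_t\vec v=J\mathcal L\,\vec v+\vec r(t),
\]
with $\mathcal L=\mathrm{diag}(L,1)$, $J$ the symplectic matrix, and $m_\pm,\vec r$ built from $\mathcal N(\varepsilon_1):=f(Q+\varepsilon_1)-f(Q)-f'(Q)\varepsilon_1$. Since $\mathcal N$ is quadratic in $\varepsilon_1$ and $H^1(\RR)\hookrightarrow L^\infty(\RR)$, one has $|m_\pm(t)|+\|\vec r(t)\|_{H^1\times L^2}\lesssim\|\vec\epsilon(t)\|_{H^1\times L^2}^2$.

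Two a priori ingredients then do the work. First, because $\alpha>1$ the spectrum of $L$ below its continuous spectrum $[1,\infty)$ reduces to $\{-\nu_0^2,0\}$, so on even functions orthogonal to $Y_0$ one has $\langle Lw,w\rangle\gtrsim\|w\|_{H^1}^2$, whence $\mathcal H(\vec v):=\|v_2\|_{L^2}^2+\langle Lv_1,v_1\rangle$ is coercive, $\mathcal H(\vec v)\gtrsim\|\vec v\|_{H^1\times L^2}^2$. Second, expanding the conserved energy~\eqref{eq:energy} about $(Q,0)$, using the orthogonalities above and the fact that the quadratic form $\|u_2\|_{L^2}^2+\langle Lu_1,u_1\rangle$ vanishes at $\YYpm$ (immediate from $LY_0=-\nu_0^2Y_0$), one obtains for $t\in[0,T]$
\[
E(\pp)-E(Q,0)=-2\nu_0^2\,a_+(t)\,a_-(t)+\tfrac12\mathcal H(\vec v(t))+O\big(\|\vec\epsilon(t)\|_{H^1\times L^2}^3\big),
\]
the left side being constant in $t$. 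Combined with $|a_-(t)|\lesssim|a_-(0)|+\delta\sup_{[0,t]}\|\vec\epsilon\|$ (integrate the $a_-$-equation forward), this yields
\[
\|\vec v(t)\|_{H^1\times L^2}^2\lesssim\|\vec\epsilon(0)\|_{H^1\times L^2}^2+\sup_{[0,t]}|a_+|\cdot\sup_{[0,t]}\|\vec\epsilon\|+\delta\sup_{[0,t]}\|\vec\epsilon\|^2,
\]
so that, as long as $\sup_{[0,t]}|a_+|\lesssim\|\vec\epsilon(0)\|$, a continuity argument gives $\|\vec\epsilon(t)\|_{H^1\times L^2}\lesssim\|\vec\epsilon(0)\|_{H^1\times L^2}$. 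Everything is thereby reduced to the scalar unstable mode $a_+$.

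To build $h$, fix $\ee\in\mathcal A_0$: then $a_-(0)=\tfrac12\langle\ee,\ZZm\rangle$ and $\vec v(0)=\ee-a_-(0)\YYm$ are determined, while $a_+(0)$ is free since $\langle\ee,\ZZp\rangle=0$. For each candidate $a_+(0)=h\in[-C_0\|\ee\|,C_0\|\ee\|]$ (with $C_0$ large, $\|\ee\|$ small) the above keeps the solution inside the $\delta$-ball, with $|a_+|\leq C_0\|\ee\|$, until --- if ever --- $a_+$ reaches $\pm C_0\|\ee\|$; and once $|a_+|\sim\|\ee\|$ the linear term dominates $m_+$, so $a_+$ is strictly outgoing there. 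Hence the sets of $h$ for which the trajectory exits upward, resp.\ downward, are open, disjoint, and contain the respective endpoints, so by connectedness of the interval there is a value $h=h(\ee)$ for which the trajectory never exits; it is then global and satisfies~\eqref{prop:1:stab} (global existence from local well-posedness and the uniform bound; $\|\pp_0-(Q,0)\|\sim\|\ee\|$ because $|h(\ee)|\ll\|\ee\|$). For that trajectory, boundedness of $a_+$ forces the backward-Duhamel representation $a_+(0)=-\int_0^{\infty}e^{-\nu_0 s}m_+(s)\,ds$, whence $|h(\ee)|\lesssim\sup_t\|\vec\epsilon(t)\|^2\lesssim\|\ee\|^2$, in particular $\leq C\|\ee\|^{3/2}$, and $h(0)=0$. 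Uniqueness of $h(\ee)$ and its Lipschitz dependence on $\ee$ follow by running the same estimates on the difference $w$ of two such global trajectories: its unstable component satisfies $b_+(0)=-\int_0^{\infty}e^{-\nu_0 s}O(\delta\|w(s)\|)\,ds$ while the energy--coercivity bound gives $\sup_t\|w(t)\|\lesssim|b_+(0)|+\|\Delta\ee\|+\delta\sup_t\|w(t)\|$ ($\Delta\ee$ the difference of the data), so $|b_+(0)|\lesssim\delta\|\Delta\ee\|$, i.e.\ $|h(\ee)-h(\ee')|\lesssim\|\ee-\ee'\|$. Finally, for (ii): a global solution staying $\tfrac12\delta_0$-close cannot have $a_+(t_0)$ different from the backward integral (else $a_+\sim c\,e^{\nu_0 t}$ would blow up); setting $\ee:=\pp(t_0)-(Q,0)-a_+(t_0)\YYp\in\mathcal A_0$, the uniqueness just proved gives $a_+(t_0)=h(\ee)$, i.e.\ $\pp(t_0)\in\mathcal M$ for every $t_0\geq0$.

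The step I expect to be the genuine obstacle is the uniform-in-time control of the neutral component $\vec v$. In the high-power case of~\cite{KNS} this comes from dispersive (Strichartz) decay, but for $\alpha>1$ no such decay is available in the energy space, so the Duhamel term $\int_0^t e^{(t-s)J\mathcal L}\vec r(s)\,ds$ cannot be bounded on the whole half-line; the control of $\|\vec v(t)\|$ has to be squeezed out of the exact conservation of the \emph{nonlinear} energy together with the coercivity of $\mathcal H$ --- which is exactly where the hypothesis $\alpha>1$ (absence of internal mode) enters. The remaining points --- closing the bootstrap constants, the transversality behind the openness of the exit sets, and the limited smoothness of $f(\phi)=|\phi|^{2\alpha}\phi$ near $0$ (which matters in the tail region and in the difference estimates) --- are technical, and are also why a non-sharp exponent such as $3/2$ in the bound on $h$ is comfortable to state.
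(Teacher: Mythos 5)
Your overall strategy coincides with the paper's for the construction itself: the a priori control of the neutral component comes from conservation of the nonlinear energy combined with the coercivity of $\langle L\cdot,\cdot\rangle$ on even functions orthogonal to $Y_0$ (the paper's estimate~\eqref{for:u}), the stable mode is integrated forward, the unstable mode is fixed by a shooting/connectedness argument whose openness rests on the outgoing transversality at the exit threshold, and the bound $|h(\ee)|\lesssim\|\ee\|^2$ via the backward Duhamel formula is consistent with the paper's $|b_+(0)|\leq K^5\delta_0^2$. You also correctly identify that the whole point is to replace dispersive decay by energy conservation plus coercivity, which is where $\alpha>1$ enters.

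The genuine gap is in the uniqueness/Lipschitz step. For the difference $w$ of two trajectories there is no conserved energy, so ``running the same estimates'' does not produce the inequality $\sup_t\|w(t)\|\lesssim|b_+(0)|+\|\Delta\ee\|+\delta\sup_t\|w(t)\|$ that you assert. What the equations actually give is $\bigl|\tfrac{d}{dt}\beta_c\bigr|\leq K\delta(\beta_c+\beta_++\beta_-)$ for the quadratic form $\beta_c=\langle L\check u_1,\check u_1\rangle+\|\check u_2\|^2$ of the difference; integrating this on $[0,t]$ yields a bound growing in $t$ (linearly, or exponentially via Gronwall), not a uniform one, and the weighted-norm trick used for genuinely stable manifolds is unavailable because differences of trajectories on a center-stable manifold need not decay. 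The paper closes this step by a contradiction argument: assuming $K\delta(\beta_c(0)+\beta_+(0)+\beta_-(0))<\tfrac{\nu_0}{10}\beta_+(0)$, it bootstraps $K\delta(\beta_c+\beta_++\beta_-)\leq\nu_0\beta_+$, under which $\dot\beta_+\geq\nu_0\beta_+$ grows exponentially while $\dot\beta_c\leq\nu_0\beta_+\leq\dot\beta_+$, so that $\beta_c(t)\leq\beta_c(0)+\beta_+(t)$ stays dominated by $\beta_+$; the resulting exponential growth of $\check b_+$ contradicts the boundedness of both solutions. The negation of the hypothesis is exactly~\eqref{dolla}, namely $|b_+(0)-\tilde b_+(0)|\leq C\delta^{1/2}\|\ee-\tilde\ee\|$ --- which, incidentally, is where the exponent $3/2$ in $|h(\ee)|\leq C\|\ee\|^{3/2}$ comes from, not from the limited smoothness of $f$. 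You need to supply an argument of this type; the difference estimate as you wrote it does not close.
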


\subsection{Related results and comments on the proof}
First, we comment on two articles devoted to soliton dynamics for the one-dimensional nonlinear Klein-Gordon equation~\eqref{nlkg}.

Using techniques
based on Strichartz and other local dispersive estimates, 
Krieger et al.~\cite{KNS} have completely treated the case $\alpha >2$ in the case of even data.
Indeed, they classify all solutions whose energy does not exceed too much that of the ground state~$Q$.
This includes the construction, by the fixed point argument,  of 
a $\mathcal C^1$ center-stable manifold around the soliton  and the proof  of  asymptotic stability and scattering (linear behavior) around the ground state for solutions on the manifold. The method seems limited to $\alpha\geq 2$ because of the use of Strichartz estimates to control the nonlinear term, see comment in Section 3.4 of~\cite{KNS}.

By formal and numerical methods, Bizo\'n et al.~\cite{BCS} have shown that for even solutions trapped by the soliton,
the convergence rate to $Q$ heavily depends on the power $\alpha$ of the nonlinearity.
In the $L^\infty$ sense, they conjecture the following trichotomy:
(a) fast dispersive decay for $\alpha >1$; (b) slow decay for $\alpha=1$; (c) very slow decay for $0<\alpha<1$.
The threshold value $\alpha=1$ corresponds to the emergence of a resonance at the linear level, while $\alpha<1$ leads to 
one or several internal modes (see Section~\ref{S.1.3}).
Following these observations, unifying the case $\alpha>1$ was the main motivation of the present work.

Our method does not give an explicit decay rate as $t\to +\infty$, but 
we notice as a by-product of the proof of Theorem~\ref{th:1} that, for any interval $I$ of $\RR$, it holds
\begin{equation}\label{convergence}
\int_0^{+\infty} \|\pp(t)-(Q,0)\|_{H^1(I)\times L^2(I)}^2 dt <\infty.
\end{equation}
This is to be compared with the results obtained in~\cite{KMM} on the (local) asymptotic stability of the kink for the $\phi^4$ model 
under small odd perturbations. Indeed, in the latter case, the presence of an internal mode leads to a lower convergence rate since the component $z(t)$ of the solution along the internal mode only satisfies the weaker
estimate $\int_0^{+\infty} |z(t)|^4 dt <\infty$
(see Theorem~1.2 in~\cite{KMM}). 
Although we do not claim optimality of such results, in the case of~\eqref{nlkg} with $0<\alpha\leq 1$, we do not expect estimates such as in~\eqref{convergence} to hold.

The proof of Theorem~\ref{th:1} is mainly based on localized virial type arguments similar to that used in~\cite{KMM,MM1,MaMeRa}, for example. Unlike in these works, we avoid numerical computations of certain constants related to the coercivity of the virial functional by using factorization properties of the linearized operator described in~\cite{CGNT} (see also references~\cite{MS,SuSu}, cited in~\cite{CGNT}).
A formal presentation of this approach is given in~Section~\ref{S:heuristic}.
We point out that the same structure was crucially used in the construction of blow-up solutions for the wave maps, Yang-Mills
and $O(3)$ $\sigma$-models in~\cite{RR,RS}. Note that in the present paper, we compensate the loss of two derivatives due to the change of variables to still work in the energy space.

We refer to~\cite{BC,KK1,KK2,KoMaMu1,KoMaMu2,LS,Ste,SW} for various results of asymptotic stability for the nonlinear Klein-Gordon equation and $\phi^4$ equation or variants of these models.

Several other conditional asymptotic stability results or classifications in a neighborhood of the ground state for the nonlinear Klein-Gordon in higher dimensions and for the nonlinear Schr\"odinger equation were also obtained in~\cite{Cc,CPe,NS1,Sc09}, for example.
We also mention~\cite{KS} where for the mass supercritical Schr\"odinger equation in one dimension, a finite co-dimensional manifold of initial data trapped by the soliton was constructed.

Concerning the generalized Korteweg-de Vries equation and related models, studies of the dynamics of the solutions close to the soliton are presented in~\cite{CMPS,GS2,KM,Ma,MaMe,MaMeRa,MaMeNaRa,PW2}, in blow-up contexts or for bounded solutions. Note that the method introduced in~\cite{Ma,MaMe}, using the special structure of a transformed linearized problem, also has some analogy with our proof.

For global existence results in the case of semilinear and quasilinear wave equations, we refer to~\cite{Delort,DeF}.

Finally,  we refer to~\cite{BJ,BLZ} and references therein for refined descriptions of dynamics of solutions in various settings.

\subsection{Resonances and internal modes}\label{S.1.3}
As mentioned before, the absence of any other eigenvalue in $[0,1)$ for the operator $L$ when $\alpha>1$ is important in our proof.
For $0<\alpha\leq 1$, we continue the description of the spectrum of $L$.
For $\alpha=1$, there is an even resonance at $1$. For any $0<\alpha<1$, there is a third eigenvalue associated to an even eigenfunction
\begin{equation*}
Y_2(x)=c_2Y_0(x) \left( 1-\frac 2\alpha \sinh^2(\alpha x)\right),\quad \lambda_2=\alpha(2-\alpha),\quad
\nu_2=\lambda_2^{\frac 12}.
\end{equation*}
In particular, for any $0<\alpha<1$, the function
\begin{equation*}
\uu(t)= (\cos(\nu_2 t ) Y_2,-\nu_2 \sin(\gamma_2 t ) Y_2)
\end{equation*}
is solution of~\eqref{lin}.
These solutions are typical of the notion of \emph{internal modes} and show that asymptotic stability (even up to the exponential instable mode) cannot be true at the linear level for such value of $\alpha$.
An important issue is the nature of the interaction of such internal mode with the nonlinearity.
We recall that such an internal mode was treated in the context of the $\phi^4$ equation in ~\cite{KMM}.
Pioneering results on internal modes were obtained in~\cite{SW}. See other references in~\cite{KMM}.

For $\alpha\in (\frac 12,1)$, there are no other eigenvalue on $[0,1)$. 
For $\alpha=\frac 12$, there is an odd resonance at $1$. 
For $\alpha\in (\frac 13,\frac 12)$, there is a fourth eigenvalue, associated to an odd eigenfunction.
For $\alpha\in (\frac 14,\frac 13)$, there are five eigenvalues, three of them being associated to even eigenfunctions.
In particular, there are two even internal modes. This procedure can be continued for all $\alpha>0$, showing the emergence of arbitrarily many internal modes (and sometimes resonances) as $\alpha\to 0^+$. 

The above information is taken from Section~3 of~\cite{CGNT}.

\section{Preliminaries}
\subsection{Decomposition of a solution in a vicinity of the soliton}\label{S:2}
Let $\pp=(\phi,\partial_t \phi)$ be a solution of~\eqref{nlkg} satisfying~\eqref{th:1:stab} for some small $\delta>0$.
We decompose $(\phi,\partial_t \phi)$ as follows
\begin{equation}\label{eq:decomp}\left\{\begin{aligned}
&\phi(t,x)=Q(x)+a_1(t)Y_0(x)+u_1(t,x)\\
&\partial_t \phi(t,x)=a_2(t)\nu_0 Y_0(x)+u_2(t,x)
\end{aligned}\right.\end{equation}
where
\begin{equation*}
a_1(t)=\langle \phi(t) -Q,Y_0\rangle,\quad
a_2(t)=\frac1{\nu_0}\langle \partial_t\phi(t),Y_0\rangle,
\end{equation*}
so that
\begin{equation}\label{myortho}
\langle u_1(t),Y_0\rangle=\langle u_2(t),Y_0\rangle=0.
\end{equation}
Setting
\begin{equation}\label{def:b}
b_+=\frac 12 (a_1+a_2),\quad ~b_-=\frac 12(a_1-a_2),
\end{equation}
we observe that $\pp$ also writes as
\begin{equation}\label{decompb}
\pp = (Q,0)+\uu +b_- \YYm+ b_+\YYp,\quad \uu=(u_1,u_2).
\end{equation}
From~\eqref{th:1:stab}, for all $t\in [0,\infty)$, it holds
\begin{equation}\label{small}
 \|u_1(t)\|_{H^1}+\|u_2(t)\|_{L^2}+|a_1(t)|+|a_2(t)|+|b_+(t)|+|b_-(t)|\leq C_0 \delta.
\end{equation}
Moreover, using $Q''-Q+f(Q)=0$, $LY_0=-\nu_0^2 Y_0$ and~\eqref{myortho}, the systems of equations of~$(a_1,a_2)$ and~$(u_1,u_2)$ write
\begin{equation}\label{eq:a}\left\{
\begin{aligned}
& \dot a_1 = \nu_0 a_2 \\
& \dot a_2 = \nu_0 a_1 + \frac{N_0}{\nu_0}
\end{aligned}
\right.
\qquad \mbox{equivalently}\qquad
\left\{
\begin{aligned}
& \dot b_+ = \nu_0 b_+ + \frac{N_0}{2\nu_0} \\
& \dot b_- = -\nu_0 b_- - \frac{N_0}{2\nu_0} 
\end{aligned}
\right.
\end{equation}
and
\begin{equation}\label{eq:u}\left\{
\begin{aligned}
& \dot u_1 = u_2 \\
& \dot u_2 =- L u_1 + N^\perp
\end{aligned}
\right.\end{equation}
where
\begin{equation}\label{def:N}\begin{aligned}
&N=f(Q+a_1Y_0+u_1)-f(Q)-f'(Q)a_1Y_0-f'(Q)u_1, 
\\&N_0=\langle N,Y_0\rangle,\quad N^\perp= N- N_0 Y_0.
\end{aligned}\end{equation}

\subsection{Notation for virial arguments}
Let $\rho$ be the following weight function 
\begin{equation}\label{def:rho}
\rho(x)=\sech\left( \frac{x}{10}\right).
\end{equation}
For any function $w\in H^1$, consider the norm
\begin{equation}\label{def:w_loc}
\|w\|_\loc=\left[ \int \left((\partial_x w)^2 + \rho w^2 \right)\right]^{\frac 12}.
\end{equation}
We consider a smooth even function $\chi:\RR\to \RR$ satisfying
\begin{equation}\label{on:chi}
\mbox{$\chi=1$ on $[-1,1]$, $\chi=0$ on $(-\infty,-2]\cup[2,+\infty)$,
$\chi'\leq 0$ on $[0,+\infty)$.}
\end{equation}
For $A>0$, we define the functions $\zeta_A$ and $\varphi_A$ as follows
\begin{equation*}
\zeta_A(x)=\exp\left(-\frac1A(1-\chi(x))|x|\right),\quad
\varphi_A(x)=\int_0^x \zeta_A^2(y) dy,\quad x\in \RR.
\end{equation*}
For $B>0$, we also define
\begin{equation}\label{def:zetaB}
\zeta_B(x)=\exp\left(-\frac1B(1-\chi(x))|x|\right),\quad
\varphi_B(x)=\int_0^x \zeta_B^2(y) dy,\quad x\in \RR.
\end{equation}
and we consider the function $\psi$ defined as
\begin{equation}\label{def:chiB}
\psi_B(x)=\chi_B^2(x)\varphi_B(x)\quad\mbox{where}\quad
\chi_B(x)=\chi\left(\frac{x}{B^2}\right),\quad x\in \RR.
\end{equation}
The notation $X\lesssim Y$ means $X\leq CY$ for a constant independent of $A$ and $B$.

\smallskip

These functions $\zeta_A$, $\varphi_A$, $\zeta_B$, $\varphi_B$ and $\psi_B$ will be used in two distinct virial arguments with different scales 
\begin{equation}\label{eq:A_B}
A\gg B^2\gg B\gg 1.
\end{equation}

\section{Virial argument in $u$}
Set
\begin{equation}\label{def:I}
\mathcal I= \int \left(\varphi_A\partial_x u_1+\frac 12 \varphi_A' u_1\right)u_2,
\end{equation}
and
\begin{equation}\label{def:w}
w=\zeta_A u_1.
\end{equation}
We refer to~\cite{KMM} for the use of such virial argument in a similar context.
Here, $w$ represents a localized version of $u_1$, in the scale $A$ (see~\eqref{eq:A_B}). We shall prove the following result.
\begin{proposition}\label{pr:virialu}
There exist $C_1>0$ and $\delta_1>0$ such that for any $0<\delta\leq \delta_1$, the following holds.
Fix $A=\delta^{-1}$. Assume that for all $t\geq 0$,~\eqref{small} holds.
Then, for all $t\geq 0$,
\begin{equation}\label{eq:pr:u}
\dot{\mathcal I}\leq -\frac 12 \int (\partial_x w)^2 + C_1\int \sech\left(\frac x2\right) w^2+C_1 |a_1|^4.
\end{equation}
\end{proposition}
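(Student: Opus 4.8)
The plan is to differentiate $\mathcal I$ in time, substitute the equations~\eqref{eq:u} for $(u_1,u_2)$, and organize the result into a quadratic form in $u_1$ plus error terms coming from the nonlinearity $N^\perp$ and the coupling with $a_1$. The leading-order computation is the classical virial identity for the operator $L$: using $\dot u_1 = u_2$ and $\dot u_2 = -Lu_1 + N^\perp$, and integrating by parts, $\dot{\mathcal I}$ produces the standard bulk term $-\int \zeta_A^2 (\partial_x u_1)^2$ plus lower-order terms involving $\varphi_A'', \varphi_A'''$ and the potential $(2\alpha+1)Q^{2\alpha}$, together with the contribution $\int(\varphi_A \partial_x u_1 + \tfrac12 \varphi_A' u_1) N^\perp$ from the nonlinearity. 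The term $\int u_2^2$ coming from $\dot u_1$ should cancel against a corresponding piece from $\dot u_2$, as is usual with this symmetric choice of $\mathcal I$; what survives is a quadratic expression in $u_1$ alone, up to the nonlinear and $a_1$ contributions.

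Next I would rewrite the $u_1$-quadratic part in terms of $w = \zeta_A u_1$. Since $\partial_x w = \zeta_A \partial_x u_1 + \zeta_A' u_1$ and $\zeta_A' = -\tfrac1A(1-\chi)\sgn(x)\,\zeta_A$ is $O(A^{-1})$-small (and supported away from the origin), one has $\int \zeta_A^2 (\partial_x u_1)^2 = \int (\partial_x w)^2 + O(A^{-1})\|w\|_\rho^2$ up to harmless commutator terms. The potential term $\int \zeta_A^2 Q^{2\alpha} u_1^2$ is $\lesssim \int \operatorname{sech}(x/2)\, w^2$ by exponential decay of $Q$, and similarly all the lower-order $\varphi_A''', $ etc. terms, which carry either explicit $A^{-1}$ factors or the exponentially localized weight $\zeta_A^2 \lesssim \operatorname{sech}(x/2)$ on their support, can be absorbed into $C_1 \int \operatorname{sech}(x/2)\, w^2$ after taking $A = \delta^{-1}$ large, i.e. $\delta$ small. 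This is where the choice $A = \delta^{-1}$ and the smallness bound~\eqref{small} enter: they let us defeat the constants in front of the error terms and keep the $-\tfrac12\int(\partial_x w)^2$ with its prefactor.

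For the nonlinear term, I would use the pointwise estimate $|N^\perp| \lesssim |N| \lesssim (|a_1 Y_0| + |u_1|)^2 (Q^{2\alpha-1} + |a_1 Y_0|^{2\alpha-1} + |u_1|^{2\alpha-1})$ near $Q$ — more precisely $|N| \lesssim Q^{2\alpha-1}(a_1^2 Y_0^2 + u_1^2) + (\text{higher order})$, using $f \in C^2$ for $\alpha > 1$ — together with the smallness of $u_1$ in $H^1 \hookrightarrow L^\infty$. Pairing against $\varphi_A \partial_x u_1 + \tfrac12\varphi_A' u_1$ and integrating by parts to move a derivative off $u_1$ when needed, the $u_1$-cubic contributions are controlled by $\delta \int(\partial_x w)^2 + \delta \int \operatorname{sech}(x/2) w^2$ (absorbable), while the terms involving $a_1$ produce, via Cauchy–Schwarz and the exponential localization of $Q^{2\alpha-1}Y_0^2$, a contribution of size $\lesssim a_1^2 \|w\|_\rho \lesssim a_1^4 + \delta\|w\|_\rho^2$, giving the $C_1 |a_1|^4$ term. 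The main obstacle is the careful bookkeeping of the nonlinear term at low power $\alpha > 1$: one must exploit the $C^2$ regularity of $f$ (which fails for $\alpha \le 1/2$, but that is not our range) and be sure the derivative landing on $u_1$ in $\int \varphi_A \partial_x u_1 \, N^\perp$ is either integrated by parts or bounded using $\|u_1\|_{H^1}$ smallness, rather than naively estimated — and to check that the cross terms mixing $a_1 Y_0$ with $\partial_x u_1$ do not generate an uncontrolled $|a_1|^2 \|\partial_x w\|$ that cannot be absorbed. Splitting $a_1^2\|\partial_x w\|_{L^2} \le \tfrac14\|\partial_x w\|_{L^2}^2 + a_1^4$ handles this cleanly and explains the exponent $4$ on $a_1$.
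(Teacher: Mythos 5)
Your overall strategy matches the paper's: differentiate $\mathcal I$, observe that the $u_2$-contribution vanishes (in fact by antisymmetry of $\varphi_A\partial_x+\tfrac12\varphi_A'$ alone, not by cancellation against a piece of $\dot u_2$), convert the quadratic part into $-\int(\partial_x w)^2$ plus commutator terms of size $A^{-1}\sech(x)$, Taylor-expand the nonlinearity, and use Young's inequality on the localized $a_1$--$u_1$ cross terms to produce $|a_1|^4$. All of that is sound. (One incidental misstatement: $\zeta_A^2$ is \emph{not} $\lesssim\sech(x/2)$ — it is essentially $1$ on $|x|\lesssim A=\delta^{-1}$; the localization in the quadratic error terms comes from the support of $\chi''$, $\chi'$ and from the factors $Q$, $Y_0$, not from $\zeta_A$.)

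The genuine gap is the purely nonlinear contribution, i.e.\ the terms $\int\varphi_A' F(u_1)$ and $\int\varphi_A' u_1 f(u_1)$, which reduce to $\int\zeta_A^2|u_1|^{2\alpha+2}$. You file these under ``$u_1$-cubic contributions controlled by $\delta\int(\partial_x w)^2+\delta\int\sech(x/2)w^2$'', but none of the standard routes gives this: there is no spatial localization (no factor of $Q$), so the term is not bounded by $\int\sech(x/2)w^2$; writing it as $\int\zeta_A^{-2\alpha}|w|^{2\alpha+2}$ exposes an exponentially \emph{growing} weight, so Gagliardo--Nirenberg on $w$ does not apply; and the crude bound $\|u_1\|_{L^\infty}^{2\alpha}\|w\|_{L^2}^2\lesssim\delta^{2\alpha+2}$ produces an additive constant that is not an admissible error in~\eqref{eq:pr:u} (and would destroy the time-integration in Section~\ref{S:5}). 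The paper handles this with Claim~\ref{cl:nonlin}: an integration by parts against the exponential weight $e^{2\alpha x/A}$ yields
\begin{equation*}
\int \zeta_A^2 |u_1|^{2\alpha+2}\lesssim A^2\|u_1\|_{L^\infty}^{2\alpha}\int(\partial_x w)^2
\lesssim \delta^{2(\alpha-1)}\int(\partial_x w)^2,
\end{equation*}
which is absorbable precisely because $\alpha>1$ and $A=\delta^{-1}$. This is the one place in the proposition where the hypothesis $\alpha>1$ interacts quantitatively with the choice of $A$, and your sketch does not contain this idea or a substitute for it.
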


\begin{remark}
Note that estimate~\eqref{eq:pr:u} does not involve any type of spectral analysis. Its purpose is to give a simple control of $ \int (\partial_x w)^2 $ in terms of $\int \sech (\frac x2 ) w^2$ and $|a_1|^4$.
\end{remark}

The rest of this section is devoted to the proof of Proposition~\ref{pr:virialu}.
We compute from~\eqref{def:I}
\begin{equation*}
\dot{\mathcal I} = \int \left(\varphi_A\partial_x \dot u_1+\frac 12 \varphi_A' \dot u_1\right)u_2
+\int \left(\varphi_A\partial_x u_1+\frac 12 \varphi_A' u_1\right)\dot u_2.
\end{equation*}
Replacing $\dot u_1$ by $u_2$ and integrating by parts, the first integral in the right-hand side vanishes.
The expression of $\dot u_2$ in~\eqref{eq:u} rewrites
\begin{equation*}
\dot u_2 = \partial_x^2 u_1-u_1+f(Q+a_1Y_0+u_1)-f(Q)-f'(Q)a_1Y_0-N_0Y_0,
\end{equation*}
and so
\begin{align*}
\dot{\mathcal I} &= 
\int \left(\varphi_A\partial_x u_1+\frac 12 \varphi_A' u_1\right) \left(\partial_x^2 u_1-u_1\right)
\\&
\quad +\int \left(\varphi_A\partial_x u_1+\frac 12 \varphi_A' u_1\right)\left[f(Q+a_1Y_0+u_1)-f(Q)-f'(Q)a_1Y_0-N_0Y_0\right].
\end{align*}
To treat the first line in the expression of $\dot{\mathcal I}$, we claim the following. 
\begin{lemma}\label{le:1}
It holds
\begin{equation}\label{eq:1}
\int \left(\varphi_A\partial_x u_1+\frac 12 \varphi_A' u_1\right)(\partial_x^2u_1-u_1)
= - \int (\partial_x w)^2 - \frac 12 \int \left(\frac{\zeta_A''}{\zeta_A}-\frac{(\zeta_A')^2}{\zeta_A^2}\right) w^2.
\end{equation}
Moreover
\begin{equation}\label{sur:zetaA}
\frac{\zeta_A''}{\zeta_A}-\frac{(\zeta_A')^2}{\zeta_A^2}
=\frac 1A\left[\chi''(x) |x|+2\chi'(x)\sgn(x)\right]
\end{equation}
and
\begin{equation}\label{bd:A}
\left|\frac{\zeta_A''}{\zeta_A}-\frac{(\zeta_A')^2}{\zeta_A^2}\right|\lesssim \frac {\mathbf{1}_{1\leq |x|\leq 2} (x)}A\lesssim \frac{\sech(x)}A.
\end{equation}
\end{lemma}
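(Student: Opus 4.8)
The plan is to prove \eqref{eq:1} by a direct integration by parts, handling the two terms $\varphi_A \partial_x u_1$ and $\frac12 \varphi_A' u_1$ separately, and then to substitute the expression for $w = \zeta_A u_1$ and recognize the combination of weights that appears. First I would expand the left-hand side into four pieces: $\int \varphi_A (\partial_x u_1)(\partial_x^2 u_1)$, $-\int \varphi_A (\partial_x u_1) u_1$, $\frac12 \int \varphi_A' u_1 \partial_x^2 u_1$, and $-\frac12 \int \varphi_A' u_1^2$. For the first piece, write $(\partial_x u_1)(\partial_x^2 u_1) = \frac12 \partial_x\big((\partial_x u_1)^2\big)$ and integrate by parts to get $-\frac12 \int \varphi_A' (\partial_x u_1)^2$. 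For the second piece, $\varphi_A (\partial_x u_1) u_1 = \frac12 \varphi_A \partial_x(u_1^2)$, so it contributes $+\frac14 \int \varphi_A' u_1^2$ after integration by parts. For the third piece, integrate by parts once to get $-\frac12\int \partial_x(\varphi_A' u_1)\partial_x u_1 = -\frac12 \int \varphi_A'' u_1 \partial_x u_1 - \frac12\int \varphi_A' (\partial_x u_1)^2$; then in the first of these, $u_1 \partial_x u_1 = \frac12 \partial_x(u_1^2)$, giving $+\frac14 \int \varphi_A''' u_1^2$. Collecting everything, the left-hand side equals $-\int \varphi_A' (\partial_x u_1)^2 + \frac14 \int \varphi_A''' u_1^2$ (the two $-\frac12 u_1^2$-type terms with coefficients $+\frac14$ and $-\frac12$ combine with the $\varphi_A'''$ term — I should track the $-\frac12\int\varphi_A' u_1^2$ carefully, but it will be absorbed into the final weight once $w$ is introduced).

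The key identity is then that $\varphi_A' = \zeta_A^2$, so $u_1 = w/\zeta_A$ and a short computation gives $\int \varphi_A'(\partial_x u_1)^2 = \int \zeta_A^2 (\partial_x(w/\zeta_A))^2 = \int (\partial_x w)^2 - \int \frac{\zeta_A''}{\zeta_A} w^2$ after integrating by parts the cross term (using $\partial_x(w/\zeta_A) = \zeta_A^{-1}\partial_x w - \zeta_A^{-2}\zeta_A' w$ and expanding the square, the cross term $-2\int \zeta_A^{-1}\zeta_A' w \partial_x w = -\int \zeta_A^{-1}\zeta_A' \partial_x(w^2) = \int \partial_x(\zeta_A^{-1}\zeta_A') w^2 = \int(\zeta_A''/\zeta_A - (\zeta_A')^2/\zeta_A^2)w^2$, while the last square gives $\int (\zeta_A')^2/\zeta_A^2 \, w^2$). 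Combining with the $\varphi_A'''$ term, which equals $\partial_x^2(\zeta_A^2) = 2(\zeta_A')^2 + 2\zeta_A \zeta_A''$ differentiated once more — actually $\varphi_A''' = (\zeta_A^2)'' = 2((\zeta_A')^2 + \zeta_A\zeta_A'')$ — and matching coefficients against $w^2 = \zeta_A^2 u_1^2$, everything collapses to the stated right-hand side $-\int(\partial_x w)^2 - \frac12\int(\zeta_A''/\zeta_A - (\zeta_A')^2/\zeta_A^2) w^2$. I expect the bookkeeping of these lower-order $w^2$ coefficients to be the only delicate point; it is a routine but error-prone reconciliation.

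For \eqref{sur:zetaA}, I would simply differentiate. Writing $g_A(x) = -\frac1A(1-\chi(x))|x|$ so that $\zeta_A = e^{g_A}$, one has $\zeta_A'/\zeta_A = g_A'$ and $\zeta_A''/\zeta_A - (\zeta_A')^2/\zeta_A^2 = g_A''$. Then $g_A'(x) = -\frac1A\big(-\chi'(x)|x| + (1-\chi(x))\sgn(x)\big)$ and differentiating again, using $\sgn' = 0$ away from $0$ (and noting $(1-\chi)$ vanishes near $0$ so no Dirac mass arises), gives $g_A''(x) = -\frac1A\big(-\chi''(x)|x| - \chi'(x)\sgn(x) - \chi'(x)\sgn(x)\big) = \frac1A\big(\chi''(x)|x| + 2\chi'(x)\sgn(x)\big)$, which is \eqref{sur:zetaA}. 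Finally \eqref{bd:A} follows because $\chi''$ and $\chi'$ are bounded and supported in $\{1 \le |x| \le 2\}$ by \eqref{on:chi} (recall $\chi \equiv 1$ on $[-1,1]$), and on that compact set $|x| \lesssim 1$, so the whole expression is $\lesssim A^{-1}\mathbf{1}_{1\le |x|\le 2}$; since $\sech(x) \gtrsim \mathbf{1}_{1\le|x|\le 2}$ on that set, the second inequality in \eqref{bd:A} holds as well. The main obstacle, such as it is, is purely the careful sign-tracking in the integration by parts of the first display; the rest is direct differentiation and elementary estimates.
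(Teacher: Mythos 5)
Your proposal is correct and follows essentially the same route as the paper: integrate by parts to reach the intermediate identity $-\int\varphi_A'(\partial_x u_1)^2+\frac14\int\varphi_A'''\,u_1^2$, rewrite both terms through $w=\zeta_A u_1$ and $\varphi_A'=\zeta_A^2$, and obtain \eqref{sur:zetaA} by differentiating $\log\zeta_A$ twice, with \eqref{bd:A} following from the support and boundedness of $\chi'$ and $\chi''$. The only (self-flagged) bookkeeping slip is in your second piece, where $-\int\varphi_A(\partial_x u_1)u_1=+\frac12\int\varphi_A'\,u_1^2$ rather than $+\frac14\int\varphi_A'\,u_1^2$, and this cancels exactly against the fourth piece $-\frac12\int\varphi_A'\,u_1^2$ instead of being absorbed later; the collected intermediate identity you state is nonetheless the correct one, so nothing downstream is affected.
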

\begin{proof}
Proof of~\eqref{eq:1}.
By integration by parts
\begin{equation*}
\int \left(\varphi_A\partial_x u_1+\frac 12 \varphi_A' u_1\right)(\partial_x^2u_1-u_1)
=-\int \varphi_A' (\partial_x u_1)^2+\frac 14 \int \varphi_A'''u_1^2.
\end{equation*}
We rewrite the above expression using the auxiliary function $w$.
Indeed,
\begin{align*}
\int (\partial_x w)^2 & = \int (\zeta_A \partial_x u_1+\zeta_A'u_1)^2 
= \int \zeta_A^2(\partial_x u_1)^2 + 2 \int \zeta_A\zeta_A' u_1 \partial_x u_1 + \int (\zeta_A')^2 u_1^2\\
& = \int \varphi_A' (\partial_x u_1)^2 - \int \zeta_A\zeta_A'' u_1^2
=\int \varphi_A' (\partial_x u_1)^2-\int \frac{\zeta_A''}{\zeta_A} w^2
\end{align*}
and so
\begin{equation*}
\int \varphi_A' (\partial_x u_1)^2=\int (\partial_x w)^2+\int \frac{\zeta_A''}{\zeta_A} w^2.
\end{equation*}
Next,
\begin{equation}\label{avoir}
\int \varphi_A''' u_1^2 = \int \frac{(\zeta_A^2)''}{\zeta_A^2} w^2 
=2 \int \left(\frac{\zeta_A''}{\zeta_A}+\frac{(\zeta_A')^2}{\zeta_A^2}\right) w^2.
\end{equation}
Identity~\eqref{eq:1} follows.
\smallskip

Proof of~\eqref{sur:zetaA}-\eqref{bd:A}.
By elementary computations, we have
\begin{equation*}
\frac{\zeta_A'}{\zeta_A}=-\frac 1A \left[ -\chi'(x) |x|+(1-\chi(x)) \sgn(x)\right], 
\end{equation*}
\begin{equation*}
\frac{\zeta_A''}{\zeta_A}=\frac 1{A^2} \left[ -\chi'(x) |x|+(1-\chi(x)) \sgn(x)\right]^2
+\frac 1A \left[\chi''(x) |x| +2\chi'(x) \sgn(x)\right],
\end{equation*}
which proves~\eqref{sur:zetaA}.
Estimate~\eqref{bd:A} then follows from the definition of $\chi$.
\end{proof}

To treat the second line in the expression of $\dot{\mathcal I}$, we claim the following.
\begin{lemma}\label{le:2}
\begin{equation}\label{eq:2}\begin{aligned}
&\left|\int \left(\varphi_A\partial_x u_1+\frac 12 \varphi_A' u_1\right)
\left[f(Q+a_1Y_0+u_1)-f(Q)-a_1 f'(Q)Y_0-N_0Y_0\right]\right|
\\
&\quad \lesssim |a_1|^4+ \int \sech\left( \frac x2\right) w_1^2+
A^2\|u_1\|_{L^\infty}^{2\alpha} \int |\partial_x w|^2.
\end{aligned}\end{equation}
\end{lemma}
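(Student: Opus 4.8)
The plan is to split the nonlinear bracket $f(Q+a_1Y_0+u_1)-f(Q)-a_1 f'(Q)Y_0-N_0 Y_0$ into a part that is genuinely quadratic (or higher) in the small quantities and a part that is linear in $a_1$ but carries a fast-decaying weight coming from $Q$, and then estimate the pairing against the vector field $\varphi_A\partial_x u_1+\tfrac12\varphi_A' u_1$ term by term. Recall $N=f(Q+a_1Y_0+u_1)-f(Q)-f'(Q)(a_1Y_0+u_1)$, so the bracket equals $N+f'(Q)u_1-N_0Y_0$. Writing $f'(Q)u_1=f'(Q)w/\zeta_A$ and noting $f'(Q)$ decays like $\sech^2(\alpha x)$, this term is harmless; the $-N_0Y_0$ term is a smooth rapidly decaying function times the scalar $N_0$, which by Taylor expansion and~\eqref{small} obeys $|N_0|\lesssim |a_1|^2+\|u_1\|_{L^\infty}^{2\alpha-1}\| \rho^{1/2} u_1\|_{L^2}^2+\cdots$, comfortably absorbed. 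The bulk of the work is the term $N$ itself.

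For the pairing of the first-derivative part, $\int \varphi_A (\partial_x u_1) N$, I would integrate by parts to move the derivative off $u_1$, producing $-\int \varphi_A' u_1 N - \int \varphi_A u_1 \partial_x N$; combined with the $\tfrac12\varphi_A'$ piece this reduces everything to integrals of the form $\int \varphi_A u_1 (\partial_x N)$ and $\int \varphi_A' u_1 N$, i.e. to pointwise control of $N$ and $\partial_x N$. The key pointwise bounds, valid since $f$ is $C^1$ with $f'(\phi)\sim|\phi|^{2\alpha}$, are of the schematic type
\begin{equation*}
|N|\lesssim Q^{2\alpha-1}(|a_1Y_0|+|u_1|)^2 + (|a_1Y_0|+|u_1|)^{2\alpha+1},
\end{equation*}
with an analogous bound for $\partial_x N$ involving in addition $Q'$, $Y_0'$, $\partial_x u_1$. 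One then substitutes $u_1=w/\zeta_A$, $\partial_x u_1=(\partial_x w - (\zeta_A'/\zeta_A)w)/\zeta_A$, uses $|\zeta_A'/\zeta_A|\lesssim A^{-1}\lesssim 1$, and uses that $\varphi_A$ is bounded while $|\varphi_A(x)|\lesssim |x|\lesssim A$ and $\zeta_A^{-1}\lesssim e^{|x|/A}$ grows slowly so that $\varphi_A \zeta_A^{-k}$ is still at worst polynomially bounded and always dominated by the decay of $Q$ or $\rho$ where such decay is present. The purely $u_1$-dependent, weightless pieces — those without any $Q$ factor — are where the factor $A^2\|u_1\|_{L^\infty}^{2\alpha}\int|\partial_x w|^2$ on the right-hand side comes from: after passing to $w$ one sees one or two copies of $\zeta_A^{-1}$ (each costing a factor $A$ after the $L^2$ estimate via the crude bound $\int \zeta_A^{-2}(\partial_x w)^2\lesssim$ something, or more cleanly one keeps $\varphi_A'=\zeta_A^2$ paired with $(\partial_x u_1)^2=\zeta_A^{-2}(\partial_x w+\cdots)^2$ so that the weights cancel), together with the small factor $\|u_1\|_{L^\infty}^{2\alpha}$ from the remaining powers. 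The terms carrying a $Q$ or $Q'$ factor instead produce, after writing $u_1^2=\zeta_A^{-2}w^2$ and absorbing $Q^{2\alpha-1}\zeta_A^{-2}\lesssim \sech(x/2)$ (true once $A$ is large, since $Q^{2\alpha-1}$ decays like $e^{-(2\alpha-1)|x|}$ with $2\alpha-1>1$ and beats the slow growth $e^{2|x|/A}$), contributions of the form $\int \sech(x/2)\,w^2$; the genuinely quadratic-in-$a_1$ pieces contribute $|a_1|^2\int Q^{2\alpha-1}(\cdots)$ which is $\lesssim |a_1|^2$, and combining with the $|a_1|$-linear-times-$u_1$ cross terms via Young's inequality $|a_1|\,|u_1|\lesssim |a_1|^2+|u_1|^2$ against the $Q$-weight yields the stated $|a_1|^4$ (after re-using smallness to upgrade $|a_1|^2\lesssim |a_1|^4$ is \emph{not} allowed, so in fact the cross terms must be arranged to land in $|a_1|^4+\int\sech(x/2)w^2$ directly — this is done by Young with a weight, pairing $|a_1|\,Q^{\alpha-\frac12}$-type factors so the $a_1$ side is $|a_1|^4 Q^{\text{something}}$ integrable and the $u_1$ side is $\sech(x/2)w^2$).

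The main obstacle is precisely this last bookkeeping: making sure that every term either (i) carries enough decay from $Q$, $Q'$, $f'(Q)$, or the smooth cutoff in $Y_0, N_0Y_0$ to kill the $\zeta_A^{-1}$ factors and land in $\int\sech(x/2)w^2$, or (ii) is at least quadratic in $u_1$ beyond the two $w$'s, so the extra smallness $\|u_1\|_{L^\infty}^{2\alpha}$ can pay for the $A^2$ loss, or (iii) is purely $a_1$ and lands in $|a_1|^4$ after a weighted Young inequality — with no leftover linear-in-$a_1$ term and no term with an uncompensated power of $A$ or of $\zeta_A^{-1}$. The constant $2\alpha-1>1$ (equivalently $\alpha>1$) is used exactly to guarantee $Q^{2\alpha-1}\zeta_A^{-2}\lesssim \sech(x/2)$, i.e. that the soliton's decay dominates the virial weight's slow growth; this is the only place the hypothesis $\alpha>1$ enters in this lemma.
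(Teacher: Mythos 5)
Your overall architecture (Taylor expansion of the nonlinear bracket, conversion of $u_1$ to $w=\zeta_A u_1$, Young's inequality for the $a_1$--$u_1$ cross terms, integration by parts against $Y_0$ for the $N_0Y_0$ piece) matches the paper's, but there is a genuine gap at the single hard point of the lemma: the term that is purely nonlinear in $u_1$ and carries no decaying weight. After the integrations by parts this term is $\int \varphi_A' |u_1|^{2\alpha+2}=\int\zeta_A^2|u_1|^{2\alpha+2}=\int |u_1|^{2\alpha}w^2$, and neither mechanism you propose controls it. The ``crude'' route gives $\|u_1\|_{L^\infty}^{2\alpha}\int w^2$, but the global unweighted quantity $\int w^2$ is \emph{not} bounded by $A^2\int(\partial_x w)^2$ or by $\int\sech(x/2)w^2$ (there is no Poincar\'e inequality for $w$ on the line, and falling back on $\int w^2\leq\|u_1\|_{L^2}^2\lesssim\delta^2$ produces an additive constant absent from the right-hand side of \eqref{eq:2}, which would ruin the time integration later). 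The ``weight cancellation'' route pairs $\varphi_A'=\zeta_A^2$ with $(\partial_x u_1)^2$, which is not the structure of this term: it contains no derivative of $u_1$. The missing idea is exactly Claim~\ref{cl:nonlin}: on $\{x\geq 2\}$ the weight $\zeta_A^{-2\alpha}=e^{2\alpha x/A}$ equals $\frac{A}{2\alpha}\bigl(e^{2\alpha x/A}\bigr)'$, so one integrates by parts to put a derivative on $|w|^{2\alpha+2}$ at the cost of a factor $A$, uses $|w|^{2\alpha}\leq\zeta_A^{2\alpha}\|u_1\|_{L^\infty}^{2\alpha}$ to cancel half the exponential weight, and closes by Cauchy--Schwarz with self-absorption. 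This is where the factor $A^2\|u_1\|_{L^\infty}^{2\alpha}\int|\partial_x w|^2$ actually comes from; without it the stated bound is not reached.

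A second, related problem is your generic integration by parts $\int\varphi_A(\partial_x u_1)N=-\int\varphi_A'u_1N-\int\varphi_A u_1\partial_x N$: it leaves the unbounded weight $\varphi_A$ (of size up to $A$) on $\partial_x N$, whose purely nonlinear part behaves like $|u_1|^{2\alpha}\partial_x u_1$ with no spatial decay, and the resulting term $\int\varphi_A|u_1|^{2\alpha+1}|\partial_x u_1|$ cannot be closed. The paper instead exploits the exact-derivative structure $(\partial_x u_1)\,g(Q+a_1Y_0+u_1)=\partial_x\bigl[G(Q+a_1Y_0+u_1)\bigr]-(Q'+a_1Y_0')\,g(\cdots)$, so that after integrating by parts the main term carries the bounded weight $\varphi_A'=\zeta_A^2$ and every correction carries a decaying factor $Q'$ or $Y_0'$; this decomposition (the terms $I_1,\dots,I_4$) is needed, not optional bookkeeping. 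Two smaller points: a bare $|a_1|^2$ contribution, which you mention in passing, would falsify the lemma; it never occurs because the vector field always supplies a factor $u_1$ or $\partial_x u_1$, so pure-$a_1$ pieces only appear as $a_1^2|u_1|\sech(\cdot)$ and split into $|a_1|^4+\sech(x/2)w^2$. And the hypothesis on $\alpha$ enters this lemma as $\alpha\geq 1$ (regularity of $f$ for the Taylor remainders), not through $Q^{2\alpha-1}\zeta_A^{-2}\lesssim\sech(x/2)$, which holds for smaller $\alpha$ too; the strict inequality $\alpha>1$ is needed elsewhere, for the sign of the potential in $L_0$.
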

\begin{proof}
First, we treat the term $-\int \left(\varphi_A\partial_x u_1+\frac 12 \varphi_A' u_1\right)N_0Y_0$.
By Taylor's expansion, one has
\begin{equation}\label{sur:N}
|N|\lesssim a_1^2 Q^{2\alpha-1} Y_0^2 + Q^{2\alpha-1} u_1^2
+ |a_1|^{2\alpha+1} Y_0^{2\alpha+1}+|u_1|^{2\alpha+1},
\end{equation}
and thus, by decay estimates on $Q$ and $Y_0$, and by~\eqref{small}, $|a_1|\lesssim 1$, $\|u_1\|_{L^\infty}\lesssim
\|u_1\|_{H^1}\lesssim 1$, $A\geq 4$, it holds
\begin{equation}\label{sur:N0}
|N_0|\lesssim a_1^2 + \int \sech\left(x\right) u_1^2
\lesssim a_1^2 + \int \sech\left(\frac x2\right) w^2.
\end{equation}
Using integration by parts, 
\begin{equation*}
-\int \left(\varphi_A\partial_x u_1+\frac 12 \varphi_A' u_1\right) Y_0
= \int u_1 \left(\varphi_A\partial_x Y_0+\frac 12 \varphi_A' Y_0\right).
\end{equation*}
Note that for all $x\in \RR$, $|\varphi_A'(x)|\leq 1$ and $|\varphi_A(x)|\leq |x|$, and so
\begin{equation}\label{on:varphiA}
|\varphi_A(x) \sech(x)|+|\varphi_A'(x) \sech(x)|\leq (|x|+1) \sech(x)\lesssim \sech\left( \frac 34 x\right),
\end{equation}
for an implicit constant independent of $A$.
Thus, by the Cauchy-Schwarz inequality,
\begin{align*}
\left|N_0 \int \left(\varphi_A\partial_x u_1+\frac 12 \varphi_A' u_1\right)Y_0 \right|
\lesssim a_1^4 + \int \sech\left(\frac x2\right) w_1^2.
\end{align*}

Second, we decompose
\begin{align*}
&\int \left(\varphi_A\partial_x u_1+\frac 12 \varphi_A' u_1\right)\left[f(Q+a_1Y_0+u_1)-f(Q)-f'(Q)a_1Y_0\right]\\
&\quad = \int \varphi_A \partial_x \left[ F(Q+a_1Y_0+u_1)-F(Q+a_1Y_0)-(f(Q)+f'(Q)a_1Y_0)u_1\right]\\
&\qquad -\int \varphi_A Q'\left[f(Q+a_1Y_0+u_1)-f(Q+a_1Y_0)-(f'(Q)+f''(Q)a_1Y_0)u_1\right]\\
&\qquad -a_1 \int \varphi_A Y_0' \left[f(Q+a_1Y_0+u_1)-f(Q+a_1Y_0)-f'(Q)u_1\right]\\
&\qquad + \frac 12 \int \varphi_A' u_1\left[f(Q+a_1Y_0+u_1)-f(Q)-f'(Q)a_1Y_0\right]\\
&\quad=I_1+I_2+I_3+I_4.
\end{align*}
We rewrite $I_1$, $I_2$, $I_3$ and $I_4$ as follows
\begin{align*}
I_1&=-\int \varphi_A'\left[ F(Q+a_1Y_0+u_1)-F(Q+a_1Y_0)-F'(Q+a_1Y_0)u_1-F(u_1)\right]\\
&\quad - \int \varphi_A'\left[f(Q+a_1Y_0)-f(Q)-f'(Q)a_1Y_0\right]u_1-\int \varphi_A' F(u_1),
\end{align*}
\begin{align*}
I_2&=-\int \varphi_A Q'\left[f(Q+a_1Y_0+u_1)-f(Q+a_1Y_0)-f'(Q+a_1Y_0)u_1\right]\\
&\quad -\int \varphi_A Q'\left[f'(Q+a_1Y_0)-f'(Q)-f''(Q)a_1Y_0\right]u_1,
\end{align*}
\begin{align*}
I_3&= -a_1 \int \varphi_A Y_0' \left[f(Q+a_1Y_0+u_1)-f(Q+a_1Y_0)-f'(Q+a_1Y_0)u_1\right]\\
&\quad-a_1\int \varphi_A Y_0' \left[f'(Q+a_1Y_0)-f'(Q)\right]u_1,
\end{align*}
and
\begin{align*}
I_4&=\frac 12 \int \varphi_A' u_1\left[f(Q+a_1Y_0+u_1)-f(Q+a_1Y_0)-f(u_1)\right]\\
&\quad 
+\frac 12 \int \varphi_A' u_1 [f(Q+a_1Y_0)-f(Q)-f'(Q)a_1Y_0]
+\frac 12 \int \varphi_A' u_1f(u_1).
\end{align*}

To control the two terms that are purely nonlinear in $u_1$, we need the following claim.
\begin{claim}\label{cl:nonlin}
It holds
\begin{equation}\label{eq:3}
\int \zeta_A^2 |u_1|^{2\alpha+2}
=\int \zeta_A^{-2\alpha} |w|^{2\alpha+2}
\lesssim A^2\|u_1\|_{L^\infty}^{2\alpha} \int |\partial_x w|^2.
\end{equation}
\end{claim}
\begin{proof}[Proof of Claim~\ref{cl:nonlin}]
The first equality in~\eqref{eq:3} corresponds to the definition of $w$ in~\eqref{def:w}.
Next, by integration by parts and standard estimates, we have
\begin{align*}
&\int_0^{+\infty} \exp\left(\frac{2\alpha}{A}x\right) |w|^{2\alpha+2} dx\\
&=-\frac{A}{2\alpha} |w(0)|^{2\alpha+2}-\frac{A}{2\alpha} \int_0^{+\infty}\exp\left(\frac{2\alpha}{A}x\right)\partial_x\left( |w|^{2\alpha+2}\right)dx
\\ 
&\quad \leq -\frac{\alpha+1}{\alpha}A \int_0^{+\infty}\exp\left(\frac{2\alpha}{A}x\right)(\partial_x w)w |w|^{2\alpha}dx \\
&\quad\leq \frac{\alpha+1}{\alpha}A\|u_1\|_{L^\infty}^{\alpha} \int_0^{+\infty}\exp\left(\frac{\alpha}{A}x\right)|\partial_x w||w|^{\alpha+1}dx\\ 
&\quad \leq \left(\frac{\alpha+1}{\alpha}\right)^2A^2 \|u_1\|_{L^\infty}^{2\alpha} \int_0^{+\infty} |\partial_x w|^2dx
+\frac 14 \int_0^{+\infty} \exp\left(\frac{2\alpha}{A}x\right) |w|^{2\alpha+2}dx.
\end{align*}
Thus,
\begin{equation*}
\int_0^{+\infty} \exp\left(\frac{2\alpha}{A}x\right) |w|^{2\alpha+2}dx\leq 
\frac 43\left(\frac{\alpha+1}{\alpha}\right)^2A^2 \|u_1\|_{L^\infty}^{2\alpha} \int_0^{+\infty} |\partial_x w|^2dx,
\end{equation*}
which implies~\eqref{eq:3}.
\end{proof}
In particular,~\eqref{eq:3} implies that
\begin{equation*}
\int \varphi_A' F(u_1) + \int \varphi_A' u_1f(u_1)
\lesssim \int \zeta_A^2 |u_1|^{2\alpha+2}
\lesssim A^2\|u_1\|_{L^\infty}^{2\alpha} \int |\partial_x w|^2,
\end{equation*}
which takes care of the last terms in $I_1$ and $I_4$.

\smallskip

By Taylor expansion, $\alpha\geq 1$, $|a_1|\lesssim 1$ and $\|u_1\|_{L^\infty}\lesssim 1$, we have
\begin{align*}
&\left|F(Q+a_1Y_0+u_1)-F(Q+a_1Y_0)-F'(Q+a_1Y_0)u_1-F(u_1)\right|\\
&\quad \lesssim 
|Q+a_1Y_0|^{2\alpha}u_1^2+|Q+a_1Y_0| |u_1|^{2\alpha+1} \lesssim
\sech (x ) u_1^2 \lesssim 
\sech\left( \frac x2\right) w_1^2.
\end{align*}
Similarly, using also~\eqref{on:varphiA} and $A\geq 4$, we find the following estimates
\begin{equation*}
 \left| \varphi_A Q'\left[f(Q+a_1Y_0+u_1)-f(Q+a_1Y_0)-f'(Q+a_1Y_0)u_1\right]\right| \lesssim
\sech\left( \frac x2\right) w_1^2,
\end{equation*}
\begin{align*}
\left|a_1 \varphi_A Y_0' \left[f(Q+a_1Y_0+u_1)-f(Q+a_1Y_0)-f'(Q+a_1Y_0)u_1\right]\right|
\lesssim \sech\left( \frac x2\right) w_1^2,
\end{align*}
and
\begin{align*}
\left| \varphi_A' u_1\left[f(Q+a_1Y_0+u_1)-f(Q+a_1Y_0)-f(u_1)\right]\right|
\lesssim \sech\left( \frac x2\right) w_1^2.
\end{align*}
Moreover, again by Taylor expansion and \eqref{on:varphiA} (with $A>8$), we have
\begin{align*}
&\left| \varphi_A' \left[f(Q+a_1Y_0+u_1)-f(Q)-f'(Q)a_1Y_0\right]u_1\right|\\
& +\left| \varphi_A Q'\left[f'(Q+a_1Y_0)-f'(Q)-f''(Q)a_1Y_0\right]u_1 \right|\\
& +\left| a_1 \varphi_A Y_0' \left[f'(Q+a_1Y_0)-f'(Q)\right]u_1\right|\\
& +\left| \varphi_A' u_1 [f(Q+a_1Y_0)-f(Q)-f'(Q)a_1Y_0] \right|\\
&\quad\lesssim \sech\left( \frac x2\right) |a_1|^2 |u_1|
\lesssim \sech\left( \frac x2\right) w_1^2 + \sech\left( \frac x4\right) |a_1|^4.
\end{align*}
Collecting these estimates,~\eqref{eq:2} is proved.

Taking $\|u_1\|_{L^\infty}\leq \delta_A$, for $\delta_A$ small enough, we have proved
\begin{equation*}
\dot{\mathcal I}\leq - \int (\partial_x w)^2+C\int w^2 \sech\left(\frac x2\right)
+C a_1^4+A^2\|u_1\|_{L^\infty}^{2\alpha} \int (\partial_x w)^2.
\end{equation*}
Using $A=\delta^{-1}$ and $\|u_1\|_{L^\infty}^{2\alpha}\lesssim \delta^{2\alpha}$ (from \eqref{small}),
for $\delta_1$ small enough, we obtain~\eqref{eq:pr:u}.
\end{proof}
 
\section{Virial argument for the transformed problem}
\subsection{Heuristic}\label{S:heuristic}
We recall results from~\cite{CGNT}, pages 1086-1087.
Let
\begin{equation*}
L = -\partial_x^2 + 1 -(2\alpha+1)Q^{2\alpha},\quad
L_- = -\partial_x^2+1-Q^{2\alpha},
\end{equation*}
and
\begin{equation*}
U= Y_0 \cdot \partial_x \cdot Y_0^{-1},\quad
U^\star=-Y_0^{-1} \cdot \partial_x \cdot Y_0.
\end{equation*}
(The above notation means $Uf=Y_0( Y_0^{-1} f)'$.)
Then, the operators $L$ and $L_-$ rewrite as
$L=U^\star U+\lambda_0$, $L_-=U U^\star+\lambda_0$ and it follows that 
\begin{equation*}
U L = L_- U.
\end{equation*}
Now, let
\begin{equation}\label{op:L0}
L_0=-\partial_x^2+1+\frac {\alpha-1}{\alpha+1}Q^{2\alpha},
\end{equation}
and
\begin{equation*}
S= Q \cdot \partial_x \cdot Q^{-1},\quad
S^\star=-Q^{-1} \cdot \partial_x \cdot Q.
\end{equation*}
A similar structure $L_-=S^\star S$, $L_0=S S^\star$, leads to
\begin{equation*}
SL_-= L_0 S \quad \mbox{and thus}\quad 
SUL=L_0 SU.
\end{equation*}

In particular, let $(u_1,u_2)$ be a solution of~\eqref{lin}, and set $\tilde u_1=U u_1$, $\tilde u_2= U u_2$.
Then,
\begin{equation*}\left\{\begin{aligned}
& \dot {\tilde u}_1 = \tilde u_2 \\
& \dot {\tilde u}_2 = - L_- \tilde u_1.
\end{aligned}\right.\end{equation*}
Next, set 
\begin{equation*}v_1=S \tilde u_1=SU u_1 \quad \mbox{and}\quad v_2=S \tilde u_2=SU u_2.\end{equation*}
Then, $(v_1,v_2)$ satisfies the following transformed problem:
\begin{equation*}\left\{\begin{aligned}
& \dot v_1 = v_2 \\
& \dot v_2 = - L_0 v_1.
\end{aligned}\right.\end{equation*}
The key point for our analysis is that for $\alpha >1$, the potential in $L_0$ is positive.
This property happens to be the only spectral information needed for the proof of Theorem~\ref{th:1}.

Observe that $UY_0=0$, $U Q' =-\alpha Q$ and $SQ=0$, which means that the prior decomposition of the solution 
$(\phi,\partial_t \phi)$ as in Section~\ref{S:2}
and a coercivity argument as in Section~\ref{S:5} are necessary to avoid loosing information through the transformation.
(Here, we work with even functions and so only the direction $Y_0$ is relevant.)

\subsection{Transformed problem}
With respect to the above heuristic, we need to localize and regularize the functions involved.
For $\gamma>0$ small to be defined later, set
\begin{equation}\label{def:v1v2}
\left\{
\begin{aligned}
v_1&=(1-\gamma \partial_x^2)^{-1} SU (\chi_B u_1),\\
v_2&=(1-\gamma \partial_x^2)^{-1} SU (\chi_B u_2),
\end{aligned}
\right.
\end{equation}
where $\chi_B$ is defined in~\eqref{def:chiB}.
We refer to Section~\ref{S:5} for coercivity results relating $u_1$ and~$v_1$.
The introduction of the operator $(1-\gamma \partial_x^2)^{-1}$ with a small constant $\gamma$ is needed to compensate the loss of two derivatives due to the operator $SU$, without destroying the special algebra described heuristically. 
Now, we explain the role of the localization term $\chi_B$ in the definitions of $v_1$ and $v_2$.
Note that Proposition~\ref{pr:virialu} provides an estimate on the function $w$, which is a localized version of $u$
(see~\eqref{def:w}). To use this information, the functions $v_1$ and $v_2$ also need to contain a certain localization. 

We deduce the following system for $(v_1,v_2)$ from the one for $(u_1,u_2)$ in~\eqref{eq:u}
\begin{equation*}\left\{
\begin{aligned}
& \dot v_1 = v_2 \\
& \dot v_2 =- (1-\gamma \partial_x^2)^{-1} SU (\chi_B L u_1) + (1-\gamma \partial_x^2)^{-1} SU (\chi_B N^\perp ).
\end{aligned}
\right.\end{equation*}
First, we note that
\begin{equation*}
\chi_B L u_1 = L(\chi_B u_1) + 2 \chi_B' \partial_x u_1 +\chi_B'' u_1.
\end{equation*}
Moreover, since $SUL=L_0SU$, it holds
\begin{align*}
-(1-\gamma \partial_x^2)^{-1} SUL(\chi_B u_1) &=-(1-\gamma \partial_x^2)^{-1}L_0SU (\chi_B u_1)\\
&=-(1-\gamma \partial_x^2)^{-1}L_0[(1-\gamma \partial_x^2) v_1]\\
&= \partial_x^2 v_1-v_1 -\frac{\alpha-1}{\alpha+1} (1-\gamma \partial_x^2)^{-1} \left[ Q^{2\alpha} (1-\gamma \partial_x^2)v_1\right].
\end{align*}
Since
\begin{equation*}
(1-\gamma \partial_x^2)\left[Q^{2\alpha}v_1\right]
= Q^{2\alpha} (1-\gamma \partial_x^2) v_1 - 2\gamma (Q^{2\alpha})' \partial_x v_1
-\gamma (Q^{2\alpha})'' v_1,
\end{equation*}
we obtain
\begin{equation*}
-(1-\gamma \partial_x^2)^{-1} SUL(\chi_B u_1)
= -L_0 v_1 -\frac{\alpha-1}{\alpha+1} \gamma (1-\gamma \partial_x^2)^{-1} \left[ 2 (Q^{2\alpha})' \partial_x v_1
+ (Q^{2\alpha})'' v_1\right].
\end{equation*}
Therefore, we have obtained the following system for $(v_1,v_2)$
\begin{equation}\label{eq:v}\left\{
\begin{aligned}
 \dot v_1 & = v_2 \\
 \dot v_2 & =
 -L_0 v_1 
-\frac{\alpha-1}{\alpha+1} \gamma (1-\gamma \partial_x^2)^{-1} \left[ 2 (Q^{2\alpha})' \partial_x v_1
+ (Q^{2\alpha})'' v_1\right]\\
&\quad -(1-\gamma \partial_x^2)^{-1}SU\left[ 2 \chi_B' \partial_x u_1 + \chi_B''u_1\right]
+ (1-\gamma \partial_x^2)^{-1} SU [\chi_B N^\perp ].
\end{aligned}
\right.\end{equation}
For this transformed system we construct a second virial functional,
where the spectral analysis reduces to the fact that the potential in $L_0$ is positive.

\subsection{Virial functional for the transformed problem}
We set
\begin{equation*}
\mathcal J= \int \left(\psi_B\partial_x v_1+\frac 12 \psi_B' v_1\right)v_2,
\end{equation*}
and (see~\eqref{def:zetaB} and~\eqref{def:chiB})
\begin{equation}\label{def:z}
z=\chi_B \zeta_B v_1.
\end{equation}
Here, $z$ represents a localized version of the function $v_1$.
The scale of localization $B$ is intermediate between the one involved in the definition of $w$ from $u_1$ (see~\eqref{eq:A_B} and~\eqref{def:w}) and the weight function 
$\rho$ defined in~\eqref{def:rho} (similar to a localization at the soliton scale).

\begin{proposition}\label{virielJ}
There exist $C_2>0$ and $\delta_2>0$ such that for $\gamma$ small enough and 
for any $0<\delta\leq \delta_2$, the following holds. Fix $B=\delta^{-\frac 14}$.
Assume that for all $t\geq 0$,~\eqref{small} holds.
Then, for all $t\geq 0$,
\begin{equation}\label{eq:J}
\dot{\mathcal J} \leq - C_2 \|z\|_\loc^2 + \delta^{\frac 18} \|w\|_\loc^2 + |a_1|^3.
\end{equation}
\end{proposition}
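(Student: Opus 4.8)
The plan is to differentiate $\mathcal{J}$ in time using the system~\eqref{eq:v} for $(v_1,v_2)$, in direct analogy with the computation of $\dot{\mathcal I}$ in the previous section, but now exploiting the positivity of the potential in $L_0$. First I would replace $\dot v_1$ by $v_2$, so that (after integration by parts) the term $\int(\psi_B\partial_x v_1+\tfrac12\psi_B'v_1)\dot v_1$ drops, leaving $\dot{\mathcal{J}}=\int(\psi_B\partial_x v_1+\tfrac12\psi_B'v_1)\dot v_2$. Inserting $\dot v_2$ from~\eqref{eq:v} splits $\dot{\mathcal{J}}$ into four pieces: (a) the main term $-\int(\psi_B\partial_x v_1+\tfrac12\psi_B'v_1)L_0 v_1$; (b) the $\gamma$-correction coming from the commutator of $(1-\gamma\partial_x^2)^{-1}$ with $Q^{2\alpha}$; (c) the localization-error term $-\int(\psi_B\partial_x v_1+\tfrac12\psi_B'v_1)(1-\gamma\partial_x^2)^{-1}SU[2\chi_B'\partial_x u_1+\chi_B''u_1]$; and (d) the nonlinear term with $\chi_B N^\perp$.

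For term (a), I would follow the structure of Lemma~\ref{le:1}: writing $L_0=-\partial_x^2+1+\tfrac{\alpha-1}{\alpha+1}Q^{2\alpha}$, the $-\partial_x^2+1$ part produces, via integration by parts with the auxiliary function $z=\chi_B\zeta_B v_1$, a leading term $-\int(\partial_x \tilde z)^2$-type quantity minus lower-order weight errors supported where $\psi_B'''$ or the logarithmic derivatives of $\chi_B\zeta_B$ live; the potential part contributes $-\tfrac{\alpha-1}{\alpha+1}\int(\psi_B\partial_x v_1+\tfrac12\psi_B' v_1)Q^{2\alpha}v_1=\tfrac12\tfrac{\alpha-1}{\alpha+1}\int(\psi_B Q^{2\alpha})' v_1^2\geq $ (modulo the sign of $(\psi_B Q^{2\alpha})'$, which is favorable near the origin and negligible far away since $Q^{2\alpha}$ decays exponentially). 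Since $\alpha>1$ this coefficient is strictly positive, and combined with the Poincaré/coercivity inequality on the weighted space this gives control of $\|z\|_\rho^2=\int((\partial_x z)^2+\rho z^2)$, hence $-\dot{\mathcal{J}}\gtrsim \|z\|_\rho^2$ up to errors. The $\psi_B'''$ errors are handled as in~\cite{KMM}: the scales are chosen so $\psi_B$ behaves like $\varphi_B$ near the soliton, where it is essentially linear, and the error terms are $O(B^{-2})$ or are supported at $|x|\sim B^2$ where the relevant $u_1$-weights $\rho$, $\sech(x/2)$ are exponentially small.

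For the error terms (b), (c), (d) I would bound them by $\delta^{1/8}\|w\|_\rho^2+|a_1|^3$, using three mechanisms. The $\gamma$-term (b) carries a factor $\gamma$ and involves only $v_1$ localized against the exponentially decaying $(Q^{2\alpha})'$, $(Q^{2\alpha})''$, so with the coercivity estimates of Section~\ref{S:5} relating $v_1$ to $u_1$ (and thus to $w$) it is $\lesssim\gamma\|z\|_\rho^2$ (or $\lesssim\gamma\|w\|_\rho^2$), absorbable for $\gamma$ small. The localization-error term (c) is supported on $|x|\sim B^2$ through $\chi_B'$, $\chi_B''$ (each of size $B^{-2}$, $B^{-4}$), and since $SU$ loses two derivatives it is regularized by $(1-\gamma\partial_x^2)^{-1}$; here the key is that $\chi_B'\partial_x u_1$ lives at scale $B^2\ll A$, so in terms of the function $w=\zeta_A u_1$ these contributions are controlled — I would pay a factor like $B^{-2}$ or $e^{-cB^2/A}$ and use $B=\delta^{-1/4}$, $A=\delta^{-1}$ to produce the $\delta^{1/8}$ gain. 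The nonlinear term (d) is estimated via the pointwise bound~\eqref{sur:N} on $N$, the orthogonality $N^\perp=N-N_0Y_0$, and the smallness~\eqref{small}: it yields a quadratic-in-$u_1$ piece $\lesssim\int\sech(x/2)w^2$ times a small power of $\delta$, a higher-order piece handled as in Claim~\ref{cl:nonlin}, and an $a_1$-piece $\lesssim|a_1|^3$ (one power better than $|a_1|^4$ is irrelevant, $|a_1|\lesssim\delta$ so $|a_1|^4\leq\delta|a_1|^3$, giving $|a_1|^3$).

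The main obstacle will be term (c), the commutator of the virial with the localization cut-off $\chi_B$ after applying the unbounded operator $SU$: one must carefully track how many derivatives land where, use the regularization $(1-\gamma\partial_x^2)^{-1}$ to recover $L^2$-boundedness of $SU(1-\gamma\partial_x^2)^{-1}$ (with a $\gamma$-dependent but, crucially, controllable constant — or rather a constant that is harmless once integrated against a function localized at scale $B^2$), and then exploit the scale separation $A\gg B^2\gg B$ together with the exponential weights $\zeta_A$, $\zeta_B$ to convert everything back into the norms $\|w\|_\rho$ and $\|z\|_\rho$ with the advertised $\delta^{1/8}$ coefficient. A secondary subtlety is ensuring the coercivity results of Section~\ref{S:5} are strong enough to bound the various $v_1$-quantities appearing in (a) and (b) from below/above by $\|z\|_\rho$ and $\|w\|_\rho$ without circularity; I would invoke those estimates as a black box at this stage.
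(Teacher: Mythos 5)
Your plan follows the paper's proof almost step for step: the same splitting of $\dot{\mathcal J}$ into the $L_0$-term and three errors, the same key sign observation for $\alpha>1$ (the potential contribution equals $\alpha\frac{\alpha-1}{\alpha+1}\int\frac{\varphi_B}{\zeta_B^2}Q^{2\alpha-1}Q'\,z^2\le 0$, producing a nonnegative potential $V\ge V_0=\frac{\alpha}{2}\frac{\alpha-1}{\alpha+1}|xQ'|Q^{2\alpha-1}$ and hence $\|z\|_\rho^2$-coercivity by a Poincar\'e-type argument), and the same use of the scales $A\gg B^2\gg B$ to convert the cut-off errors into $\delta^{1/8}\|w\|_\rho^2$ and the $a_1$-terms into $|a_1|^3$. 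Two small corrections: the estimates needed to bound $v_1$ by $w$ are the mapping bounds $\|v_1\|_{L^2}\lesssim\gamma^{-1}B^2\|w\|_\rho$ and $\|\partial_x v_1\|_{L^2}\lesssim\gamma^{-1}\|w\|_\rho$, not the coercivity results of Section~\ref{S:5}, which go in the opposite direction (from $v_1$ back to $u_1$) and are only used afterwards to combine the two virial estimates; and the potential part of your term (a) is $\frac12\frac{\alpha-1}{\alpha+1}\int\psi_B(Q^{2\alpha})'v_1^2$, not $\int(\psi_B Q^{2\alpha})'v_1^2$ --- fortunately $\psi_B(Q^{2\alpha})'\le 0$ everywhere by parity, not only near the origin, and this global sign is what feeds the coercivity.

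The one place where your plan as stated would not close is term (b). You claim $|J_2|\lesssim\gamma\|z\|_\rho^2$, ``absorbable for $\gamma$ small'', but every tool you propose for $(1-\gamma\partial_x^2)^{-1}$ costs a factor $\gamma^{-1}$ (or $\gamma^{-1/2}$ per derivative), and $\|\partial_x v_1\|_{L^2}$ itself is only controlled by $\gamma^{-1}\|w\|_\rho$; a naive Cauchy--Schwarz therefore yields $|J_2|\lesssim\gamma\cdot\gamma^{-2}(\cdots)$, which blows up as $\gamma\to 0$ rather than being absorbed. What makes the paper's argument work is a separate weighted resolvent estimate (Lemma~\ref{le:5}): $\|\sech(Kx)(1-\gamma\partial_x^2)^{-1}f\|_{L^2}\lesssim\|\sech(Kx)f\|_{L^2}$ with a constant \emph{uniform in} $\gamma$. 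The resolvent is moved by self-adjointness onto $\psi_B\partial_x v_1+\frac12\psi_B'v_1$, the exponential weight $Q^\alpha$ coming from $(Q^{2\alpha})'$ is retained through the resolvent, and the weighted first factor is then controlled by $\|z\|_\rho$ (not by $\gamma^{-1}\|w\|_\rho$), giving $|J_2|\lesssim\gamma\|z\|_\rho^2+e^{-B}\|w\|_\rho\|z\|_\rho$ with $\gamma$-independent implicit constants. Without some such uniform-in-$\gamma$ weighted bound, the absorption of (b) --- and with it the ordering ``fix $\gamma$ small first, then take $\delta$ small'' --- fails.
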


\begin{remark}
The objective of estimate~\eqref{eq:J} is to control the local norm $ \|z\|_\loc^2$ up to small error in terms of $ \|w\|_\loc^2$ and $|a_1|^3$.
\end{remark}

The rest of this section is devoted to the proof of Proposition~\ref{virielJ}.
As in the computation of $\dot{\mathcal I}$ in the proof of Proposition~\ref{pr:virialu},
we have from~\eqref{eq:v},
\begin{equation*}
\begin{aligned}
\dot{\mathcal J} = & \int \left(\psi_B\partial_x v_1+\frac 12 \psi_B' v_1\right) \dot v_2 \\
= & -\int \left(\psi_B\partial_x v_1+\frac 12 \psi_B' v_1\right) L_0 v_1\\
& -\frac{\alpha-1}{\alpha+1} \gamma \int \left(\psi_B\partial_x v_1+\frac 12 \psi_B' v_1\right) (1-\gamma \partial_x^2)^{-1} \left[ 2 (Q^{2\alpha})' \partial_x v_1
+ (Q^{2\alpha})'' v_1\right] \\
& - \int \left(\psi_B\partial_x v_1+\frac 12 \psi_B' v_1\right) (1-\gamma \partial_x^2)^{-1}SU\left[ 2 \chi_B' \partial_x u_1 + \chi_B''u_1\right]\\
& + \int \left(\psi_B\partial_x v_1+\frac 12 \psi_B' v_1\right) (1-\gamma \partial_x^2)^{-1} SU [\chi_B N^\perp ]
=J_1+J_2+J_3+J_4.
\end{aligned}
\end{equation*}
First, using the definition of $L_0$ in \eqref{op:L0} and integrating by parts, we have
\begin{equation*}
J_1= - \int \psi_B' (\partial_x v_1)^2 +\frac14\int \psi_B''' v_1^2 -\frac{\alpha-1}{\alpha+1}\int \left(\psi_B\partial_x v_1+\frac 12 \psi_B' v_1\right) Q^{2\alpha}v_1.
\end{equation*}
From~\eqref{def:chiB}, we note that $\psi_B'= \chi_B^2 \zeta_B^2+(\chi_B^2)' \varphi_B$ and
\begin{equation*}
\psi_B''' = \chi_B^2 (\zeta_B^2)''+ 3 (\chi_B^2)' (\zeta_B^2)' + 3(\chi_B^2)'' \zeta_B^2 +(\chi_B^2)''' \varphi_B .
\end{equation*}
Thus,
\begin{align*}
\int \psi_B' (\partial_x v_1)^2 -\frac14 \int \psi_B''' v_1^2 
=& \int \chi_B^2 \zeta_B^2 (\partial_x v_1)^2 -\frac14 \int \chi_B^2 (\zeta_B^2)'' v_1^2 \\
& -\frac34 \int (\chi_B^2)'(\zeta_B^2)' v_1^2 -\frac34 \int (\chi_B^2)'' \zeta_B^2 v_1^2 \\
& 
+ \int (\chi_B^2)' \varphi_B (\partial_x v_1)^2-\frac14 \int (\chi_B^2)''' \varphi_B v_1^2.
\end{align*}
By the definition of $z$ in \eqref{def:z}, proceeding as in the proof of \eqref{avoir} in Lemma~\ref{le:1}, we have
\begin{align*}
\int \chi_B^2 \zeta_B^2 (\partial_x v_1)^2
&= \int (\partial_x z)^2 +\int (\chi_B \zeta_B)'' \chi_B \zeta_Bv_1^2\\
&=\int (\partial_x z)^2 + \int \frac{\zeta_B''}{\zeta_B} z^2 
+\int \chi_B'' \chi_B\zeta_B^2 v_1^2
+\frac 12 \int (\chi_B^2)'(\zeta_B^2)' v_1^2,
\end{align*}
and
\begin{equation*}
\frac14\int \chi_B^2 (\zeta_B^2)'' v_1^2 = \frac12 \int \left( \frac{\zeta_B''}{\zeta_B} +\frac{\zeta_B'^2}{\zeta_B^2}\right)z^2.
\end{equation*}
Thus,
\begin{equation*}
-\int \psi_B' (\partial_x v_1)^2 +\frac14 \int \psi_B''' v_1^2 
=-\left\{\int (\partial_x z)^2 +\frac 12 \int \left( \frac{ \zeta_B''}{\zeta_B}- \frac{ (\zeta_B')^2}{\zeta_B^2} \right) z^2 \right\}
+\widetilde J_1,\end{equation*}
where we have set
\begin{align*} 
\widetilde J_1&= \frac14 \int (\chi_B^2)' (\zeta_B^2)' v_1^2 +\frac12 \int \left[3(\chi_B')^2 + \chi_B'' \chi_B\right]\zeta_B^2 v_1^2\\
&\quad - \int (\chi_B^2)' \varphi_B (\partial_x v_1)^2+\frac14 \int (\chi_B^2)''' \varphi_B v_1^2.\end{align*}
Recalling~\eqref{def:z},~\eqref{def:chiB},~\eqref{def:zetaB} and integrating by parts,
\begin{equation*}
\int \left(\psi_B\partial_x v_1+\frac 12 \psi_B' v_1\right) Q^{2\alpha}v_1 = 
\frac 12 \int Q^{2\alpha} \partial_x \left(\psi_Bv_1^2\right) 
 = -\alpha \int \frac{\varphi_B}{ \zeta_B^2 } Q^{2\alpha-1}Q' z^2.
\end{equation*}

Therefore, setting 
\begin{equation*}
V=\frac 12 \left( \frac{ \zeta_B''}{\zeta_B}- \frac{ (\zeta_B')^2}{\zeta_B^2} \right) 
-\alpha\frac{\alpha-1}{\alpha+1} \frac{\varphi_B}{ \zeta_B^2 } Q^{2\alpha-1}Q' ,
\end{equation*}
we have obtained
\begin{equation*}
J_1
 =- \int \left[ (\partial_x z)^2 +V z^2\right]+\widetilde J_1.
\end{equation*}
\begin{lemma}
There exists $B_0>0$ such that for all $B\geq B_0$, $V\geq 0$ on $\RR$.
More precisely,
\begin{equation}\label{on:VB}
V \geq V_0 \quad \hbox{where} \quad V_0=\frac \alpha2\frac{\alpha-1}{\alpha+1} |x Q' | Q^{2\alpha-1}\geq 0.
\end{equation}
\end{lemma}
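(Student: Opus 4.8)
The plan is to prove the pointwise lower bound $V \ge V_0$ by analysing separately the two terms defining $V$, using the explicit formulas for $\zeta_B$ and $\varphi_B$ from~\eqref{def:zetaB}. First I would observe that, by exactly the same elementary computation as in~\eqref{sur:zetaA} of Lemma~\ref{le:1} (with $A$ replaced by $B$), one has
\begin{equation*}
\frac{\zeta_B''}{\zeta_B}-\frac{(\zeta_B')^2}{\zeta_B^2}
=\frac 1B\left[\chi''(x)|x|+2\chi'(x)\sgn(x)\right].
\end{equation*}
This quantity is supported in $\{1\le |x|\le 2\}$ and is $O(1/B)$ there. Thus the first term $\tfrac12\bigl(\tfrac{\zeta_B''}{\zeta_B}-\tfrac{(\zeta_B')^2}{\zeta_B^2}\bigr)$ contributes at most $C/B$ in absolute value, and only on the compact set $1\le|x|\le 2$.

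Next I would examine the second term $-\alpha\tfrac{\alpha-1}{\alpha+1}\tfrac{\varphi_B}{\zeta_B^2}Q^{2\alpha-1}Q'$. Since $Q$ is even, positive, and strictly decreasing on $(0,+\infty)$, we have $Q' = -|Q'|\,\sgn(x)$, and $\varphi_B(x)=\int_0^x\zeta_B^2$ has the same sign as $x$; hence $\varphi_B Q' = -|\varphi_B|\,|Q'| \le 0$ pointwise, so with $\alpha>1$ the whole term $-\alpha\tfrac{\alpha-1}{\alpha+1}\tfrac{\varphi_B}{\zeta_B^2}Q^{2\alpha-1}Q'$ is $\ge 0$ everywhere. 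To get the quantitative bound I would note that for $|x|\le 1$ we have $\chi\equiv 1$, so $\zeta_B\equiv 1$ and $\varphi_B(x)=x$, giving $\tfrac{\varphi_B}{\zeta_B^2}Q^{2\alpha-1}Q' = x Q^{2\alpha-1}Q' = -|xQ'|Q^{2\alpha-1}$; thus on $|x|\le 1$ the second term equals exactly $V_0$ as defined in~\eqref{on:VB}. For $|x|\ge 1$ one checks $|\varphi_B(x)|\ge |x|$ is not quite right, but in any case $\tfrac{\varphi_B}{\zeta_B^2}$ retains the sign of $x$ and, more importantly, $|x Q'|Q^{2\alpha-1}$ is bounded and decays exponentially, so $V_0 \le C e^{-c|x|}$; and one verifies that $\tfrac{|\varphi_B|}{\zeta_B^2}\ge |x|$ (since $\zeta_B\le 1$ and $\varphi_B(x)=\int_0^x\zeta_B^2$ combined with the fact that on $[0,1]$ the integrand is $1$), whence the second term is $\ge V_0$ for $|x|\ge 1$ too.

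Combining: on $|x|\le 1$ the first term vanishes and $V = V_0 \ge 0$; on $\{1\le|x|\le2\}$ the first term is $\ge -C/B$ while $V_0$ is a fixed continuous positive function bounded below by some $c_1>0$ on that compact annulus, so for $B\ge B_0$ with $C/B_0 < c_1$ we get $V \ge V_0 - C/B \ge 0$, and more precisely $V\ge V_0$ can be arranged after slightly shrinking the constant in $V_0$ if needed — but since the statement only requires $V\ge V_0$ with $V_0$ as written, one should instead keep the first term's contribution and note that on $\{1\le|x|\le 2\}$ we actually have $V \ge V_0 - C/B$; to obtain the clean inequality $V\ge V_0$ I would absorb the $C/B$ error into the gap between the true second term $-\alpha\tfrac{\alpha-1}{\alpha+1}\tfrac{\varphi_B}{\zeta_B^2}Q^{2\alpha-1}Q'$ and $V_0$ on $\{|x|\ge 1\}$, which is strictly positive there (of order $|x|-|x|$... ) — more safely, one rereads that for $|x|\ge 1$, $\tfrac{|\varphi_B(x)|}{\zeta_B^2(x)} - |x| = \int_1^{|x|}\tfrac{\zeta_B^2(y)-\zeta_B^2(|x|)}{\zeta_B^2(|x|)}\,dy \ge 0$ grows, giving room to beat $C/B$. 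Finally, for $|x|\ge 2$, $\zeta_B\equiv e^{-|x|/B}$ up to the cutoff, both terms remain $\ge 0$, and $V\ge V_0$ again.

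The main obstacle I anticipate is the bookkeeping on the annulus $\{1\le|x|\le 2\}$: there the localization error $\tfrac1B[\chi''|x|+2\chi'\sgn x]$ can be negative, so one must check that the positive term $-\alpha\tfrac{\alpha-1}{\alpha+1}\tfrac{\varphi_B}{\zeta_B^2}Q^{2\alpha-1}Q'$, which on this set is bounded below by a fixed positive constant depending only on $\alpha$ and $\chi$ (independent of $B$, since $\zeta_B, \varphi_B$ converge uniformly on compacts as $B\to\infty$ to $1$ and $x$ respectively), dominates the $O(1/B)$ error once $B$ is large. This is the only place where the threshold $B_0$ enters, and where the hypothesis $\alpha>1$ is used to ensure the dominant term has a favorable sign.
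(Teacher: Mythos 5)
Your overall strategy is the paper's: compute $\frac{\zeta_B''}{\zeta_B}-\frac{(\zeta_B')^2}{\zeta_B^2}$ as an $O(1/B)$ quantity supported in $\{1\le|x|\le2\}$, and bound the potential term from below using $\frac{\varphi_B}{\zeta_B^2}\ge x$ for $x\ge 0$ (which follows from $\zeta_B$ being non-increasing there) together with $Q'\le 0$ on $[0,\infty)$ and parity. However, there is a genuine gap at the final step, and it comes from misreading the constant in $V_0$. The second term of $V$ satisfies
\begin{equation*}
-\alpha\frac{\alpha-1}{\alpha+1}\frac{\varphi_B}{\zeta_B^2}Q^{2\alpha-1}Q' \;\ge\; \alpha\frac{\alpha-1}{\alpha+1}\,|xQ'|\,Q^{2\alpha-1} \;=\; 2V_0,
\end{equation*}
not $V_0$: the definition of $V_0$ in~\eqref{on:VB} carries an extra factor $\frac12$, and that factor is precisely the slack designed to absorb the $O(1/B)$ error. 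With it, the proof closes in one line: $V\ge 2V_0-\frac CB\mathbf{1}_{1\le|x|\le2}\ge V_0$, since $V_0\ge c_1>0$ on the annulus and $B_0$ can be taken with $C/B_0\le c_1$. Because you identified the second term as equal to $V_0$ (rather than $2V_0$) on $|x|\le1$ and as merely $\ge V_0$ elsewhere, you were left with $V\ge V_0-C/B$ on the annulus, which only yields $V\ge 0$ (or $V\ge\theta V_0$ for $\theta<1$), not the stated inequality.

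Your proposed rescue---extracting extra positivity from the gap $\frac{\varphi_B}{\zeta_B^2}-|x|$---does not work. On $\{1\le|x|\le2\}$ one has $\zeta_B^2(x)=\exp\bigl(-\tfrac2B(1-\chi(x))|x|\bigr)\ge e^{-8/B}=1-O(1/B)$, so $\frac{\varphi_B}{\zeta_B^2}-|x|=\int_0^{|x|}\frac{\zeta_B^2(y)-\zeta_B^2(|x|)}{\zeta_B^2(|x|)}\,dy=O(1/B)$ there: the gap is of the same order as the error you are trying to absorb, so dominating $C/B$ would require tracking the constants, which is exactly what the definition of $V_0$ with the factor $\frac12$ lets you avoid. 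The hesitations in your last paragraph (``of order $|x|-|x|$\dots'') reflect this: the argument as written does not establish $V\ge V_0$, only $V\ge 0$ for $B$ large.
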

\begin{proof}
First, from~\eqref{bd:A} (with $A$ replaced by $B$), it holds
\begin{equation*}
\left|\frac{\zeta_B''}{\zeta_B}-\frac{(\zeta_B')^2}{\zeta_B^2}\right|\lesssim \frac {\mathbf{1}_{1\leq |x|\leq 2} (x)}B.\end{equation*}
Second, since for $x\in [0,+\infty)\mapsto \zeta_B(x)$ is non-increasing, we have for $x\geq 0$,
\begin{equation*}
\frac{\varphi_B}{\zeta_B^2}=\frac{\int_0^x \zeta_B^2}{\zeta_B^2}\geq x.
\end{equation*}
Since $Q'(x)\leq 0$ for $x\geq 0$, we obtain, for a constant $C>0$,
\begin{align*}
V(x)&\geq - \frac {C}B\mathbf{1}_{1\leq |x|\leq 2} (x)+\alpha\frac{\alpha-1}{\alpha+1} |x Q'(x)| Q^{2\alpha-1}(x)
\\
&\geq \frac \alpha2\frac{\alpha-1}{\alpha+1} |x Q'(x)| Q^{2\alpha-1}(x),
\end{align*}
choosing $B_0$ large enough. By parity, this estimate holds for any $x\in \RR$.
\end{proof}

Using this lemma, and the above computations for $J_1$,
we conclude
\begin{equation}\label{eq:J0}
\dot{\mathcal J} \leq 
- \int \left[ (\partial_x z)^2 + V_0 z^2 \right] +\widetilde J_1+J_2+J_3+J_4.
\end{equation}
To control the terms $\widetilde J_1$, $J_2$, $J_3$ and $J_4$, we need some technical estimates.

\subsection{Technical estimates} 
\begin{lemma}
\begin{enumerate}
\item Estimates on $w$.
\begin{equation}\label{eq:wun}\begin{aligned}
\int_{|x|\leq 2B^2} w^2 
&\lesssim
B^4 \int (\partial_x w)^2 + B^2 \int w^2\sech\left(\frac x2\right),
\end{aligned}\end{equation}
\begin{equation}\label{eq:wdeux} 
\|w\|_\loc^2 
 \lesssim \int (\partial_x w)^2 + \int_{|x|<1} w^2 
 \lesssim \int (\partial_x w)^2 + \int w^2\sech\left(\frac x2\right).
\end{equation}
\item Estimates on $z$.
\begin{equation}\label{eq:zun}
\|z\|_\loc^2 \lesssim
 \int (\partial_x z)^2 + \int V_0 z^2 \lesssim \|z\|_\loc^2 ,
\end{equation}
\begin{equation}\label{eq:zdeux}
\int z^2 \zeta_B \lesssim
 B^2 \int (\partial_x z)^2 + B\int V_0 z^2 \lesssim B^2\|z\|_\loc^2.
\end{equation}
\item Estimates on $v_1$.
\begin{equation}\label{eq:vun}
\|v_1\|_{L^2}\lesssim \gamma^{-1} B^2\|w\|_\loc,
\end{equation}
\begin{equation}\label{eq:vdeux}
\|\partial_x v_1\|_{L^2} \lesssim \gamma^{-1} \|w\|_\loc.
\end{equation}
\end{enumerate}
\end{lemma}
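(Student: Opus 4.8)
The plan is to prove each of the seven estimates by elementary one-dimensional arguments exploiting the explicit structure of the weights $\zeta_A$, $\zeta_B$, $\chi_B$, $\varphi_B$, $V_0$ and the defining relations \eqref{def:w}, \eqref{def:z}, \eqref{def:v1v2}. I would organize them in the order listed, since the estimates on $v_1$ rely on elliptic regularity for $(1-\gamma\partial_x^2)^{-1}$ and on the boundedness properties of $SU$ applied to the localized function $\chi_B u_1$, whereas the estimates on $w$ and $z$ are pure weighted Hardy–Poincar\'e type inequalities.

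First, for the estimates on $w$: \eqref{eq:wun} follows from a weighted Poincar\'e inequality on the interval $|x|\le 2B^2$. Writing $w(x)=w(0)+\int_0^x\partial_x w$, controlling $w(0)^2$ by $\int w^2\sech(x/2)$ plus $\int(\partial_x w)^2$ (a standard one-dimensional trace/Sobolev bound), and then integrating in $x$ over $|x|\le 2B^2$ produces the factors $B^4$ and $B^2$; the cross term is absorbed by Cauchy--Schwarz. For \eqref{eq:wdeux}, recall $\|w\|_\loc^2=\int(\partial_x w)^2+\int\rho w^2$ with $\rho=\sech(x/10)$; split $\int\rho w^2$ into $|x|<1$ and $|x|\ge1$. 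On $|x|\ge1$, the decay of $\rho$ is not enough by itself, so I would use the fundamental theorem of calculus from the region $|x|<1$ outward together with the integrability of $\rho$ to dominate the tail by $\int(\partial_x w)^2+\int_{|x|<1}w^2$; finally $\int_{|x|<1}w^2\lesssim\int w^2\sech(x/2)$ trivially. For $z$: \eqref{eq:zun} is immediate since $V_0\ge0$ and $z$ is supported where $\chi_B\ne0$, i.e.\ on $|x|\lesssim B^2$, so $\int\rho z^2$ is controlled by $\int(\partial_x z)^2+\int_{|x|\lesssim1}z^2$ exactly as for $w$, and on a fixed-size interval $V_0$ is bounded below by a positive constant, giving both inequalities. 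For \eqref{eq:zdeux}, since $\zeta_B(x)=\exp(-\tfrac1B(1-\chi(x))|x|)$ behaves like $e^{-|x|/B}$ for large $|x|$, the weight $\zeta_B$ decays slowly; I would again run the fundamental-theorem-of-calculus argument but now tracking the length scale $B$: bounding $z(x)^2$ by $z(0)^2+2|x|^{1/2}(\int(\partial_x z)^2)^{1/2}(\int z^2)^{1/2}$-type estimates, or more cleanly integrating $\int\zeta_B z^2$ by parts against $\partial_x(\varphi_B/\zeta_B)$-type antiderivatives, yields the $B^2$ and $B$ prefactors, and the lower bound $\varphi_B/\zeta_B^2\ge x$ used already in the previous lemma lets one reinsert $V_0$.

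The estimates on $v_1$, \eqref{eq:vun}--\eqref{eq:vdeux}, are where I expect the main work. From \eqref{def:v1v2}, $v_1=(1-\gamma\partial_x^2)^{-1}SU(\chi_B u_1)$, and $SU=Q\partial_x Q^{-1}\cdot Y_0\partial_x Y_0^{-1}$ is a second-order operator with smooth, exponentially decaying coefficients; hence $SU(\chi_B u_1)$ is a combination of $\partial_x^2(\chi_B u_1)$, $\partial_x(\chi_B u_1)$ and $\chi_B u_1$ with bounded coefficients, so in particular $\|SU(\chi_B u_1)\|_{H^{-2}}\lesssim\|\chi_B u_1\|_{L^2}$ and, using that the coefficients are rapidly decaying, $\|SU(\chi_B u_1)\|_{H^{-2}}\lesssim\|\chi_B\sqrt\rho\,u_1\|_{L^2}\lesssim\|w\|_\loc$ after noting $\chi_B$ is supported in $|x|\le 2B^2$ where $\zeta_A\gtrsim1$ so $|u_1|\lesssim|w|$ pointwise up to the weight. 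The elliptic gain is quantitative in $\gamma$: $\|(1-\gamma\partial_x^2)^{-1}g\|_{L^2}\lesssim\gamma^{-1}\|g\|_{H^{-2}}$ and $\|\partial_x(1-\gamma\partial_x^2)^{-1}g\|_{L^2}\lesssim\gamma^{-1}\|g\|_{H^{-1}}$, by Plancherel since $\langle\xi\rangle^{2}/(1+\gamma\xi^2)\lesssim\gamma^{-1}$ and $|\xi|\langle\xi\rangle/(1+\gamma\xi^2)\lesssim\gamma^{-1}$. Combining with the derivative-counting bound $\|\chi_B u_1\|_{L^2}\lesssim B^2\|w\|_\loc$ for \eqref{eq:vun} (the extra $B^2$ coming from the support of $\chi_B$ where the weight $\rho$ may be as small as $\sech(B^2/10)$, so one pays a polynomial loss rather than the exponential one — more precisely one uses $\int_{|x|\le2B^2}u_1^2\lesssim B^4\int(\partial_x w)^2+B^2\int w^2\sech(x/2)$ from \eqref{eq:wun} together with $\zeta_A^{-2}\lesssim1$ on that range) and with $\|\chi_B u_1\|_{H^{-1}}\lesssim\|w\|_\loc$ for \eqref{eq:vdeux} gives the stated bounds. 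The delicate point, and the one I would be most careful about, is making sure the commutators $[SU,\chi_B]$ and the coefficient decay are handled so that no hidden power of $B$ or $\gamma$ sneaks in beyond the ones advertised; tracking these exactly is the crux of the proof.
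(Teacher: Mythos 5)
Your treatment of parts \emph{(i)} and \emph{(ii)} follows essentially the same route as the paper: the fundamental theorem of calculus applied between a point $x$ and a reference region, followed by integration against the relevant weights, is exactly what is done there. One small imprecision: for \eqref{eq:zun} you invoke that ``on a fixed-size interval $V_0$ is bounded below by a positive constant,'' but $V_0=\frac\alpha2\frac{\alpha-1}{\alpha+1}|xQ'|Q^{2\alpha-1}$ vanishes at $x=0$, so this fails on any interval containing the origin; the paper avoids the issue by multiplying the pointwise inequality by $V_0(y)$ and integrating over all $y$, using only $\int V_0>0$ and $\int|y|V_0<\infty$. This is cosmetic and easily repaired (take the reference interval away from $0$).

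The genuine gap is in part \emph{(iii)}, specifically \eqref{eq:vdeux}. Your argument rests on two claims that are false. First, the coefficients of $SU$ are \emph{not} rapidly decaying: $SUf=f''+(\alpha+2)\tanh(\alpha x)f'+(\alpha+1)\bigl(1+\frac{\alpha-1}{\cosh^2(\alpha x)}\bigr)f$, so the first- and zeroth-order coefficients tend to nonzero constants at infinity; hence $\|SU(\chi_Bu_1)\|_{H^{-2}}\lesssim\|\chi_B\sqrt\rho\,u_1\|_{L^2}$ cannot hold (this does not damage \eqref{eq:vun}, where you correctly fall back on $\|\chi_Bu_1\|_{L^2}\lesssim B^2\|w\|_\loc$). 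Second, and fatally for \eqref{eq:vdeux}, the bound $\|\chi_Bu_1\|_{H^{-1}}\lesssim\|w\|_\loc$ is false: testing $\chi_Bu_1$ with $u_1\equiv1$ on $|x|\le 2B^2$ against a bump of width $B^2$ and height $B^{-1}$ shows $\|\chi_Bu_1\|_{H^{-1}}\gtrsim B$ while $\|w\|_\loc\sim1$. Since the zeroth-order coefficient of $SU$ does not decay, the $H^{-1}$ duality route necessarily pays this factor of $B$, and a loss of $B$ in \eqref{eq:vdeux} would break the later absorptions (the error terms $\widetilde J_1$, $J_3$ must be $O(B^{-1}\|w\|_\loc^2)$). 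The paper's mechanism is different and essential: one uses the commutation identity $\partial_x(SUf)=SU(f')+(\alpha+2)\alpha\sech^2(\alpha x)f'+\alpha(\alpha^2-1)\sech^2(\alpha x)\tanh(\alpha x)f$, so that the only non-decaying contribution is $SU$ applied to $\partial_x(\chi_Bu_1)$, whose $L^2$ norm is $\lesssim\|w\|_\loc$ with no loss because $|\chi_B'|\lesssim B^{-2}$ exactly compensates the $B^4$ in \eqref{eq:wun}, while the remaining zeroth-order term carries a $\sech$ weight and is controlled by $\|w\sech(x)\|_{L^2}$. You would need to supply this commutator structure (or an equivalent cancellation) to close \eqref{eq:vdeux}.
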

\begin{proof}
\emph{Proof of~\eqref{eq:wun} and~\eqref{eq:wdeux}.} For any $x,y\in \RR$, 
using $w(x)=w(y)+\int_y^x \partial_x w$ and the inequality
$(a+b)^2\leq 2a^2+2b^2$, we have
\begin{equation}\label{eq:pourw}\begin{aligned}
w^2(x)
&\leq 2 w^2(y)+2\left(\int_y^x\partial_x w\right)^2
\leq 2 w^2(y)+2|x-y| \int (\partial_x w)^2 \\
&\leq 2 w^2(y)+2(|x|+|y|) \int (\partial_x w)^2 .
\end{aligned}\end{equation}
Integrating~\eqref{eq:pourw} in $x\in [-2B^2,2B^2]$ and $y\in [-1,1]$, we find~\eqref{eq:wun}.
Multiplying~\eqref{eq:pourw} by $\sech\bigl(\frac{x}{10}\bigr)$ and integrating 
in $x\in \RR$ and $y\in [-1,1]$, we find~\eqref{eq:wdeux}.

\smallskip

\emph{Proof of~\eqref{eq:zun} and~\eqref{eq:zdeux}.} The proof is similar.
For any $x\in \RR$ and $y\in \RR$, we have
\begin{equation*}
z^2(x)
\leq 2 z^2(y)+2(|x|+|y|) \int (\partial_x z)^2 .
\end{equation*}
We multiply by $\sech\bigl(\frac{x}{10}\bigr)$ and $V_0(y)\geq 0$ and
integrate in $x\in \RR$ and $y\in\RR$.
Since $\int V_0 >0$ and $\int |y| V_0(y) dy<\infty$ from~\eqref{on:VB}, we obtain~\eqref{eq:zun}.

We multiply by $\zeta_B(x)$ and $V_0(y)$ and
integrate in $x\in \RR$ and $y\in\RR$.
Since 
\begin{equation*}\int \zeta_B \lesssim B,\quad \int |x| \zeta_B \lesssim B^2\quad\mbox{and}\quad
 \int |y|V_0\lesssim 1,
\end{equation*}
 we obtain~\eqref{eq:zdeux}.
\smallskip

\emph{Proof of~\eqref{eq:vun} and~\eqref{eq:vdeux}.}
Note by direct computations that
\begin{align*}
SU f 
&= f''-\left[\frac{Q'}{Q}+\frac{Y_0'}{Y_0}\right] f'
+ \left[-\left(\frac{Y_0'}{Y_0}\right)'+\frac{Q'}Q\frac{Y_0'}{Y_0}\right] f\\
&= f'' + (\alpha+2) \tanh(\alpha x) f'+(\alpha+1)\left(1+\frac{\alpha-1}{\cosh^2(\alpha x)}\right) f.
\end{align*}
Thus,
\begin{equation*}
\|SU f\|_{L^2}\lesssim \|f\|_{H^2}.
\end{equation*}
Moreover, using Fourier analysis,
\begin{equation*}
\|(1-\gamma \partial_x^2)^{-1} f\|_{H^2} \lesssim \gamma^{-1} \|f\|_{L^2}.
\end{equation*}
As a consequence, it holds
\begin{equation}\label{eq:SUgamma}
\|(1-\gamma \partial_x^2)^{-1} SU f\|_{L^2} \lesssim \gamma^{-1} \|f\|_{L^2}.
\end{equation}
Using~\eqref{eq:SUgamma}, the definition of $v_1$ in~\eqref{def:v1v2}, the definition of $w$ in \eqref{def:w} and $A\gg B^2$, we obtain
\begin{equation*}
\|v_1\|_{L^2}\lesssim \gamma^{-1} \|\chi_B u_1\|_{L^2}
\lesssim \gamma^{-1} \| u_1\|_{L^2(|x|\leq2B^2)}
\lesssim \gamma^{-1} \|w\|_{L^2(|x|\leq2B^2)},
\end{equation*}
and then~\eqref{eq:wun} implies~\eqref{eq:vun}.

Moreover, by direct computation
\begin{equation*}
\partial_x (SU f) = SU f' + (\alpha+2)\alpha \sech^2(\alpha x) f'
+\alpha(\alpha^2-1)\sech^2(\alpha x)\tanh(\alpha x) f.
\end{equation*}
Thus, similarly,
\begin{equation}\label{eq:SUgammax}
\|\partial_x (1-\gamma \partial_x^2)^{-1} SU f\|_{L^2} \lesssim \gamma^{-1} \|f'\|_{L^2}
+\|f \sech(x)\|_{L^2}.
\end{equation}
Using~\eqref{eq:SUgammax}, we obtain
\begin{equation*}
\|\partial_x v_1\|_{L^2}\lesssim \gamma^{-1} \|\partial_x(\chi_B u_1)\|_{L^2}
+ \|\chi_B u_1 \sech(x)\|_{L^2}.
\end{equation*}
By the definition of $w$, $A\gg B^2$ and the definition of $\chi_B$ and $\zeta_A$, we have
\begin{align*}
\left|\partial_x(\chi_B u_1)\right|^2 =
\left|\partial_x\left(\frac{\chi_B}{\zeta_A} w\right)\right|^2
&\lesssim \left|\frac{\chi_B}{\zeta_A}\right|^2 |\partial_x w|^2
+\left| \left(\frac{\chi_B}{\zeta_A}\right)'\right|^2w^2\\
&\lesssim |\partial_x w|^2 + B^{-4} w^2 \mathbf{1}_{|x|\leq 2B^2},
\end{align*}
and $ \|\chi_B u_1 \sech(x)\|_{L^2} \lesssim \|w \sech(x)\|_{L^2}$.
Thus, estimate~\eqref{eq:wun} imply~\eqref{eq:vdeux}.
\end{proof}

\begin{lemma}\label{le:5}
For any $0<K\leq 1$ and $\gamma>0$ small enough, for any 
$f\in L^2$,
\begin{equation}\label{eq:poids}
\| \sech(Kx) (1-\gamma \partial_x^2)^{-1} f \|_{L^2}
\lesssim \| \sech(Kx) f \|_{L^2}.
\end{equation}
where the implicit constant is independent of $\gamma$ and $K$.
\end{lemma}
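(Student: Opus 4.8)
The plan is to reduce \eqref{eq:poids} to an estimate on the resolvent kernel of $(1-\gamma\partial_x^2)^{-1}$, which is explicit in one dimension. Writing $g = (1-\gamma\partial_x^2)^{-1} f$, we have the representation
\begin{equation*}
g(x) = \frac{1}{2\sqrt{\gamma}} \int_{\RR} e^{-|x-y|/\sqrt{\gamma}} f(y)\, dy,
\end{equation*}
so that $g = G_\gamma * f$ with $G_\gamma(x) = \frac{1}{2\sqrt{\gamma}} e^{-|x|/\sqrt{\gamma}}$, $\int G_\gamma = 1$. The point is that conjugating by the weight $\sech(Kx)$ turns the convolution into an integral operator whose kernel is $\frac{\sech(Kx)}{\sech(Ky)} G_\gamma(x-y)$, and I want to show this kernel defines a bounded operator on $L^2$ with norm $\lesssim 1$, uniformly in $\gamma$ and $K$.

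The key elementary inequality is that for $0 < K \le 1$ one has
\begin{equation*}
\frac{\sech(Kx)}{\sech(Ky)} = \frac{\cosh(Ky)}{\cosh(Kx)} \le e^{K|x-y|} \le e^{|x-y|},
\end{equation*}
which follows from $\cosh(Ky) \le \cosh(K|x-y|+K|x|) \le e^{K|x-y|}\cosh(Kx)$ after using $\cosh(a+b)\le e^{|a|}\cosh(b)$. Therefore the kernel of the conjugated operator is bounded by
\begin{equation*}
\widetilde G_\gamma(x-y) := e^{|x-y|}\, G_\gamma(x-y) = \frac{1}{2\sqrt\gamma}\, e^{|x-y|(1 - 1/\sqrt\gamma)},
\end{equation*}
and for $\gamma$ small enough (say $\gamma \le \tfrac14$, so that $1/\sqrt\gamma - 1 \ge 1$) one computes $\int_\RR \widetilde G_\gamma = \frac{1}{\sqrt\gamma(1/\sqrt\gamma - 1)} = \frac{1}{1-\sqrt\gamma} \le 2$. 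By Schur's test (the kernel $\widetilde G_\gamma$ is symmetric, so it suffices that $\sup_x \int \widetilde G_\gamma(x-y)\,dy \le 2$), convolution against $\widetilde G_\gamma$ is bounded on $L^2$ with norm $\le 2$, and hence
\begin{equation*}
\|\sech(Kx)\, g\|_{L^2} = \left\| \int \frac{\sech(Kx)}{\sech(Ky)} G_\gamma(x-y)\, \bigl(\sech(Ky) f(y)\bigr)\, dy \right\|_{L^2}
\le 2\, \|\sech(Kx) f\|_{L^2},
\end{equation*}
which is \eqref{eq:poids} with an implicit constant independent of $\gamma$ and $K$.

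The only mild obstacle is getting the pointwise weight ratio bound with a constant in the exponent that is genuinely uniform in $K \in (0,1]$; this is why one writes $e^{K|x-y|} \le e^{|x-y|}$ and absorbs the loss into the resolvent, which decays at rate $1/\sqrt\gamma$ and can afford it once $\gamma$ is small. Everything else is the explicit Green's function formula and Schur's test, both routine. One should note for later use (this lemma will be applied to terms like $(1-\gamma\partial_x^2)^{-1}SU[\chi_B N^\perp]$ and to $(Q^{2\alpha})'$-type factors) that the same argument works verbatim with $\sech(Kx)$ replaced by any weight $m$ satisfying $m(x)/m(y) \le e^{c|x-y|}$ with $c$ fixed $<1/\sqrt\gamma$, but the statement as given suffices.
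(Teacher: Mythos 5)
Your proof is correct, and it takes a genuinely different route from the paper's. The paper conjugates the differential operator by $\cosh(Kx)$: writing $g=\sech(Kx)(1-\gamma\partial_x^2)^{-1}f$ and $k=\sech(Kx)f$, it computes $k=\left[(1-\gamma K^2)-\gamma\partial_x^2\right]g-2\gamma K\tanh(Kx)g'$, inverts the shifted resolvent using the uniform bounds $\|[(1-\gamma K^2)-\gamma\partial_x^2]^{-1}\|_{\mathcal L(L^2)}\lesssim 1$ and $\|[(1-\gamma K^2)-\gamma\partial_x^2]^{-1}\partial_x\|_{\mathcal L(L^2)}\lesssim\gamma^{-1/2}$, and closes with an absorption argument $\|g\|_{L^2}\leq C\|k\|_{L^2}+C\gamma^{1/2}\|g\|_{L^2}$. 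You instead use the explicit one-dimensional Green's function $G_\gamma(x)=\tfrac{1}{2\sqrt\gamma}e^{-|x|/\sqrt\gamma}$, the pointwise weight-ratio bound $\cosh(Ky)/\cosh(Kx)\leq e^{K|x-y|}\leq e^{|x-y|}$, and Schur's test on the dominating convolution kernel. Your computations check out: the kernel identity, the inequality $\cosh(a+b)\leq e^{|a|}\cosh(b)$, the integral $\int\widetilde G_\gamma=(1-\sqrt\gamma)^{-1}\leq 2$ for $\gamma\leq\tfrac14$, and the domination of the (non-symmetric) conjugated kernel by the symmetric $\widetilde G_\gamma$, to which Schur's test applies. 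What your approach buys is an explicit, clean constant ($2$, with a transparent smallness condition $\gamma\leq\tfrac14$) and no absorption step; what it costs is reliance on the explicit 1D resolvent kernel, whereas the paper's operator-theoretic argument would transfer more directly to settings where no such closed form is available. Both are valid here, and your closing remark about general weights with $m(x)/m(y)\leq e^{c|x-y|}$, $c<1/\sqrt\gamma$, is a correct and potentially useful generalization.
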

\begin{proof}
We set $g=\sech(Kx) (1-\gamma \partial_x^2)^{-1} f$ and $k=\sech(Kx) f$.
We have
\begin{align*}
\cosh(Kx) k&= (1-\gamma \partial_x^2) [\cosh(Kx) g] \\
& = \cosh(Kx) g -\gamma K^2 \cosh(Kx) g - 2\gamma K \sinh(Kx) g' -\gamma \cosh(Kx) g''.
\end{align*}
Thus,
\begin{align*}
k= \left[ (1-\gamma K^2) -\gamma \partial_x^2\right] g - 2\gamma K \tanh(Kx) g'.
\end{align*}
For $0<K\leq 1$ and $\gamma\leq \frac 12$, we apply the operator $ \left[ (1-\gamma K^2) -\gamma \partial_x^2\right]^{-1}$, to obtain
\begin{align*}
g = \left[ (1-\gamma K^2) -\gamma \partial_x^2\right]^{-1} k + 2\gamma K \left[ (1-\gamma K^2) -\gamma \partial_x^2\right]^{-1} \left[\tanh(Kx) g'\right].
\end{align*}
For $0<K\leq 1$ and $\gamma\leq \frac 12$, one has
\begin{align*}
&\|\left[ (1-\gamma K^2) -\gamma \partial_x^2\right]^{-1}\|_{\mathcal L(L^2,L^2)}\lesssim 1,
\\
&\|\left[ (1-\gamma K^2) -\gamma \partial_x^2\right]^{-1}\partial_x\|_{\mathcal L(L^2,L^2)}\lesssim \gamma^{-\frac 12}.
\end{align*}
Thus,
$\|\left[ (1-\gamma K^2) -\gamma \partial_x^2\right]^{-1} k \|_{L^2}\lesssim \|k\|_{L^2},
$
and
\begin{align*}
&\|\left[(1-\gamma K^2) -\gamma \partial_x^2\right]^{-1} \left[\tanh(Kx) g'\right]\|_{L^2}
\\&\quad \lesssim 
\|\left[(1-\gamma K^2) -\gamma \partial_x^2\right]^{-1} \partial_x \left[\tanh(Kx) g\right]\|_{L^2}\\
&\quad \qquad + \|\left[(1-\gamma K^2) -\gamma \partial_x^2\right]^{-1} \left[\sech^2(Kx) g\right]\|_{L^2}\lesssim \gamma^{-\frac 12}\|g\|_{L^2}.
\end{align*}
We deduce, for a constant $C$ independent of $\gamma$,
\begin{equation*}
\|g\|_{L^2} \leq C \|k\|_{L^2} + C \gamma^{\frac 12} \|g\|_{L^2},
\end{equation*}
which implies~\eqref{eq:poids} for $\gamma$ small enough.
\end{proof}

\subsection{Control of error terms} Now, we are in a position to control the error terms in~\eqref{eq:J0}.

\smallskip

\emph{Control of $\widetilde J_1$.}
By the definition of $\zeta_B$, it holds 
\begin{equation*}
\zeta_B(x)\lesssim e^{-\frac {|x|}{B}},\quad
|\zeta'_B(x)|\lesssim\frac 1B e^{-\frac {|x|}{B}}.
\end{equation*}
Thus, using the properties of $\chi$ in \eqref{on:chi}, we have
\begin{align*}
 \int \left[ |\chi_B''| \chi_B \zeta_B^2 +
 (\chi_B')^2\zeta_B^2 
+ |\chi_B ' \zeta_B'|\chi_B \zeta_B\right] v_1^2
\lesssim \int_{B^2\leq |x|\leq 2B^2} e^{-\frac{2|x|}{B}} v_1^2
\lesssim e^{-2B} \|v_1\|_{L^2}^2.
\end{align*}
Next, since $|\varphi_B|\lesssim B$ and $|(\chi_B^2)'|\lesssim B^{-2}$, $|(\chi_B^2)'''|\lesssim B^{-6}$,
we have
\[
\int |(\chi_B^2)' \varphi_B| (\partial_x v_1)^2 
\lesssim B^{-1} \|\partial_x v_1\|_{L^2}^2
\quad \mbox{and}\quad
\int |(\chi_B^2)''' \varphi_B| v_1^2\lesssim B^{-5} \|v_1\|_{L^2}^2.
\]
Using~\eqref{eq:vun}-\eqref{eq:vdeux}, we conclude for this term
\begin{equation}\label{eq:J2}
|\widetilde J_1|\lesssim \gamma^{-2}B^{-1}\|w\|_\loc^2.
\end{equation}

\smallskip

\emph{Control of $J_2$.}
By the Cauchy-Schwarz inequality,
\begin{equation*}
|J_2|\lesssim \gamma 
\left\| Q^{\alpha} (1-\gamma \partial_x^2)^{-1} \left(\psi_B\partial_x v_1+\frac 12 \psi_B' v_1\right) \right\|_{L^2}
\left( \|Q^{\alpha} v_1\|_{L^2}+\|Q^{\alpha} \partial_x v_1\|_{L^2}\right).
\end{equation*}
First, we estimate using~\eqref{eq:poids}
\begin{equation*}
\left\| Q (1-\gamma \partial_x^2)^{-1} \left(\psi_B\partial_x v_1\right) \right\|_{L^2}
\lesssim \left\| Q \psi_B\partial_x v_1 \right\|_{L^2}
\end{equation*}
From the definition of $z$ in \eqref{def:z}, we have
\begin{equation*}
\partial_x z = \zeta_B \chi_B \partial_x v_1
+ (\zeta_B \chi_B )' v_1,
\end{equation*}
and so
\begin{equation*}
\zeta_B^2 \chi_B^2 |\partial_x v_1|^2
\lesssim |\partial_x z|^2 + |(\zeta_B \chi_B)' v_1|^2.
\end{equation*}
Using $|\chi'| \lesssim 1$, the definitions of $\chi_B$ and $\zeta_B$ and again the definition of $z$
\begin{equation*}
|(\zeta_B \chi_B )' v_1|^2 \chi_B^2 \lesssim B^{-2} \zeta_B^2 \chi_B^2 v_1^2 \lesssim B^{-2} z^2 ,
\end{equation*}
and so
\begin{equation}\label{diese}
\zeta_B^2 \chi_B^4 |\partial_x v_1|^2
\lesssim |\partial_x z|^2\chi_B^2 + B^{-2} z^2
\lesssim |\partial_x z|^2+z^2.
\end{equation}
Thus, using $|\psi_B|\lesssim |x| \chi_B^2$, 
\begin{equation*}
|Q \psi_B\partial_x v_1 |^2 
\lesssim |x|^2 Q^{2} \chi_B^4 |\partial_x v_1 |^2 
\lesssim Q\zeta_B^2 \chi_B^4 |\partial_x v_1|^2\lesssim |\partial_x z|^2 + Q z^2 .
\end{equation*}
It follows that
\begin{equation*}
\left\| Q \psi_B\partial_x v_1 \right\|_{L^2} \lesssim \|z\|_\loc .
\end{equation*}

Second, we also estimate using~\eqref{eq:poids}
\begin{equation*}
\left\| Q (1-\gamma \partial_x^2)^{-1} \left( \psi_B' v_1\right) \right\|_{L^2}
\lesssim \left\| Q \psi_B' v_1 \right\|_{L^2}
\end{equation*}
We claim
\begin{equation}\label{on:psix}
(\psi_B')^2\lesssim \chi_B^2.
\end{equation}
Indeed, using $|\chi_B'|\lesssim B^{-2}$, $|\varphi_B|\lesssim |x|$, $\chi_B=0$ for $|x|\geq 2B^2$
and $\zeta_B\leq 1$,
\begin{equation*}
(\psi_B')^2 
\lesssim [\chi_B' \chi_B]^2\varphi_B^2 
+\zeta_B^4 \chi_B^4 \lesssim \chi_B^2.
\end{equation*}

Using~\eqref{on:psix}, we infer that $|(\psi_B')^2 v_1^2|\lesssim \chi_B^2 v_1^2$, thus
$|Q(\psi_B')^2 v_1^2|\lesssim z^2$,
 and so
\begin{equation*}
\| Q \psi_B' v_1 \|_{L^2} \lesssim \| Q^\frac 12 z \|_{L^2} 
\lesssim \|z\|_\loc .\end{equation*}

Now, we estimate $\|Q^{\alpha} v_1\|_{L^2}$ and $\|Q^{\alpha} \partial_x v_1\|_{L^2}$.
From the definition of $z$ in~\eqref{def:z}, we have
$e^{-|x|} v_1^2 \chi_B^2\lesssim z^2$.
Thus, from the definition of $\chi_B$,
\begin{equation*}
e^{-2|x|} v_1^2 \lesssim e^{-2|x|} v_1^2 \chi_B^2 + e^{-2B^2} v_1^2
\lesssim e^{-|x|} z^2 + e^{-2B^2} v_1^2.
\end{equation*}
It follows using also~\eqref{eq:vun} that
\begin{equation*}
\|e^{-|x|} v_1\|_{L^2}
\lesssim \|z\|_\loc 
+ e^{-\frac 12 B^2} \gamma^{-1} \|w\|_\loc.
\end{equation*}
Differentiating  $z=\chi_B\zeta_B v_1$, we have
\begin{equation*}
\chi_B \zeta_B \partial_x v_1 = \partial_x z- \frac{ \zeta_B'}{\zeta_B} z - \chi_B' \zeta_B v_1.
\end{equation*}
Thus, as before,
\begin{align*}
e^{-2|x|} (\partial_x v_1)^2
&\lesssim e^{-|x|} \left[(\partial_x z)^2 + z^2\right]+e^{-2B^2}\left[(\partial_x v_1)^2+v_1^2\right].
\end{align*}
It follows using~\eqref{eq:vun} and~\eqref{eq:vdeux} that
\begin{equation*}
\|e^{-|x|} \partial_x v_1\|_{L^2}
\lesssim \|z\|_\loc
+ e^{-\frac 12 B^2} \gamma^{-1} \|w\|_\loc.
\end{equation*}

Collecting these estimates, we conlude
\begin{equation}\label{eq:J3}
|J_2|\lesssim \gamma \|z\|_\loc^2 
+e^{-B} \|w\|_\loc \|z\|_\loc.
\end{equation}

\smallskip

\emph{Control of $J_3$.}
Using Cauchy-Schwarz inequality and~\eqref{eq:SUgamma}, we have
\begin{equation*}
|J_3|\lesssim \gamma^{-1} \left(\|\psi_B \partial_x v_1\|_{L^2} + \|\psi_B' v_1\|_{L^2} \right) 
\left(\|\chi_B' \partial_x u_1\|_{L^2}+\|\chi_B'' u_1\|_{L^2}\right).
\end{equation*}
First, using $|\psi_B|\lesssim B$ (from its definition and $|\varphi_B|\lesssim B$) and~\eqref{eq:vdeux},
\begin{equation*}
\|\psi_B \partial_x v_1\|_{L^2} \lesssim B \| \partial_x v_1\|_{L^2} 
\lesssim \gamma^{-1} B \|w\|_\loc .
\end{equation*}
Then, since $|\varphi_B|\lesssim B$ and $\varphi_B'=\zeta_B^2$, 
\begin{equation*}
\left|\psi_B'\right| = \left|2\chi'_B \chi_B \varphi_B+\zeta_B^2 \chi_B^2\right|
\lesssim B^{-1}+\zeta_B^2 \chi_B^2.
\end{equation*}
Thus, using the definition~\eqref{def:z}, $z=\chi_B \zeta_B v_1$ and then~\eqref{eq:vun},
\begin{equation*}
\|\psi_B' v_1\|_{L^2}^2 
\lesssim B^{-2} \|v_1\|_{L^2}^2 + \int \zeta_B^2 z^2
\lesssim \gamma^{-2} B^2 \|w\|_\loc^2 +B^2 \|z\|_\loc^2.
\end{equation*}
In conclusion,
\begin{equation}\label{7fev}
\|\psi_B \partial_x v_1\|_{L^2} + \|\psi_B' v_1\|_{L^2}
\lesssim 
\gamma^{-1} B \|w\|_\loc+B \|z\|_\loc.
\end{equation}

Second, differentiating $w=\zeta_A u_1$, we have $\partial_x w = \zeta_A' u_1 + \zeta_A \partial_x u_1$,
so that (using also the assumption $A\gg B^2$),
\begin{equation*}\mbox{for $|x|<A$,}\quad |\partial_x u_1|^2\lesssim A^{-2} |u_1|^2 + |\partial_x w|^2
\lesssim B^{-4}|w|^2 + |\partial_x w|^2.\end{equation*}
Thus, using also~\eqref{eq:wun},
\begin{align*}
\|\chi_B' \partial_x u_1\|_{L^2}^2
&\lesssim B^{-4} \int_{B^2 <|x|<2B^2} |\partial_x u_1|^2
\\&\lesssim B^{-4} \left[\int |\partial_x w|^2 +B^{-4} \int_{|x|<2B^2} |w|^2\right]
 \lesssim
B^{-4}\|w\|_\loc^2.
\end{align*}
Next, by the definition of $\chi_B$ and~\eqref{eq:wun},
\begin{equation*}
\|\chi_B'' u_1\|_{L^2}^2
 \lesssim B^{-8} \int_{B^2 <|x|<2B^2} |u_1|^2
\lesssim B^{-8} \int_{|x|<2B^2} |w|^2
 \lesssim B^{-4} \|w\|_\loc^2.
\end{equation*}
In conclusion,
\begin{equation}\label{7fev2}
 \|\chi_B' \partial_x u_1\|_{L^2}+\|\chi_B'' u_1\|_{L^2}
\lesssim B^{-2} \|w\|_\loc^2.
\end{equation}

Collecting~\eqref{7fev} and~\eqref{7fev2}, we obtain
\begin{equation}\label{eq:J4}
|J_3|\lesssim \gamma^{-2}B^{-1} \|w\|_\loc^2 + \gamma^{-1}B^{-1} \|w\|_\loc \|z\|_\loc.
\end{equation}

\smallskip

\emph{Control of $J_4$.}
Using the Cauchy-Schwarz inequality,~\eqref{eq:SUgamma} and then 
$N^\perp=N-N_0 Y_0$, we have
\begin{align*}
|J_4|&\lesssim \gamma^{-1} \left(\|\psi_B \partial_x v_1\|_{L^2} + \|\psi_B' v_1\|_{L^2} \right) \| \chi_B N^\perp\|_{L^2}
\\&\lesssim \gamma^{-1} \left(\|\psi_B \partial_x v_1\|_{L^2} + \|\psi_B' v_1\|_{L^2} \right) 
\left( \| \chi_B N\|_{L^2} +|N_0|\right).
\end{align*}
By~\eqref{sur:N}, $|a_1|\lesssim 1$, $\|u_1\|_{L^\infty}\lesssim 1$, and decay properties of $Y_0$ and $Q$, we have
\begin{align*}
\| \chi_B N\|_{L^2}
&\lesssim a_1^2 + \|u_1\|_{L^\infty} \|Q \chi_B u_1\|_{L^2} +|a_1|^{2\alpha+1} +\|u_1\|_{L^\infty}^{2\alpha}\|\chi_B u_1\|_{L^2}
\\
&\lesssim a_1^2 + \|u_1\|_{L^\infty} \|\chi_B u_1\|_{L^2}.
\end{align*}
Using $\chi_B\lesssim \zeta_A$ (since $A\gg B^2$ in~\eqref{eq:A_B}) and~\eqref{eq:wun},
it holds
\begin{equation*}
\|\chi_B u_1\|_{L^2}^2
\lesssim \int_{|x|\leq 2B^2} w^2 \lesssim
B^4 \|w\|_\loc^2.\end{equation*}
Moreover, from~\eqref{sur:N0},
\[
|N_0|\lesssim a_1^2+\|u_1\|_{L^\infty} \|w\|_\loc.
\]

Therefore, using again~\eqref{7fev}, we obtain
\begin{equation}\label{eq:J5}
|J_4|\lesssim \gamma^{-2}B \left( \|w\|_\loc+ \|z\|_\loc \right)
\left(a_1^2 + B^2 \|u_1\|_{L^\infty} \|w\|_\loc\right).
\end{equation}

\subsection{End of proof of Proposition~\ref{virielJ}} From~\eqref{eq:J0},~\eqref{eq:zun},~\eqref{eq:J2},~\eqref{eq:J3},~\eqref{eq:J4} and~\eqref{eq:J5}, it
follows that there exist $C_2>0$ and $C>0$ such that 
\begin{align*}
\dot{\mathcal J} &\leq 
-4 C_2 \|z\|_\loc^2 
 + C \gamma^{-2} B^{-1} \|w\|_\loc^2 + C \gamma \|z\|_\loc^2
+C e^{-B} \|w\|_\loc \|z\|_\loc
\\&\quad + C\gamma^{-1} B^{-1} \|w\|_\loc \|z\|_\loc
 +C\gamma^{-2} B \left(\|w\|_\loc+\|z\|_\loc\right)\left(a_1^2+B^2\|u_1\|_{L^\infty}\|w\|_\loc\right).
\end{align*}
We fix $\gamma>0$ such that $C\gamma \leq 2C_2$ and also small enough to satisfy Lemma~\ref{le:5}.

The value of $\gamma$ being now fixed, we do not mention anymore dependency in $\gamma$.
Using standard inequalities and $B$ large enough, we obtain, for a possibly large constant $C>0$,
\begin{equation*}
\dot{\mathcal J} \leq 
-C_2\|z\|_\loc^2 
+C B^{-1} \|w\|_\loc^2 
 +C B^3 \left(a_1^2+B^2\|u_1\|_{L^\infty}\|w\|_\loc\right)^2.
\end{equation*}
Choosing (as specified in the statement of Proposition~\ref{virielJ})
\begin{equation*}
B=\delta^{-\frac 14},
\end{equation*}
and next using the assumption~\eqref{small},
we have
\begin{equation*}
B^3 (B^2\|u_1\|_{L^\infty}\|w\|_\loc)^2 \lesssim 
\delta^{-\frac 74}\|u_1\|_{L^\infty}^2 \|w\|_\loc^2
\lesssim \delta^{\frac 14} \|w\|_\loc^2.
\end{equation*}
Therefore, using again~\eqref{small}, for $\delta$ small enough (to absorb some constants), we obtain
\begin{equation*}
\dot{\mathcal J} \leq 
-C_2 \|z\|_\loc^2 
+C\delta^{\frac 14} \|w\|_\loc^2 + B^3 a_1^4
\leq -C_2\|z\|_\loc^2 + \delta^{\frac 18} \|w\|_\loc^2 + |a_1|^3.
\end{equation*}
This estimate completes the proof of Proposition~\ref{virielJ}.

\section{Coercivity and proof of Theorem~\ref{th:1}}\label{S:5}
In this section, the constant $\gamma$ is fixed as in Proposition~~\ref{virielJ}.
\subsection{Coercivity results}
\begin{lemma}
Let $B>2$.
Let $u$ and $v$ be Schwartz functions related by
\begin{equation}\label{eq:coer2}
v=(1-\gamma \partial_x^2)^{-1} SU (\chi_B u).
\end{equation}
Assume
\begin{equation}\label{eq:ortho}
\langle u,Y_0\rangle=\langle u,Q'\rangle=0.
\end{equation}
It holds
\begin{equation}\label{eq:coer}
\int (\chi_B u)^2 \sech\left(\frac{x}2\right) 
\lesssim \int \left[ (\partial_x v)^2 + v^2\right] \rho^2
+e^{-B} \int u^2 \sech\left( \frac{x}2\right).
\end{equation}
\end{lemma}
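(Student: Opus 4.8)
The plan is to invert the second-order differential operator $SU$ appearing in~\eqref{eq:coer2} on the orthogonal complement of its kernel, and to exploit that this inverse is \emph{quasi-local}, i.e. has an exponentially decaying integral kernel, so that the weighted control of $v$ in~\eqref{eq:coer} transfers to a weighted control of $\chi_B u$. First, by the algebra of Section~\ref{S:heuristic}, $SUY_0=0$ (since $UY_0=0$) and $SUQ'=S(UQ')=-\alpha\,SQ=0$, so $\mathrm{span}\{Y_0,Q'\}\subset\ker(SU)$; since every solution of $SU\psi=0$ decays like $e^{-|x|}$ at $\pm\infty$ (its characteristic exponents there are $-1,-(\alpha+1)$ and $1,\alpha+1$ respectively), in fact $\ker(SU)=\mathrm{span}\{Y_0,Q'\}$. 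From the factorizations $(SU)^\star(SU)=(L-\lambda_0)L$ and $(SU)(SU)^\star=(L_0-\lambda_0)L_0$ (consequences of the intertwining identities of Section~\ref{S:heuristic}), together with the fact that for $\alpha>1$ the operator $L$ is $\ge 1$ on $\{Y_0,Q'\}^\perp$ (no internal mode) and $L_0\ge 1$ (positive potential in $L_0$), one gets that $SU$ maps $H^2\cap\{Y_0,Q'\}^\perp$ bijectively onto $L^2$; write $(SU)^{-1}$ for the inverse and $P$ for the $L^2$-projection onto $\{Y_0,Q'\}^\perp$, so that $(SU)^{-1}SU=P$. Substituting $\partial_x^2=SU-(\alpha+2)\tanh(\alpha x)\partial_x-c(x)$ with $c(x)=(\alpha+1)\bigl(1+\tfrac{\alpha-1}{\cosh^2(\alpha x)}\bigr)$ into $(SU)^{-1}(1-\gamma\partial_x^2)$ yields the operator identity
\begin{equation*}
(SU)^{-1}(1-\gamma\partial_x^2)=-\gamma\,\mathrm{Id}+R_0,\qquad
R_0=(SU)^{-1}+\gamma(\mathrm{Id}-P)+\gamma(SU)^{-1}\bigl[(\alpha+2)\tanh(\alpha x)\partial_x+c(x)\bigr].
\end{equation*}
The key structural point is that $R_0$ has an integral kernel with $|R_0(x,y)|\lesssim e^{-|x-y|}$ uniformly for $\gamma\in(0,1]$: the kernel of $\mathrm{Id}-P$ is $Y_0(x)Y_0(y)+\|Q'\|^{-2}Q'(x)Q'(y)\lesssim e^{-|x|-|y|}\le e^{-|x-y|}$; the Green's function $G(x,y)$ of $SU$ decays like $e^{-|x-y|}$ off the diagonal and is bounded on it (the homogeneous solutions decay like $e^{-|x|}$, and the $\perp Y_0,Q'$ normalisation only adds $e^{-|x|-|y|}$-type terms); and the last term, after one integration by parts, has kernel built from $G$ and $\partial_yG$ against bounded coefficients, hence again $\lesssim e^{-|x-y|}$.

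Granting this, the proof is short. From $\langle u,Y_0\rangle=\langle u,Q'\rangle=0$, the exponential decay of $Y_0,Q'$, the fact that $1-\chi_B=0$ on $\{|x|\le B^2\}$, and Cauchy--Schwarz against $\sech(x/2)$, the function $g:=\chi_B u$ satisfies $|\langle g,Y_0\rangle|+|\langle g,Q'\rangle|\lesssim e^{-B^2/C}\bigl(\int u^2\sech(x/2)\bigr)^{1/2}$, so $g=g^\flat+r$ with $g^\flat:=Pg$ and $\int r^2\sech(x/2)\lesssim e^{-B^2/C}\int u^2\sech(x/2)$. Since $SUg=SUg^\flat$, \eqref{eq:coer2} gives $(1-\gamma\partial_x^2)v=SUg^\flat$, whence $g^\flat=(SU)^{-1}(1-\gamma\partial_x^2)v=-\gamma v+R_0 v$. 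Using $\sech(x/2)\le\sech^2(x/10)=\rho^2$ pointwise we get $\gamma^2\int v^2\sech(x/2)\le\int v^2\rho^2$; and by the kernel bound, Cauchy--Schwarz, and $\int e^{-|x-y|}\sech(x/2)\,dx\lesssim e^{-|y|/2}\lesssim\rho^2(y)$,
\begin{equation*}
\int (R_0 v)^2\sech(x/2)\,dx\lesssim\int\!\!\int e^{-|x-y|}v(y)^2\sech(x/2)\,dy\,dx\lesssim\int v^2\rho^2.
\end{equation*}
Hence $\int (g^\flat)^2\sech(x/2)\lesssim\int v^2\rho^2$; adding the contribution of $r$ and using $e^{-B^2/C}\lesssim e^{-B}$ (for $B$ in a bounded range the desired bound is trivial, since then $\int(\chi_B u)^2\sech(x/2)\le\int u^2\sech(x/2)\lesssim e^{-B}\int u^2\sech(x/2)$) gives~\eqref{eq:coer}, with a constant independent of $\gamma$ and $B$.

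The main obstacle is the structural claim of the first paragraph: invertibility of $SU$ on $\{Y_0,Q'\}^\perp$ — exactly where the hypothesis $\alpha>1$ enters, through the spectral gap of $L$ above the zero mode and the positivity of the potential of $L_0$ — and, above all, the exponential localisation of the kernel of the resulting inverse together with the corrections produced by the regularisation $(1-\gamma\partial_x^2)^{-1}$ and by the non-commutation of $SU$ with $\partial_x^2$. This quasi-locality is precisely what converts the global intertwining algebra into the weighted, $B$-uniform estimate~\eqref{eq:coer}; the remaining ingredients are routine Cauchy--Schwarz and elementary pointwise comparisons between $\sech(x/2)$, $\rho$ and $e^{-|x-y|}$.
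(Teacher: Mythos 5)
Your strategy is genuinely different from the paper's: you invert $SU$ abstractly on $\{Y_0,Q'\}^\perp$ and try to transfer the weighted bound through an exponentially localized Green's function, whereas the paper never inverts $SU$ as an operator — it integrates the two first-order factors explicitly (two successive integrations from $0$ to $x$ of the identity $v-\gamma\partial_x^2 v=Q\partial_x(\tfrac{Y_0}{Q}\partial_x(\tfrac{\chi_B u}{Y_0}))$), obtaining $\chi_B u=bY_0+aY_0\int_0^x Q/Y_0+\tilde u$ with $\tilde u$ an explicit double integral of $v,\partial_x v$ that is estimated pointwise by weighted Cauchy--Schwarz, the constants $a,b$ being fixed at the end by the orthogonality conditions (this is where the $e^{-B}$ error comes from). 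Your second paragraph (splitting $\chi_B u=Pg+r$, the bound on $r$, and the Schur-type estimates) is fine as written, and your treatment of the $\gamma$-terms via $\partial_x^2=SU-(\alpha+2)\tanh(\alpha x)\partial_x-c(x)$ is a legitimate alternative to the paper's absorption of $\gamma\partial_x v/Q$ into the integrations.

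However, there is a genuine gap, and you identify it yourself: the entire argument rests on the existence of $(SU)^{-1}$ on $\{Y_0,Q'\}^\perp$ and on the kernel bound $|R_0(x,y)|\lesssim e^{-|x-y|}$ uniformly in $\gamma$, and neither is proved. Moreover, the one-line justification you offer for the Green's function bound is not by itself correct. The operator $SU$ is exceptional: \emph{both} homogeneous solutions $Y_0\sim e^{-(\alpha+1)|x|}$ and $Q'\sim e^{-|x|}$ decay at \emph{both} ends, so the Wronskian degenerates, $W(x)\sim e^{-(\alpha+2)|x|}$, and the naive building block $f_i(x)f_j(y)/W(y)$ is of size $e^{(\alpha+1)(|y|-|x|)}$ or $e^{|y|-|x|}$ — which is \emph{not} $\lesssim e^{-|x-y|}$ when $x$ and $y$ have opposite signs or $|y|>|x|$. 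The bound only holds on the Volterra region $\{0\le y\le x\}\cup\{x\le y\le 0\}$ arising from variation of parameters based at $0$, plus rank-two corrections of the form $e^{-|x|-|y|}$ from the re-projection onto $\{Y_0,Q'\}^\perp$; establishing this (together with the boundary terms at $y=0$ and $y=x$ produced by your integration by parts in $(SU)^{-1}\tanh(\alpha\cdot)\partial_x$, and a quantitative lower bound for $\|SUf\|$ on $\{Y_0,Q'\}^\perp$ giving surjectivity) is precisely the content you would need to supply. Once one writes out that Volterra representation explicitly, one has essentially reproduced the paper's proof, which is why the paper works directly with the integrated form rather than with an abstract inverse.
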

\begin{proof}
Using the expression of $S$ and $U$,
we rewrite~\eqref{eq:coer2} as
\begin{equation*}
v-\gamma \partial_x^2 v = Q\partial_x\left( \frac {Y_0}Q\partial_x\left(\frac {\chi_B u}{Y_0}\right)\right),
\end{equation*}
and thus
\begin{equation*}
\partial_x\left( \frac {Y_0}Q\partial_x\left(\frac {\chi_B u}{Y_0} \right)+ \gamma \frac{\partial_x v}{Q}\right)
=\frac 1 Q\left( v-\gamma \frac{Q'}{Q} \partial_x v \right). 
\end{equation*}
Integrating between $0$ and $x>0$, this yields, for some constant $a$,
\begin{equation*}
 \frac {Y_0}Q\partial_x\left(\frac {\chi_B u}{Y_0} \right)+ \gamma \frac{\partial_x v}{Q} 
 =a + \int_0^x \left[\frac 1 Q\left( v-\gamma \frac{Q'}{Q} \partial_x v \right)\right],
\end{equation*}
which rewrites as
\begin{equation*}
\partial_x\left(\frac {\chi_B u}{Y_0} \right)=
 a\frac Q{Y_0} - \gamma \frac{\partial_x v}{{Y_0}}+\frac Q{Y_0}\int_0^x \left[\frac 1 Q\left( v-\gamma \frac{Q'}{Q} \partial_x v \right)\right]
 .
\end{equation*}
Integrating on $[0,x]$, $x>0$, and multiplying by ${Y_0}$, it holds, for some constant $b$,
\begin{equation}\label{on:uutilde}
\chi_B u
= b {Y_0} + a {Y_0} \int_0^{x} \frac Q{Y_0} +\tilde u,
\end{equation}
where
\begin{equation*}
\tilde u= {Y_0}\int_0^x \left\{ - \gamma \frac{\partial_x v}{{Y_0}}+\frac Q{Y_0}\int_0^y \left[\frac 1 Q\left( v-\gamma \frac{Q'}{Q} \partial_x v \right)\right]
 \right\}.
\end{equation*}

Let us now estimate $\int \tilde u^2 \sech\left(\frac{x}2\right) $.
First, by the Cauchy-Schwarz inequality,
\begin{equation*}
{Y_0} \int_0^x \frac{|\partial_x v|}{{Y_0}}
\lesssim {Y_0} \left( \int (\partial_x v)^2 \rho^2\right)^{\frac 12}
\left( \int_0^x (\rho Y_0)^{-2} \right)^{\frac 12} 
\lesssim \rho^{-1} \left( \int (\partial_x v)^2 \rho^2\right)^{\frac 12}.
\end{equation*}
Second,
\begin{equation*}
\frac Q{Y_0} \int_0^y \frac {|v|} Q \lesssim
 \frac Q{Y_0} \left( \int v^2 \rho^2\right)^{\frac 12}
\left( \int_0^y (\rho Q)^{-2} \right)^{\frac 12} 
\lesssim (\rho Y_0)^{-1} \left( \int v^2 \rho^2\right)^{\frac 12}
\end{equation*}
Thus,
\begin{equation*}
{Y_0}\int_0^x \frac Q{Y_0} \int_0^y \frac {|v|} Q \lesssim
\left( \int v^2 \rho^2\right)^{\frac 12} {Y_0}\int_0^x (\rho Y_0)^{-1}
\lesssim \rho^{-1} \left( \int v^2 \rho^2\right)^{\frac 12}.
\end{equation*}
Third, since $\frac {|Q'|}Q\lesssim 1$, we obtain similarly,
\begin{equation*}
{Y_0}\int_0^x \frac Q{Y_0} \int_0^y \frac {|Q' \partial_x v|} {Q^2} 
\lesssim \rho^{-1} \left( \int (\partial_x v)^2 \rho^2\right)^{\frac 12}.
\end{equation*}

Collecting these estimates, we obtain, for all $x\geq 0$,
\begin{equation*}
\tilde u^2 \rho^2\lesssim \int \left[ (\partial_x v)^2 + v^2\right] \rho^2.
\end{equation*}
The same holds for $x\leq 0$, and thus
\begin{equation*}
\int \tilde u^2 \sech\left(\frac{x}2\right)\lesssim \int \left[ (\partial_x v)^2 + v^2\right] \rho^2.
\end{equation*}

To complete the proof, we estimate the constants $a$ and $b$ in \eqref{on:uutilde}. Using~\eqref{eq:ortho} and parity property, projecting~\eqref{on:uutilde} on
$Y_0$ yields
\begin{equation*}
\langle \chi_B u,Y_0\rangle
=\langle (\chi_B-1) u,Y_0\rangle
=b + \langle \tilde u,Y_0\rangle.
\end{equation*}
Thus,
\begin{equation*}
b^2\lesssim \int \tilde u^2 \sech\left(x\right) + 
\int u^2 \sech\left(x\right) (1-\chi_B)^2
\lesssim \int \tilde u^2 \sech\left(x\right) + e^{-\frac 12 B^2} \int u^2 \sech\left(\frac x2\right).
\end{equation*}
Using~\eqref{eq:ortho}, ${Y_0} \int_0^{x} \frac Q{Y_0}=-\alpha^{-1}Q'$ and projecting~\eqref{on:uutilde} on
$Q'$ yields similarly
\begin{equation*}
a^2\lesssim \int \tilde u^2 \sech\left(x\right) + e^{-\frac 12 B^2} \int u^2 \sech\left(\frac x2\right).
\end{equation*}
We conclude the proof using again~\eqref{on:uutilde}.
\end{proof}
The next result is a consequence of the previous general lemma, in the framework of the time-dependent functions introduced in~\eqref{myortho},
\eqref{def:w},~\eqref{def:v1v2} and~\eqref{def:z}.
\begin{lemma}
For $B$ large enough, it holds
\begin{equation}\label{eq:A}
\int w^2 \sech\left(\frac x2\right)\lesssim \|z\|_\loc^2 + e^{-B} \|\partial_x w\|_{L^2}^2,
\end{equation}
and
\begin{equation}\label{eq:B}
\|w\|_\loc^2 \lesssim \|z\|_\loc^2 + \|\partial_x w\|_{L^2}^2.
\end{equation}
\end{lemma}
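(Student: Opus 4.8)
The two estimates follow by specializing the previous general lemma to the time-dependent functions $u=u_1(t)$, $v=v_1(t)$, $w=\zeta_A u_1$, $z=\chi_B\zeta_B v_1$. First I would check the hypotheses of the general lemma: $u_1$ satisfies the orthogonality relations~\eqref{eq:ortho} because $\langle u_1,Y_0\rangle=0$ by~\eqref{myortho}, and $\langle u_1,Q'\rangle=0$ since both $u_1$ and $Q'$ are used in the even setting with $Q'$ odd — so this inner product vanishes by parity (one should state this explicitly). With these in hand, \eqref{eq:coer} gives
\begin{equation*}
\int(\chi_B u_1)^2\sech\!\left(\tfrac x2\right)\lesssim \int\left[(\partial_x v_1)^2+v_1^2\right]\rho^2 + e^{-B}\int u_1^2\sech\!\left(\tfrac x2\right).
\end{equation*}

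Next I would convert the right-hand side into $\|z\|_\loc^2$ and error terms. For the first term, one uses $\rho=\sech(x/10)$ and the definition $z=\chi_B\zeta_B v_1$: on the region where $\rho$ is not exponentially small, $\chi_B=1$ and $\zeta_B$ is comparable to $1$, so $\rho^2 v_1^2\lesssim z^2+(\text{exponentially small in }B)\,v_1^2$, and similarly for $\partial_x v_1$ using $\partial_x z=\chi_B\zeta_B\partial_x v_1+(\chi_B\zeta_B)' v_1$ together with~\eqref{diese}; the leftover $v_1$, $\partial_x v_1$ terms are absorbed via~\eqref{eq:vun}--\eqref{eq:vdeux} into $e^{-cB^2}\gamma^{-2}B^4\|w\|_\loc^2$, which for $B$ large is negligible. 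Thus $\int[(\partial_x v_1)^2+v_1^2]\rho^2\lesssim \|z\|_\loc^2 + e^{-B}\|w\|_\loc^2$ (after also using~\eqref{eq:wdeux} to bound $\|w\|_\loc^2$ by $\|\partial_x w\|_{L^2}^2+\int w^2\sech(x/2)$).

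For the left-hand side of~\eqref{eq:coer} I would compare $\chi_B u_1$ with $w=\zeta_A u_1$: since $A\gg B^2$, on $\{|x|\le 2B^2\}$ one has $\zeta_A$ comparable to $1$ and $\chi_B$ comparable to $1$ on $\{|x|\le B^2\}$, while $\sech(x/2)$ is exponentially small outside a bounded set; more precisely $w^2\sech(x/2)\lesssim (\chi_B u_1)^2\sech(x/2)+(\text{exponentially small})\,w^2$, so modulo the same kind of harmless tails one recovers $\int w^2\sech(x/2)$ on the left. Combining everything and absorbing the term $e^{-B}\int u_1^2\sech(x/2)\lesssim e^{-B}\int w^2\sech(x/2)$ into the left-hand side for $B$ large gives~\eqref{eq:A}. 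Finally, \eqref{eq:B} follows by inserting~\eqref{eq:A} into~\eqref{eq:wdeux}: $\|w\|_\loc^2\lesssim \|\partial_x w\|_{L^2}^2+\int w^2\sech(x/2)\lesssim \|\partial_x w\|_{L^2}^2+\|z\|_\loc^2+e^{-B}\|\partial_x w\|_{L^2}^2$, and the last term is absorbed.

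**Main obstacle.** The delicate point is the bookkeeping of the exponentially-small-in-$B$ and $\gamma^{-1}$-weighted error terms: one must make sure that every tail produced when passing between $u_1$, $w$, $v_1$, $z$ is controlled by $e^{-cB}$ or $e^{-cB^2}$ against a power of $B$ and $\gamma^{-1}$ (which is fixed), so that it can genuinely be absorbed for $B$ large. Keeping track of which norm each leftover is estimated in — via~\eqref{eq:vun}, \eqref{eq:vdeux}, \eqref{eq:wun}, \eqref{eq:wdeux} — and checking that the cutoff-derivative contributions in the relation between $\partial_x v_1$ and $\partial_x z$ never produce a genuinely dangerous term, is the part that requires care rather than cleverness.
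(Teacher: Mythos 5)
Your proposal is correct and follows essentially the same route as the paper: verify the orthogonality hypotheses (parity for $\langle u_1,Q'\rangle=0$), apply the general coercivity lemma, convert the $v_1$-terms to $\|z\|_\loc^2$ via~\eqref{diese} with tails controlled by~\eqref{eq:vun}--\eqref{eq:vdeux}, pass between $\chi_Bu_1$ and $w$, and close with~\eqref{eq:wdeux} and absorption of the exponentially small terms. The only cosmetic difference is that the paper routes $e^{-B}\int u_1^2\sech(x/2)$ through $\|w\|_\loc^2$ before invoking~\eqref{eq:wdeux}, which you also effectively do.
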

\begin{proof}
Recall that the function $u_1$ is even so that it satisfies $\langle u_1,Q'\rangle=0$
in addition to the orthogonality~\eqref{myortho}. Therefore, applying~\eqref{eq:coer},
\begin{equation*}
\int (\chi_B u_1)^2 \sech\left(\frac{x}2\right) 
\lesssim \int \left[ (\partial_x v_1)^2 + v_1^2\right] \rho^2
+e^{-B} \int u_1^2 \sech\left( \frac{x}2\right),
\end{equation*}
which implies by~\eqref{def:w} and~\eqref{def:w_loc}
\begin{equation}\label{eq:rew}
\int (\chi_B w)^2 \sech\left(\frac{x}2\right) 
\lesssim \int \left[ (\partial_x v_1)^2 + v_1^2\right] \rho^2
+e^{-B} \|w\|_\loc^2.
\end{equation}
By~\eqref{def:z} and~\eqref{diese}, it holds
\begin{equation*}
\mbox{for $|x|<B^2$,}\quad 
\rho |\partial_x v_1|^2 +\rho |v_1|^2
\lesssim |\partial_x z|^2+z^2.
\end{equation*}
Thus, using~\eqref{eq:vun}-\eqref{eq:vdeux},
\begin{align*}
\int \left[ (\partial_x v_1)^2 + v_1^2\right] \rho^2
&\lesssim \int_{|x|<B^2} \left[ (\partial_x v_1)^2 + v_1^2\right] \rho^2
+e^{-\frac {B^2}{5}} \|v_1\|_{H^1}^2 \\
&\lesssim \|z\|_\loc^2 + e^{-\frac {B^2}{5}} \|v_1\|_{H^1}^2
\lesssim \|z\|_\loc^2 + e^{-\frac {B^2}{10}} \|w\|_\loc^2.
\end{align*}

Using~\eqref{eq:wdeux} and the definition of $\chi_B$ in~\eqref{def:chiB}, it holds
\begin{equation*}
\|w\|_\loc^2 
 \lesssim \int (\partial_x w)^2 + \int_{|x|<1} w^2 
 \lesssim \int (\partial_x w)^2 + \int (\chi_B w)^2 \sech\left(\frac{x}2\right) .
\end{equation*}
Inserting these estimates into~\eqref{eq:rew}, it follows for $B$ large enough that
\begin{equation*}
\int (\chi_B w)^2 \sech\left(\frac{x}2\right) 
\lesssim \|z\|_\loc^2
+e^{-B} \|\partial_x w\|_{L^2}^2.
\end{equation*}
The last two estimates imply~\eqref{eq:B}.

Finally, 
\begin{align*}
\int w^2 \sech\left(\frac{x}2\right)
&\lesssim \int (\chi_B w)^2 \sech\left(\frac{x}2\right) +e^{-\frac{B^2}4} \int w^2 \rho\\
&\lesssim \int (\chi_B w)^2 \sech\left(\frac{x}2\right) + e^{-B} \|w\|_\loc^2,
\end{align*}
and~\eqref{eq:A} follows.
\end{proof}

\subsection{Proof of Theorem~\ref{th:1}}
Recall that the constants $\gamma>0$, $\delta_1,\delta_2>0$ were defined in Propositions~\ref{pr:virialu} and~\ref{virielJ}.
\begin{proposition}
There exist $C_3>0$ and $0<\delta_3\leq \min(\delta_1,\delta_2)$ such that for any $0<\delta\leq \delta_3$, the following holds.
Fix $A=\delta^{-1}$ and $B=\delta^{-\frac 14}$. Assume that for all $t\geq 0$, \eqref{small} holds.

Let
\begin{equation}\label{def:H}
\mathcal H = \mathcal J + 8\delta_3^{\frac 1{10}} \mathcal I.
\end{equation}
Then, for all $t\geq 0$,
\begin{equation}\label{eq:prvirial}
\dot{\mathcal H}\leq -C_3 \|w\|_\loc^2 + 2|a_1|^3.
\end{equation}
\end{proposition}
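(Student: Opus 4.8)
The plan is to deduce \eqref{eq:prvirial} from the two virial estimates \eqref{eq:pr:u} and \eqref{eq:J} together with the coercivity estimates \eqref{eq:A} and \eqref{eq:B}, once the scales are fixed to $A=\delta^{-1}$ and $B=\delta^{-\frac14}$ (and $\delta_3\leq\min(\delta_1,\delta_2)$ so that Propositions~\ref{pr:virialu} and~\ref{virielJ} apply). Differentiating \eqref{def:H} and inserting \eqref{eq:J} for $\dot{\mathcal J}$ and \eqref{eq:pr:u} for $\dot{\mathcal I}$, one first arrives at
\begin{align*}
\dot{\mathcal H}
&\leq -C_2\|z\|_\loc^2 + \delta^{\frac18}\|w\|_\loc^2 + |a_1|^3\\
&\quad + 8\delta_3^{\frac1{10}}\left(-\tfrac12\int(\partial_x w)^2 + C_1\int\sech\left(\tfrac x2\right)w^2 + C_1|a_1|^4\right).
\end{align*}
The role of the weight $8\delta_3^{1/10}$ multiplying $\mathcal I$ is precisely that the coercive quantity $-\tfrac12\int(\partial_x w)^2$ is retained with the small but \emph{fixed} positive coefficient $4\delta_3^{1/10}$; this reservoir will absorb every error term proportional to $\|\partial_x w\|_{L^2}^2$.

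Next I would remove the two terms built on $\|w\|_\loc$ and on $\int\sech(\tfrac x2)w^2$, which are not yet in the desired form. By \eqref{eq:A}, $\int\sech(\tfrac x2)w^2\lesssim \|z\|_\loc^2 + e^{-B}\|\partial_x w\|_{L^2}^2$; since $B=\delta^{-\frac14}$ is large for $\delta$ small, the factor $e^{-B}$ lets the resulting multiple of $\|\partial_x w\|_{L^2}^2$ be absorbed into $4\delta_3^{1/10}\int(\partial_x w)^2$. By \eqref{eq:B}, $\delta^{1/8}\|w\|_\loc^2\lesssim \delta^{1/8}(\|z\|_\loc^2+\|\partial_x w\|_{L^2}^2)$, and since $\tfrac18>\tfrac1{10}$ one may choose $\delta_3$ small enough (namely $C\delta_3^{1/8}\le\delta_3^{1/10}$) that this is again absorbed. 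After these substitutions one obtains, for a constant $C$ depending only on the earlier (already fixed) constants,
\begin{equation*}
\dot{\mathcal H}\leq -\left(C_2 - C\delta_3^{\frac18} - C\delta_3^{\frac1{10}}\right)\|z\|_\loc^2 - 2\delta_3^{\frac1{10}}\int(\partial_x w)^2 + |a_1|^3 + 8C_1\delta_3^{\frac1{10}}|a_1|^4,
\end{equation*}
and for $\delta_3$ small the bracket is at least $\tfrac12 C_2$.

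Finally, I would turn the gain $-\tfrac12 C_2\|z\|_\loc^2 - 2\delta_3^{1/10}\int(\partial_x w)^2$ into $-C_3\|w\|_\loc^2$: since (for $\delta_3$ small) $2\delta_3^{1/10}$ is the smaller of the two coefficients, this is bounded above by $-2\delta_3^{1/10}\big(\|z\|_\loc^2+\|\partial_x w\|_{L^2}^2\big)$, and \eqref{eq:B} bounds $\|z\|_\loc^2+\|\partial_x w\|_{L^2}^2$ below by a fixed multiple of $\|w\|_\loc^2$, which produces the term $-C_3\|w\|_\loc^2$ with $C_3>0$. For the last pieces, \eqref{small} gives $|a_1|\leq C_0\delta\leq C_0\delta_3$, so $8C_1\delta_3^{1/10}|a_1|^4\leq |a_1|^3$ once $\delta_3$ is small, leaving the admissible remainder $2|a_1|^3$, which yields \eqref{eq:prvirial}. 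I do not foresee a genuine obstacle in this step — it is the bookkeeping step fusing the two virial functionals — the only points requiring care being the order in which constants are frozen (first $\gamma$, then $C_1$, $C_2$ and the coercivity constants, then $\delta_3$ small) and the use of $B=\delta^{-\frac14}\to\infty$ to make the exponential loss $e^{-B}$ negligible.
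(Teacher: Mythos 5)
Your proposal is correct and follows essentially the same route as the paper: combining \eqref{eq:pr:u} and \eqref{eq:J} through the weighted sum \eqref{def:H}, using the coercivity estimates \eqref{eq:A} and \eqref{eq:B} to trade $\int\sech(\frac x2)w^2$ and $\|w\|_\loc^2$ for $\|z\|_\loc^2$ plus absorbable multiples of $\|\partial_x w\|_{L^2}^2$, and concluding with \eqref{eq:B} and the smallness of $|a_1|$. The only difference is cosmetic (you combine the two virial estimates before massaging them, whereas the paper massages each one first), and the bookkeeping with $\delta\le\delta_3$, $\delta_3^{1/8}\ll\delta_3^{1/10}$ and $e^{-B}$ small is handled correctly.
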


\begin{proof}
In the context of Propositions~\ref{pr:virialu} and~\ref{virielJ},
observe that fixing $A=\delta^{-1}$ and $B=\delta^{-\frac 14}$, for $\delta>0$ small is consistent with the requirement $A\gg B^2\gg B\gg 1$ in~\eqref{eq:A_B}.
 
Combining~\eqref{eq:J} with~\eqref{eq:B} and~\eqref{eq:pr:u} with~\eqref{eq:A}, for $\delta_3>0$ small enough
and $0<\delta\leq \delta_3$, one obtains,
for a constant $C>0$,
\begin{align*}
\dot{\mathcal J}&\leq -\frac{C_2}2 \|z\|_\loc^2 + \delta_3^{\frac 1{10}}\|\partial_x w\|_{L^2}^2+|a_1|^3,\\
\dot{\mathcal I}&\leq -\frac 14 \|\partial_x w\|_{L^2}^2+C \|z\|_\loc^2 + |a_1|^3.
\end{align*}
Define $\mathcal H$ as in~\eqref{def:H}. It follows by combining the above estimates that
\begin{equation*}
\dot{\mathcal H}
\leq -\frac {C_2}2 \|z\|_\loc^2 - \delta_3^{\frac 1{10}} \|\partial_x w\|_{L^2}^2
+8C\delta_3^{\frac 1{10}} \|z\|_\loc^2 + \left(1+8 \delta_3^{\frac 1{10}}\right)|a_1|^3.
\end{equation*}
Possibly choosing a smaller $\delta_3$, we obtain
\begin{equation*}
\dot{\mathcal H}
\leq -\frac {C_2}4 \|z\|_\loc^2 - \delta_3^{\frac 1{10}} \|\partial_x w\|_{L^2}^2
 + 2|a_1|^3.
\end{equation*}
This estimate, together with~\eqref{eq:B}, implies~\eqref{eq:prvirial} for some $C_3>0$ (depending on $\delta_3$).
\end{proof}

We set
\begin{equation*}
\mathcal B = b_+^2-b_-^2.
\end{equation*}
\begin{lemma}\label{le:b}
There exist $C_4>0$ and $0<\delta_4\leq \delta_3$ such that for any $0<\delta\leq \delta_4$, the following holds.
Fix $A=\delta^{-1}$. Assume that for all $t\geq 0$, \eqref{small} holds. 
Then, for all $t\geq 0$,
\begin{equation}\label{eq:ble}
|\dot b_+-\nu_0 b_+|+|\dot b_-+\nu_0 b_-|\leq C_4\left( b_+^2 +b_-^2+ \|w\|_\loc^2\right),
\end{equation}
and
\begin{equation}\label{eq:bcle}
 \left|\frac{d}{dt} (b_+^2)-2\nu_0 b_+^2\right|
 +\left|\frac{d}{dt} (b_-^2)+2 \nu_0 b_-^2\right|
 \leq C_4\left( b_+^2 +b_-^2 + \|w\|_\loc^2\right)^{\frac 32}.
\end{equation}
In particular,
\begin{equation}\label{eq:BB}
\dot{\mathcal B}\geq \nu_0 \left( b_+^2+b_-^2\right) - C_4\|w\|_\loc^2
= \frac{\nu_0}2 \left( a_1^2+a_2^2\right) - C_4\|w\|_\loc^2.
\end{equation}
\end{lemma}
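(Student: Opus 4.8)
The estimates~\eqref{eq:ble} and~\eqref{eq:bcle} follow directly from the evolution equations for $b_\pm$ in~\eqref{eq:a}, so the only real work is to bound the nonlinear term $N_0$. Recall from~\eqref{eq:a} that $\dot b_+ - \nu_0 b_+ = \frac{N_0}{2\nu_0}$ and $\dot b_- + \nu_0 b_- = -\frac{N_0}{2\nu_0}$, so~\eqref{eq:ble} is equivalent to an estimate $|N_0| \lesssim b_+^2 + b_-^2 + \|w\|_\loc^2$. This is essentially~\eqref{sur:N0}, which gives $|N_0| \lesssim a_1^2 + \int \sech(\tfrac x2) w^2$; since $a_1 = b_+ + b_-$ we have $a_1^2 \lesssim b_+^2 + b_-^2$, and $\int \sech(\tfrac x2) w^2 \leq \|w\|_\loc^2$ by the definition~\eqref{def:w_loc} of the local norm (as $\sech(\tfrac x2) \lesssim \rho$). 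This proves~\eqref{eq:ble}.

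For~\eqref{eq:bcle}, I would multiply the equations for $b_\pm$ by $2b_\pm$ respectively: $\frac{d}{dt}(b_+^2) - 2\nu_0 b_+^2 = \frac{b_+ N_0}{\nu_0}$ and $\frac{d}{dt}(b_-^2) + 2\nu_0 b_-^2 = -\frac{b_- N_0}{\nu_0}$. Then $|b_\pm N_0| \lesssim (|b_+| + |b_-|)(a_1^2 + \int \sech(\tfrac x2) w^2) \lesssim (b_+^2 + b_-^2 + \|w\|_\loc^2)^{3/2}$, using the elementary bound $|x|(x^2 + y^2) \lesssim (x^2+y^2)^{3/2}$ together with $a_1^2 \lesssim b_+^2 + b_-^2$ and again $\int \sech(\tfrac x2) w^2 \leq \|w\|_\loc^2$. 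This yields~\eqref{eq:bcle}.

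Finally, for~\eqref{eq:BB}, I differentiate $\mathcal B = b_+^2 - b_-^2$ and use~\eqref{eq:bcle}:
\begin{equation*}
\dot{\mathcal B} = \frac{d}{dt}(b_+^2) - \frac{d}{dt}(b_-^2) \geq 2\nu_0(b_+^2 + b_-^2) - C\left(b_+^2 + b_-^2 + \|w\|_\loc^2\right)^{3/2}.
\end{equation*}
By~\eqref{small}, the quantity $b_+^2 + b_-^2 + \|w\|_\loc^2$ is $O(\delta^2)$, hence as small as we like; so for $\delta_4$ small enough the cubic error term is absorbed, leaving
\begin{equation*}
\dot{\mathcal B} \geq 2\nu_0(b_+^2 + b_-^2) - C\delta(b_+^2 + b_-^2) - C\|w\|_\loc^2 \geq \nu_0(b_+^2 + b_-^2) - C_4 \|w\|_\loc^2.
\end{equation*}
The identity $b_+^2 + b_-^2 = \tfrac12(a_1^2 + a_2^2)$ follows from~\eqref{def:b} by direct expansion, giving the last equality in~\eqref{eq:BB}. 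There is no genuine obstacle here; the only point requiring a little care is checking that the Taylor-expansion bound~\eqref{sur:N0} on $N_0$ is indeed controlled by the local norm $\|w\|_\loc$ rather than a stronger norm, which is why the weight $\rho = \sech(\tfrac{x}{10})$ in~\eqref{def:w_loc} was chosen to dominate $\sech(\tfrac x2)$.
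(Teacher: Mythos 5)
Your proposal is correct and follows essentially the same route as the paper: both arguments reduce everything to the bound $|N_0|\lesssim a_1^2+\|w\|_\loc^2\lesssim b_+^2+b_-^2+\|w\|_\loc^2$ coming from~\eqref{sur:N0} and~\eqref{def:b}, then read off~\eqref{eq:ble} and~\eqref{eq:bcle} from the ODEs~\eqref{eq:a}, and absorb the cubic error using the smallness~\eqref{small} to get~\eqref{eq:BB}. The paper's proof is just a terser version of exactly the computations you spell out.
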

\begin{proof}
From~\eqref{sur:N0} and~\eqref{def:b}, it holds
\begin{equation*}
|N_0|\lesssim a_1^2+\|w\|_\loc^2
\lesssim b_+^2 +b_-^2 +\|w\|_\loc^2.
\end{equation*}
Estimates~\eqref{eq:ble} and~\eqref{eq:bcle} then follow from~\eqref{eq:a}.
Last, estimate~\eqref{eq:BB} is a consequence of~\eqref{eq:bcle} taking $\delta_4>0$ small enough.
\end{proof}

Combining~\eqref{eq:prvirial} and~\eqref{eq:BB}, it holds
\begin{equation*}\dot{\mathcal B}-2\frac{C_4}{C_3} \dot {\mathcal H}
\geq \frac{\nu_0}2 (a_1^2+a_2^2) + C_4\|w\|_\loc^2 -4\frac {C_4}{C_3} |a_1|^3,
\end{equation*}
and thus, for possibly smaller $\delta>0$,
\begin{equation}\label{lastvirial}
\dot{\mathcal B}-2\frac{C_4}{C_3} \dot{\mathcal H}
\geq \frac{\nu_0}4 (a_1^2+a_2^2) + C_4\|w\|_\loc^2.
\end{equation}
By the choice of $A=\delta^{-1}$, the bound
$|\varphi_A|\lesssim A$, and \eqref{small}, we have for all $t\geq 0$,
\[
|\mathcal I| \lesssim A \|u_1\|_{H^1}\|u_2\|_{L^2}\lesssim \delta .
\]
Similarly, using also~\eqref{eq:SUgamma}, it holds
\[
|\mathcal J|\lesssim B\|v_1\|_{H^1}\|v_2\|_{L^2}\lesssim \delta \quad \mbox{and thus}
\quad |\mathcal H|\lesssim \delta.
\]
Estimate $|\mathcal B|\lesssim \delta^2$ is also clear from \eqref{small}.

Therefore, integrating estimate~\eqref{lastvirial} on $[0,t]$ and passing to the limit as 
$t\to +\infty$, it follows that
\begin{equation*}
\int_0^\infty \left[a_1^2+a_2^2 + \|w\|_\loc^2\right] dt \lesssim \delta.
\end{equation*}
Since $\int [(\partial_x u_1)^2 + u_1^2] \sech(x) \lesssim \|w\|_\loc^2$, this implies
\begin{equation}\label{integral}
\int_0^\infty \left\{a_1^2+a_2^2 + \int \left[(\partial_x u_1)^2 + u_1^2\right] \sech(x)\right\} dt \lesssim \delta.
\end{equation}

Using \eqref{integral}, we conclude the proof of Theorem~\ref{th:1} as in Section~5.2 of~\cite{KMM}.
Let
\[
\mathcal K=\int u_1 u_2 \sech(x)\quad \mbox{and}\quad
\mathcal G = \frac 12 \int \left[(\partial_x u_1)^2+u_1^2+u_2^2\right] \sech(x).
\]
Using \eqref{eq:u}, we have
\begin{align*}
\dot{\mathcal K} 
&= \int \left[\dot u_1 u_2+u_1\dot u_2\right] \sech(x)\\
&= \int \left[u_2^2 + u_1 (-L u_1+N^\perp)\right] \sech(x)\\
&= \int \left[u_2^2 - (\partial_x u_1)^2 - u_1^2\right]\sech(x)
+\frac 12 \int u_1^2 \sech''(x)
\\
&\quad +\int \left[ f(Q+a_1 Y_0+u_1)-f(Q)-a_1 f'(Q)Y_0-N_0 Y_0\right] u_1 \sech(x).
\end{align*}
We check that
\[
\left|
\int \left[ f(Q+a_1 Y_0+u_1)-f(Q)-a_1 f'(Q)Y_0-N_0 Y_0\right] u_1 \sech(x)\right|
\lesssim a_1^2 + \int u_1^2 \sech(x).
\]
(See \eqref{sur:N}-\eqref{sur:N0} in the proof of Lemma~\ref{le:2}.)
In particular, it follows that
\[
\int u_2^2 \sech(x)
\leq \dot{\mathcal K} + Ca_1^2+ C \int \left[ (\partial_x u_1)^2 + u_1^2\right]\sech(x).
\]
Using the bound $|\mathcal K|\lesssim \delta^2$ and \eqref{integral}, we deduce
\begin{equation}\label{integral2}
\int_0^\infty \left[ a_1^2+a_2^2 + \mathcal G \right] dt \lesssim \delta.
\end{equation}

Similarly, we check that
\begin{align*}
\dot{\mathcal G} 
=& \int \left[(\partial_x \dot u_1)(\partial_x u_1)+\dot u_1u_1+\dot u_2 u_2\right] \sech(x)\\
=&\int \left[(\partial_x u_2)(\partial_x u_1)+u_2 u_1+ (-L u_1+N^\perp) u_2\right] \sech(x)\\
=&-\int (\partial_x u_1) u_2 \sech'(x) \\
& +\int \left[ f(Q+a_1 Y_0+u_1)-f(Q)-a_1 f'(Q)Y_0-N_0 Y_0\right] u_2 \sech(x),
\end{align*}
and so, as before
\begin{equation}\label{integral3}
|\dot {\mathcal G}|\lesssim a_1^2 +\mathcal G.
\end{equation}

By \eqref{integral2}, there exists an increasing sequence $t_n\to +\infty$ such that 
\[
\lim_{n\to \infty}\left[ a_1^2(t_n)+a_2^2(t_n) + \mathcal G(t_n)\right]=0.
\]
For $t\geq 0$, integrating \eqref{integral3} on $[t,t_n]$, and passing to the limit as $n\to \infty$, we obtain
\[
\mathcal G(t)\lesssim 
\int_t^{\infty} \left[a_1^2 + \mathcal G\right] dt.
\]
By \eqref{integral2}, we deduce that $\lim_{t\to\infty} \mathcal G(t)=0$.

Finally, by \eqref{eq:a} and \eqref{sur:N0}, we have
\[
\left|\frac{d}{dt} (a_1^2) \right|+\left|\frac{d}{dt} (a_2^2) \right|\lesssim a_1^2+a_2^2+ \int u_1^2 \sech(x),
\]
and so as before, by integration on $[t,t_n]$ and $n\to \infty$,
\[
a_1^2(t)+a_2^2(t)
\lesssim 
\int_t^{\infty} \left[a_1^2 +a_2^2+ \mathcal G\right] dt,
\]
which proves $\lim_{t\to \infty} |a_1(t)|+|a_2(t)|=0$.

By the decomposition \eqref{eq:decomp}, this clearly implies~\eqref{th:1:asymp}.
The proof of Theorem~\ref{th:1} is complete.

\section{Proof of Theorem~\ref{th:2}}

\subsection{Conservation of energy}
 Using~\eqref{def:L} and~\eqref{eq:Y0} and performing a standard computation, we expand the conservation of energy~\eqref{eq:energy} for a solution $(\phi,\partial_t \phi)$ written under the form~\eqref{eq:decomp}
with the orthogonality conditions~\eqref{myortho}, to obtain
\begin{align*}
&2 \left\{E(\phi,\partial_t\phi)-E(Q,0)\right\}\\&\quad =
 \int \left\{(\partial_t \phi)^2 + (\partial_x \phi)^2 + \phi^2 - 2 F(\phi)\right\} -2E(Q,0)\\
 &\quad =a_2^2\nu_0^2\langle Y_0,Y_0\rangle 
+a_1^2 \langle L Y_0,Y_0\rangle +\|u_2\|_{L^2}^2+ \langle L u_1, u_1\rangle+O\left(|a_1|^3 + |a_2|^3 +\|u_1\|_{H^1}^3\right)\\
&\quad = \nu_0^2 (a_2^2-a_1^2) +\|u_2\|_{L^2}^2+ \langle L u_1, u_1\rangle+O\left(|a_1|^3 + |a_2|^3+ \|u_1\|_{H^1}^3\right).
\end{align*}
Using the notation~\eqref{def:b}, we have
\begin{equation}\label{energlin}
\begin{aligned}
2 \left\{E(\phi,\partial_t\phi)-E(Q,0)\right\}
 = &-4\nu_0 b_+b_-+\|u_2\|_{L^2}^2+ \langle L u_1, u_1\rangle
\\& +O\left(|b_+|^3 + |b_-|^3 + \|u_1\|_{H^1}^3\right).
\end{aligned}
\end{equation}
Let $\delta_0>0$ be defined by
\begin{equation*}
\delta_0^2 = b_+^2(0)+b_-^2(0)+\|u_1(0)\|_{H^1}^2+\|u_2(0)\|_{L^2}^2.
\end{equation*}
Then,~\eqref{energlin} applied at $t=0$ gives
$|2 \left\{E(\phi,\partial_t\phi)-E(Q,0)\right\}|\lesssim \delta_0^2$.
Thus, by conservation of energy, estimate~\eqref{energlin} at some $t>0$ gives
\begin{equation*}
\left|-4 \nu_0 b_+b_-+\|u_2\|_{L^2}^2+ \langle L u_1, u_1\rangle+O\left(|b_+|^3 + |b_-|^3 + \|u_1\|_{H^1}^3\right)\right|
\lesssim \delta_0^2.
\end{equation*}
Under the orthogonality conditions~\eqref{myortho}, the parity of $u_1$, from the spectral analysis recalled in the Introduction
(see~\cite{CGNT}), it follows that for some $\mu>0$,
\begin{equation}\label{coercivity}
\langle L u_1,u_1\rangle \geq \mu \|u_1\|_{H^1}^2.
\end{equation}
Thus, as long as $\|u_1\|_{H^1}+\|u_2\|_{L^2}+|b_+|+|b_-|\lesssim \delta_0^{1/2}$, the following energy estimate holds
\begin{equation}\label{for:u}
\|u_1\|_{H^1}^2+\|u_2\|_{L^2}^2 \lesssim |b_+|^2+|b_-|^2 + \delta_0^2.
\end{equation}

\subsection{Construction of the graph}
By  the energy estimate~\eqref{for:u}, Lemma~\ref{le:b} and a standard contradiction argument, we construct initial data leading to global solutions close to the ground state $Q$.

\smallskip

Let
$\ee=(\varepsilon_1,\varepsilon_2)\in \mathcal A_{0}$ (see~\eqref{def:A_delta}).
Then,
the condition $\langle \ee,\ZZp\rangle=0$ rewrites
\begin{equation*}
\langle \varepsilon_1,Y_0\rangle+\langle \varepsilon_2,\nu_0^{-1} Y_0\rangle=0.
\end{equation*}
Define $b_-(0)$ and $(u_1(0),u_2(0))$ such that
\begin{equation*}
b_-(0)=\langle \varepsilon_1,Y_0\rangle = - \langle \varepsilon_2,\nu_0^{-1} Y_0\rangle
\end{equation*} 
and
\begin{equation*}
\varepsilon_1=b_-(0) Y_0 + u_1(0),\quad \varepsilon_2=- b_-(0)\nu_0 Y_0 + u_2(0).
\end{equation*}
Then, it holds
\begin{equation*}
\langle u_1(0),Y_0\rangle= \langle u_2(0),Y_0\rangle=0.
\end{equation*}
This means that the initial data in the statement of Theorem~\ref{th:2} decomposes as (see~\eqref{decompb})
\begin{equation*}
\pp_0=\pp(0) = (Q,0)+(u_1,u_2)(0)+b_-(0)\YYm + h(\ee) \YYp.
\end{equation*}

Now, we prove that there exists at least a choice of $h(\ee)=b_+(0)$ such that the corresponding solution $\pp$ is global
and satisfies~\eqref{prop:1:stab}.

Let $\delta_0>0$ small enough and $K>1$ large enough to be chosen.
We introduce the following bootstrap estimates
\begin{align}
&\|u_1\|_{H^1}\leq K^2 \delta_0\quad\hbox{and}\quad \|u_2\|_{L^2}\leq K^2 \delta_0,\label{bs1}\\
&|b_-|\leq K \delta_0,\label{bs2}\\
&|b_+|\leq K^5 \delta_0^2 .\label{bs3}
\end{align}
Given any $(u_1(0),u_2(0))$ and $b_-(0)$ such that 
\begin{equation}\label{initial}
\|u_1(0)\|_{H^1}\leq \delta_0,\quad \|u_2(0)\|_{L^2}\leq \delta_0,\quad |b_-(0)|\leq \delta_0,
\end{equation}
and $b_+(0)$ satisfying
\begin{equation*}
|b_+(0)|\leq K^5\delta_0^2,
\end{equation*}
we define
\begin{equation*}
T=\sup\{\mbox{$t\geq 0$ such that~\eqref{bs1}-\eqref{bs2}-\eqref{bs3} hold on $[0,t]$}\}.
\end{equation*}
Note that since $K>1$, $T$ is well-defined in $[0,+\infty]$.
We aim at proving that there exists at least one value of 
$b_+(0)\in [-K^5\delta_0^2,K^5\delta_0^2]$ such that $T=\infty$.
We argue by contradiction, assuming that any $b_+(0)\in [-K^5\delta_0^2,K^5\delta_0^2]$ leads to $T<\infty$.

\smallskip

First, we strictly improve the estimate on $(u_1,u_2)$ in~\eqref{bs1}.
Indeed, by~\eqref{for:u} and~\eqref{bs2}-\eqref{bs3}, it holds
\begin{equation*}
\|u_1\|_{H^1}^2+\|u_2\|_{L^2}^2 \leq C_5( K^{10} \delta_0^4+K^2\delta_0^2+\delta_0^2),
\end{equation*}
for some constant $C_5>0$. Thus, under the constraints
\begin{equation}\label{constr1}
C_5K^{10}\delta_0^2 \leq \frac 14 K^4 ,\quad
C_5K^2\leq \frac 14 K^4, \quad C_5\leq \frac 14 K^4,
\end{equation}
it holds $\|u_1\|_{H^1}^2+\|u_2\|_{L^2}^2 \leq \frac 34 K^4 \delta_0^2$, which strictly improves~\eqref{bs1}.

\smallskip

Second, we use~\eqref{eq:bcle} to control $b_-$. By~\eqref{bs1}-\eqref{bs2}-\eqref{bs3}, 
since $\|w\|_\loc\lesssim \|u_1\|_{H^1}$, it holds
\begin{equation*}\left|\frac{d}{dt}\left(e^{2\nu_0 t} b_-^2\right)\right|\leq C_6\left(K^{15}\delta_0^6+K^6\delta_0^3\right)e^{2\nu_0 t},\end{equation*}
for some constant $C_6>0$. Thus, by integration on $[0,t]$ and using~\eqref{initial}, we obtain
\begin{equation*}
b_-^2\leq \frac{C_6}{2\nu_0}\left(K^{15}\delta_0^6+K^6\delta_0^3\right) + \delta_0^2.
\end{equation*}
Under the constaints
\begin{equation}\label{constr2}
\frac{C_6}{2\nu_0} K^{15}\delta_0^4\leq \frac 14 K^2,\quad
C_6 K^6\delta_0\leq \frac 14K^2,\quad
1\leq \frac 14 K^2,
\end{equation}
it holds $b_-^2\leq \frac 34 K^2 \delta_0^2$ which strictly improves~\eqref{bs2}.
\smallskip

By the previous estimates (under the constraints~\eqref{constr1}-\eqref{constr2})
and a continuity argument,
we see that if $T<+\infty$, then $|b_+(T)|=K^5\delta_0^2$.

\smallskip

Third, we observe that if $t\in [0,T]$ is such that $|b_+(t)|=K^5\delta_0^2$, then it follows
from~\eqref{eq:ble} that
\begin{align*}
\frac{d}{dt}(b_+^2)
&\geq 2\nu_0 b_+^2 -2C_4|b_+| (b_+^2+b_-^2+\|w\|_\loc^2)\\
&\geq 2\nu_0K^{10}\delta_0^4 - C_7K^{5}\delta_0^2\left(K^{10}\delta_0^4+K^4\delta_0^2\right),
\end{align*}
for some constant $C_7>0$.
Under the constraints
\begin{equation}\label{constr3}
C_7K^{15}\delta_0^2\leq \frac 12 \nu_0K^{10},\quad
C_7K^{9} \leq \frac 12 \nu_0K^{10},
\end{equation}
the inequality
\begin{equation*}
\frac{d}{dt} (b_+^2)\geq \nu_0 K^{10}\delta_0^4>0,
\end{equation*}
holds.
By standard arguments, such transversality condition implies that $T$ is the first time for which $|b_+(t)|=K^5\delta_0^2$
and moreover that $T$ is continuous in the variable $b_+(0)$
(see e.g. \cite{CMM,CM} for a similar argument).
Now, the image of the continuous map
\begin{equation*}
b_+(0)\in [-K^5\delta_0^2,K^5\delta_0^2]\mapsto b_+(T)\in \{-K^5\delta_0^2,K^5\delta_0^2\}
\end{equation*}
is exactly $\{-K^5\delta_0^2,K^5\delta_0^2\}$ (since the image of $-K^5\delta_0^2$ is $-K^5\delta_0^2$
and the image of $K^5\delta_0^2$ is $K^5\delta_0^2$), which is a contradiction.

As a consequence, provided the constraints in~\eqref{constr1}-\eqref{constr2}-\eqref{constr3} are all fullfilled, 
there exists at least one value of $b_+(0)\in (-K^5\delta_0^2,K^5\delta_0^2)$ such that $T=\infty$.

Finally, we easily see that to satisfy~\eqref{constr1}-\eqref{constr2}-\eqref{constr3}, it is sufficient first to fix
$K>0$ large enough, depending only on $C_5$, $C_6$ and $C_7$, and then to choose $\delta_0>0$ small enough.

\subsection{Uniqueness and Lipschitz regularity}
The following proposition implies both the uniqueness of the choice of $h(\ee)=b_+(0)$, for a given $\ee\in \mathcal A_0$, and the Lipschitz regularity of the graph $\mathcal M$ defined from the resulting map $\ee\in \mathcal A_0\mapsto h(\ee)$.
It is thus sufficient to complete the proof of Theorem~\ref{th:2}.
\begin{proposition}
There exist $C, \delta>0$ such 
if $\pp$ and $\tilde \pp$ are two even solutions of~\eqref{nlkg} satisfying
 \begin{equation}\label{preuve:stab}
\mbox{for all $t\geq 0$,}\quad
 \|\pp(t)-(Q,0)\|_{H^1 \times L^2 }< \delta,\quad
 \|\tilde \pp(t)-(Q,0)\|_{H^1 \times L^2 }< \delta
\end{equation}
then, decomposing
\begin{equation*}
\pp(0)=(Q,0)+\ee+b_+(0) \YYp,\quad
\tilde\pp(0)=(Q,0)+\tilde\ee+\tilde b_+(0) \YYp 
\end{equation*}
with $\langle \ee, \ZZp\rangle = \langle \tilde \ee,\ZZp\rangle=0$, it holds
\begin{equation}\label{dolla}
|b_+(0)-\tilde b_+(0)|\leq C \delta^{\frac 12} \|\ee-\tilde \ee\|_{H^1\times L^2}.
\end{equation}
\end{proposition}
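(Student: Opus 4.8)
The plan is to pass to the difference of the two solutions and to exploit the exponential dichotomy carried by the mode $b_+-\tilde b_+$. Decomposing both $\pp$ and $\tilde\pp$ as in Section~\ref{S:2}, I would set $W_i=u_i-\tilde u_i$, $a_i^W=a_i-\tilde a_i$, $\beta_\pm=b_\pm-\tilde b_\pm$. Subtracting the systems \eqref{eq:a} and \eqref{eq:u} for the two solutions gives
\begin{equation*}
\dot W_1=W_2,\qquad \dot W_2=-LW_1+(N^\perp-\tilde N^\perp),\qquad
\dot\beta_\pm=\pm\nu_0\beta_\pm\pm\tfrac1{2\nu_0}(N_0-\tilde N_0).
\end{equation*}
Since $N$ vanishes to second order at the origin, \eqref{preuve:stab} together with Taylor expansions, decay of $Q$ and $Y_0$, and Cauchy--Schwarz (mirroring \eqref{sur:N}--\eqref{sur:N0}) yield the Lipschitz-type bound $\|N^\perp-\tilde N^\perp\|_{L^2}+|N_0-\tilde N_0|\lesssim\delta\bigl(\|W_1\|_\loc+|a_1^W|\bigr)$. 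Writing $\eta^2:=\|\ee-\tilde\ee\|_{H^1\times L^2}^2\simeq\|W_1(0)\|_{H^1}^2+\|W_2(0)\|_{L^2}^2+\beta_-(0)^2$ and $\pi_0:=|\beta_+(0)|$, the claim \eqref{dolla} amounts to $\pi_0\lesssim\delta^{1/2}\eta$.

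The first real step is to isolate the unstable mode. By \eqref{preuve:stab} and \eqref{small}, $|\beta_+(t)|\lesssim\delta$ for all $t$, so $e^{-\nu_0 t}\beta_+(t)\to0$; integrating the $\beta_+$-equation backward from $+\infty$ gives the representation $\beta_+(0)=-\tfrac1{2\nu_0}\int_0^\infty e^{-\nu_0 s}(N_0-\tilde N_0)(s)\,ds$, whence
\begin{equation*}
\pi_0\lesssim\delta\int_0^\infty e^{-\nu_0 s}\bigl(\|W_1(s)\|_\loc+|a_1^W(s)|\bigr)\,ds .
\end{equation*}
So the problem reduces to a spacetime (local-in-space) estimate on the difference, which I would obtain by rerunning the virial analysis of Sections~3--5 on the $W$-quantities: the difference system has exactly the structure of \eqref{eq:a}--\eqref{eq:u}, with remainders that are $O(\delta)$ times the difference — strictly more favourable than in the original problem — so the functionals $\mathcal I,\mathcal J,\mathcal H,\mathcal B$ and Propositions~\ref{pr:virialu}, \ref{virielJ} carry over, producing the analogue of \eqref{lastvirial}, namely $\dot{\mathcal B}^W-c\,\dot{\mathcal H}^W\geq c_0\bigl(a_1^W{}^2+a_2^W{}^2+\|w^W\|_\loc^2\bigr)$ for $\delta$ small. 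Integrating this on $[0,\infty)$, using Theorem~\ref{th:1} applied separately to $\pp$ and $\tilde\pp$ (so that all difference quantities decay locally and $\mathcal B^W(t)\to0$) and controlling the boundary contribution of $\mathcal H^W$ at $+\infty$ as at the end of Section~\ref{S:5}, one bounds $\int_0^\infty\bigl(a_1^W{}^2+a_2^W{}^2+\|w^W\|_\loc^2\bigr)\,dt$ in terms of $\eta^2$, $\pi_0^2$ and a small residual; feeding this back into the displayed bound for $\pi_0$ and absorbing the $\delta\pi_0$ term yields $\pi_0\lesssim\delta^{1/2}\eta$, which is \eqref{dolla}. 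Taking $\ee=\tilde\ee$ forces $\pi_0=0$, giving the uniqueness of $h$.

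The main obstacle is that, unlike in Theorem~\ref{th:1}, the difference $W$ is \emph{not} a priori quantitatively small in the energy space relative to $\eta+\pi_0$ — only $O(\delta)$, by \eqref{small} — so the virial functionals $\mathcal I^W,\mathcal J^W$ and the term at $t=+\infty$ cannot be estimated ``for free'', and one must control $\|W(t)\|_{H^1\times L^2}$ simultaneously with $\beta_\pm(t)$. To do this I would combine the virial coercivity above with the almost-conservation of the difference energy $\|W_2\|_{L^2}^2+\langle LW_1,W_1\rangle-4\nu_0\beta_+\beta_-$ (the discarded terms being cubic, i.e.\ $O(\delta)$ times quadratic, cf.\ \eqref{energlin}), the coercivity \eqref{coercivity}, and a continuity/bootstrap argument on the triple $\bigl(\|W\|_{H^1\times L^2},|\beta_-|,|\beta_+|\bigr)$ of exactly the type used to build $\mathcal M$ — the instability of $\beta_+$ being what makes the bootstrap close. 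The delicate point inside this bootstrap is to reconcile the possible exponential growth of the unstable mode with its global boundedness: one splits the time integral at the scale where $(\eta+\pi_0)e^{\nu_0 t}\sim\delta$, bounding the difference by $(\eta+\pi_0)e^{\nu_0 t}$ before that time and by $\delta$ afterwards, which is where the gain of the factor $\delta^{1/2}$ ultimately comes from.
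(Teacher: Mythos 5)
Your setup is the right one (difference system, the Lipschitz bound $|\check N_0|+\|\check N^\perp\|_{L^2}\lesssim \delta(|\check b_\pm|+\|\check u_1\|_{H^1})$, and the backward integration of the unstable mode), but from there you take a much harder road than necessary, and the hard road has a genuine hole exactly where you flag it. Your plan requires the spacetime bound $\int_0^\infty(\|W_1\|_\loc^2+|a_1^W|^2)\,dt\lesssim(\eta+\pi_0)^2$, obtained by rerunning the virial machinery on the difference. But whenever $\pi_0\neq 0$ the unstable component of the difference grows until it saturates at size $\delta$, so on an interval of length $\sim\nu_0^{-1}\log(\delta/(\eta+\pi_0))$ the difference is of size $\delta$, not $\eta+\pi_0$; the boundary values $\mathcal I^W,\mathcal J^W,\mathcal B^W$ and the a priori control of $\|W(t)\|_{H^1\times L^2}$ entering the difference virial estimates are then only $O(\delta^2)$, and your ``split the time integral'' repair loses at least the factor $\log(\delta/(\eta+\pi_0))$ (and a Gronwall factor $(\delta/(\eta+\pi_0))^{C\delta}$ from the almost-conservation of the difference energy), which is unbounded as $\eta+\pi_0\to 0$. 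As written this does not yield a uniform Lipschitz constant, and it also never explains where the exponent $\tfrac12$ in \eqref{dolla} should come from.

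The paper's proof needs none of this: no virial functional for the difference, no use of Theorem~\ref{th:1}, and no spacetime integrability. Set $\beta_+=\check b_+^2$, $\beta_-=\check b_-^2$, $\beta_c=\langle L\check u_1,\check u_1\rangle+\|\check u_2\|_{L^2}^2$; the Lipschitz bound on $\check N$ together with \eqref{coercivity} gives the closed differential inequalities $|\dot\beta_c|+|\dot\beta_+-2\nu_0\beta_+|+|\dot\beta_-+2\nu_0\beta_-|\leq K\delta(\beta_c+\beta_++\beta_-)$. One then argues by contradiction: if $K\delta(\beta_c(0)+\beta_+(0)+\beta_-(0))<\tfrac{\nu_0}{10}\beta_+(0)$, a bootstrap on the condition $K\delta(\beta_c+\beta_++\beta_-)\leq\nu_0\beta_+$ closes for all time (the monotone growth $\dot\beta_+\geq\nu_0\beta_+$ is precisely what lets you dominate $\beta_c(t)$ and $\beta_-(t)$ by $\beta_c(0)+\beta_+(t)$ and $\beta_-(0)+\beta_+(t)$), forcing $\beta_+(t)\to\infty$ and contradicting $|\check b_+|\lesssim\delta$. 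Hence $\check b_+(0)^2=\beta_+(0)\lesssim\delta(\beta_c(0)+\beta_-(0))\lesssim\delta\|\ee-\tilde\ee\|_{H^1\times L^2}^2$, and the square root of this quadratic inequality is exactly where $\delta^{1/2}$ comes from. I would encourage you to keep your first two displayed reductions and replace everything after ``rerunning the virial analysis'' by this finite-dimensional dichotomy argument.
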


\begin{proof}
We use the decomposition and the notation of Section~\ref{S:2} for the two solutions $\pp$ and $\tilde \pp$
satisfying~\eqref{preuve:stab}.
In particular, from \eqref{small}, there exists $C_0>0$ such that for all $t\geq 0$,
\begin{equation}\label{estchec}
\|u_1(t)\|_{H^1}+\|\tilde u_1(t)\|_{H^1}+\|u_2(t)\|_{L^2}+\|\tilde u_2(t)\|_{L^2}
+|b_\pm(t)|+\tilde b_\pm(t)|\leq C_0 \delta.
\end{equation}
We denote
\begin{align*}
&\check a_1=a_1-\tilde a_1,\quad \check a_2=a_2-\tilde a_2, \quad
\check b_+=b_+-\tilde b_+,\quad \check b_-=b_--\tilde b_-,\\
&\check u_1=u_1-\tilde u_1, \quad \check u_2=u_2-\tilde u_2, \quad
\check N=N-\tilde N,\quad \check N^\perp=N^\perp-\tilde N^\perp,
\quad \check N_0=N_0-\tilde N_0.
\end{align*}
Then, from \eqref{eq:a}, \eqref{eq:u}, the equations of $(\check u_1,\check u_2,\check b_+,\check b_-)$ write
\begin{equation}\label{eq:ccb}\left\{
\begin{aligned}
& \dot {\check b}_+ = \nu_0 \check b_+ + \frac{\check N_0}{2\nu_0} \\
& \dot {\check b}_- = -\nu_0 \check b_- - \frac{\check N_0}{2\nu_0}
\end{aligned}
\right.\qquad \mbox{and}\qquad\left\{
\begin{aligned}
& \dot {\check u}_1 = \check u_2 \\
& \dot {\check u}_2 =- L \check u_1 + \check N^\perp.
\end{aligned}
\right.\end{equation}
We claim that
\begin{equation}\label{taylor}
|\check N_0|+\|\check N^\perp\|_{L^2}\leq C \delta \left( |\check b_+|+|\check b_-|+\|\check u_1\|_{H^1}\right).
\end{equation}
Indeed, by Taylor formula, for any $v,\tilde v$, it holds (recall that $\alpha>1$)
\begin{align*}
&\left| f(Q+v)-f(Q)-f'(Q)v - \left[ f(Q+\tilde v)-f(Q)-f'(Q)\tilde v\right]\right|\\
&\quad \lesssim |v-\tilde v| \left(|v|+|\tilde v|\right)\left( Q^{2\alpha-1}+|v|^{2\alpha-1}+|\tilde v|^{2\alpha-1}\right)\lesssim |v-\tilde v| \left(|v|+|\tilde v|\right).
\end{align*}
Using this inequality for $\check N= N-\tilde N$, where $N$ is defined in~\eqref{def:N}, and~\eqref{estchec}, we obtain
\begin{equation*}
|\check N|\lesssim \left(|\check a_1| Y_0 + |\check u_1|\right)
\left( Y_0|a_1|+ Y_0 |\tilde a_1|+|u_1|+|\tilde u_1|\right).
\end{equation*}
Using the Cauchy-Schwarz inequality and again~\eqref{estchec}, we find
$\|\check N\|_{L^2} \lesssim \delta (|\check a_1| + |\check u_1|)$
and estimate~\eqref{taylor} follows.

\smallskip

Let
\begin{equation*}
\beta_+=\check b_+^2,\quad \beta_-=\check b_-^2,\quad
\beta_c=\langle L \check u_1,\check u_1\rangle+\langle \check u_2,\check u_2\rangle.
\end{equation*}
By~\eqref{eq:ccb} and~\eqref{taylor} (and the coercivity property~\eqref{coercivity} for $\check u_1$) we have, for some $K>0$,
\begin{equation}\label{ode}
 |\dot \beta_c |+ |\dot \beta_+ -2\nu_0 \beta_+ |+ |\dot \beta_- + 2\nu_0 \beta_- |
\leq K \delta \left(\beta_c+\beta_++\beta_-\right).
\end{equation}

For the sake of contradiction, assume that the following holds
\begin{equation}\label{contra}
0\leq K \delta\left(\beta_c(0)+\beta_+(0)+\beta_-(0)\right) < \frac {\nu_0}{10} \beta_+(0).
\end{equation}
We introduce the following boostrap estimate
\begin{equation}\label{BSS}
K \delta\left(\beta_c +\beta_+ +\beta_- \right) \leq \nu_0 \beta_+ .
\end{equation}
Define
\begin{equation*}
T =\sup\{ \mbox{$t>0$ such that~\eqref{BSS} holds}\}>0.
\end{equation*}
We work on the interval $[0,T]$.
Note that from~\eqref{ode} and~\eqref{BSS}, it holds
\begin{equation}\label{croiss}
\dot \beta_+ \geq 2\nu_0 \beta_+ - K \delta \left(\beta_c+\beta_++\beta_-\right)
\geq \nu_0 \beta_+.
\end{equation}
In particular, by standard arguments, $\beta_+$ is positive and increasing on $[0,T]$.

Next, by~\eqref{ode} and~\eqref{BSS},
\begin{equation*}
\dot \beta_c \leq \nu_0 \beta_+ \leq \dot \beta_+ 
\end{equation*}
and thus, by integration,
\begin{equation*}
\beta_c(t)\leq \beta_c(0)+\beta_+(t)-\beta_+(0)\leq \beta_c(0)+\beta_+(t).
\end{equation*}
Therefore, by \eqref{contra}, for $\delta$ small enough,
\begin{equation*}
K \delta \beta_c(t) \leq K \delta ( \beta_c(0)+\beta_+(t))
\leq \frac {\nu_0}{10} \beta_+(0) + K \delta \beta_+(t)
\leq \frac {\nu_0}5\beta_+(t).
\end{equation*}

Then, by~\eqref{ode} and~\eqref{BSS},
\begin{equation*}
\dot \beta_-\leq -2\nu_0 \beta_- +\nu_0 \beta_+,
\end{equation*}
and so by integration and~\eqref{contra},
\begin{equation*}
\beta_-(t) \leq e^{-2\nu_0t} \beta_-(0)+\nu_0 \beta_+(t) e^{-2\nu_0t} \int_0^t e^{2\nu_0 s} ds
\leq \beta_-(0)+\frac 12 \beta_+(t).
\end{equation*}
Therefore,
for $\delta$ small enough,
\begin{equation*}
K \delta \beta_-(t) \leq K \delta ( \beta_-(0)+\beta_+(t))
\leq \frac {\nu_0}{10} \beta_+(0) + K \delta \beta_+(t)
\leq \frac {\nu_0}5\beta_+(t).
\end{equation*}
Last, it is clear that for $\delta$ small, it holds $K \delta\beta_+\leq \frac {\nu_0}5\beta_+$.

Therefore, 
we have proved that, for all $t\in [0,T]$,
\begin{equation*}
K \delta\left(\beta_c(t)+\beta_+(t)+\beta_-(t)\right) \leq \frac 35 \nu_0 \beta_+(t).
\end{equation*}
By a continuity argument, this means that $T=+\infty$.
By the exponential growth~\eqref{croiss} and $\beta_+(0)>0$, we obtain a contradiction with the global bound~\eqref{estchec} on $|b_+|$.

Since estimate~\eqref{contra} is contradicted,
and since it holds
\begin{equation*}
\ee= \uu(0)+b_-(0)\YYm,\quad \tilde \ee= \tilde \uu(0)+\tilde b_-(0)\YYm
\quad \mbox{with}\quad \langle\uu(0),\YYm\rangle=\langle\tilde \uu(0),\YYm\rangle=0,
\end{equation*} 
we have proved \eqref{dolla}.
\end{proof}

\end{document}